\newcommand{\R}{\ensuremath{\mathbb{R}}}
\newcommand{\Z}{\ensuremath{\mathbb{Z}}}
\newcommand{\Hil}{\ensuremath{\mathcal{H}}}
\newcommand{\V}{\ensuremath{\mathcal{V}}}
\newcommand{\A}{\ensuremath{\mathcal{A}}}
\newcommand{\eps}{\ensuremath{\varepsilon}}
\renewcommand{\phi}{\ensuremath{\varphi}}
\providecommand{\abs}[1]{\lvert#1\rvert}
\providecommand{\norm}[1]{\lVert#1\rVert}
\providecommand{\Norm}[1]{\left\lVert#1\right\rVert}
\newcommand{\HE}{E}    
\newcommand{\Henry}{H} 
\newcommand{\degK}{{}^\circ\hspace*{-0.1em}K}
\newcommand{\degC}{{}^\circ\hspace*{-0.1em}C}
\newcommand{\tend}{{t_m}}
\newcommand{\Yone}{Y^{1}}
\newcommand{\Ytwo}{Y^{2}}
\newcommand{\Omeps}{\Omega_{\eps}}
\newcommand{\Gameps}{\Gamma_{\eps}}
\newcommand{\Taeps}{T_{\alpha,\eps}}
\newcommand{\Toneeps}{T_{1,\eps}}
\newcommand{\Ttwoeps}{T_{2,\eps}}
\newcommand{\HEaeps}{\HE_{\alpha,\eps}}
\newcommand{\HEoneeps}{\HE_{1,\eps}}
\newcommand{\HEtwoeps}{\HE_{2,\eps}}
\newcommand{\Omoneeps}{\Omega_{\eps}^{1}}
\newcommand{\Omtwoeps}{\Omega_{\eps}^{2}}
\newcommand{\half}{\frac{1}{2}}
\newcommand{\bigC}{C}
\newcommand{\Tout}{T_{a}}
\newcommand{\Tcrit}{T_{c}}
\newcommand{\defeq}{:=}
\newcommand{\leavethisout}[1]{}
\newcommand{\normvec}{\mathbf{n}}
\newcommand{\ds}{\displaystyle}
\newcommand{\gasR}{{\cal R}}
\newcommand{\Mmicro}{m} 
\newcommand{\Mmacro}{M} 
\newcommand{\Rtree}{R_{\text{\it tree}}}
\newcommand{\Rf}{R^f}
\newcommand{\Lf}{L^f}
\newcommand{\Vf}{V^f}
\newcommand{\Lv}{L^v}
\newcommand{\Vv}{V^v}
\newcommand{\eqdef}{\mathrel{\mathop=}:}
\newcommand{\reptheorem}[2]{%
  {\renewcommand{\thetheorem}{\ref{#1}}
    \begin{theorem}
      #2
    \end{theorem}
    \addtocounter{theorem}{-1}
  }} 
\newcommand{\replemma}[2]{%
  {\renewcommand{\thetheorem}{\ref{#1}}
    \begin{lemma}
      #2
    \end{lemma}
    \addtocounter{theorem}{-1}
  }} 
\newenvironment{propShowalter}{%
  \par{\scshape Showalter's Proposition (Prop.~4.1 in~\cite{PDE5})}\itshape}{}
\newcounter{question}
\title{
 Homogenization of the Stefan problem, \\
  with application to maple sap exudation\footnotemark[1]}
\author{Isabell Graf\footnotemark[2]\ \footnotemark[3]
  \and John M. Stockie\footnotemark[3]}
\begin{document}

\maketitle
\slugger{mms}{xxxx}{xx}{x}{x--x}

\renewcommand{\thefootnote}{\fnsymbol{footnote}} 

\footnotetext[0]{Latest revision: \today.}

\footnotetext[1]{This work was supported in part by a Discovery Grant
  from the Natural Sciences and Engineering Research Council of Canada.}

\footnotetext[2]{Corresponding author.  IG was supported by a Feodor
  Lynen Fellowship from the Alexander von Humboldt Foundation.}

\footnotetext[3]{Department of Mathematics, Simon Fraser University,
  8888 University Drive, Burnaby, BC, V5A 1S6, Canada
  (\email{grafisab@gmail.com}, \email{stockie@math.sfu.ca}).}

\renewcommand{\thefootnote}{\arabic{footnote}}

\begin{abstract}
  The technique of periodic homogenization with two-scale convergence is
  applied to the analysis of a two-phase Stefan-type problem that arises
  in the study of a periodic array of melting ice bars.  For this
  ``reduced model'' we prove results on existence, uniqueness and
  convergence of the two-scale limit solution in the weak form, which
  requires solving a macroscale problem for the global temperature field
  and a reference cell problem at each point in space which captures the
  underlying phase change process occurring on the microscale.  We state
  a corresponding strong formulation of the limit problem and use it to
  design an efficient numerical solution algorithm.  The same
  homogenized temperature equations are then applied to solve a much
  more complicated problem involving multi-phase flow and heat transport
  in trees, where the sap is present in both frozen and liquid forms and
  a third gas phase is also present.  Our homogenization approach has
  the advantage that the global temperature field is a solution of the
  same reduced model equations, while all the remaining physics are
  relegated to the reference cell problem.  Numerical simulations are
  performed to validate our results and draw conclusions regarding the
  phenomenon known as sap exudation, which is of great importance in
  sugar maple trees and few other related species.
\end{abstract}

\begin{keywords}
  periodic homogenization, two-scale convergence, Stefan problem,
  multiphase flow, phase change, sap exudation
\end{keywords}

\begin{AMS}
  35B27,\ 
  35R37,\ 
  76T30,\ 
  80A22,\ 
  92C80   
\end{AMS}

\pagestyle{myheadings}
\thispagestyle{plain}
\markboth{I. GRAF AND J. M. STOCKIE}{HOMOGENIZATION OF THE STEFAN PROBLEM}

\section{Introduction}
\label{sec:intro}

Multiscale problems are characterized by geometric, material or other
features that exhibit variations on spatial or temporal scales that
differ widely.  Many mathematical and numerical methods have been
derived for dealing with this scale separation and accurately capturing
the interactions between phenomena operating on different scales.  For
problems that have a periodic microstructure, one such
mathematical approach is periodic homogenization, and more specifically
the approach known as two-scale asymptotic convergence~\cite{TWOSCALE1}.

In this paper, we are interested in applying two-scale convergence to
analyze solutions of a Stefan-type problem that arises from multi-phase
flow in a porous medium coupled with heat transport.  In particular, we
are motivated by the study of sap flow in a maple tree that is subject
to both thawing and freezing~\cite{AHORN5}.  We seek insight into the
phenomenon called \emph{sap exudation}, which refers to the generation
of elevated sap pressure within the maple tree trunk in advance of the
spring thaw when the tree is still in a leafless state and no
photosynthesis (or transpiration) occurs to drive the sap flow.  Our
work is based on the model derived in~\cite{AHORN5} that captures the
physical processes going on at the microscale (that is, at the level of
individual wood cells) and includes multiphase flow of gas/ice/water,
heat transport, porous flow through cell walls, and osmotic effects.
There is an inherent repeating structure in wood at the cellular scale
that lends itself naturally to a homogenization approach, although with
the exception of the analysis of \cite{chavarriakrauser-ptashnyk-2013}
for water transport in plant tissues, the application of homogenization
techniques in the context of heat or sap flow in trees is very limited.

Before tackling the sap exudation problem in its full complexity, it is
convenient to first develop our homogenization approach in the context
of a simpler ``reduced model'' that focuses only on heat transport and
ice/water phase change for a similarly fine-structured domain.  To this
end, we consider in Section~\ref{sec:problem1} the transport of heat in
a region containing a periodic array of small cylindrical ice bars
immersed in water. The temperature field obeys the heat equation, and is
coupled with a Stefan condition at the ice/water phase interface that
governs the transition from solid to liquid and vice versa.  Many
different approaches have been developed to analyze such phase
transitions, and these are well-described in the monograph
\cite{STEFAN5}.  To handle the multiplicity of the ice bars, we apply
the technique of periodic homogenization with two-scale convergence
established in \cite{TWOSCALE1, TWOSCALE2}. Several authors have
previously applied homogenization to solving the Stefan problem, such as
in \cite{bossavit-damlamian-1981, STEFAN1, STEFAN2} where the phase
change boundary is handled by separately homogenizing an auxiliary
problem. In \cite{STEFAN3} on the other hand, an additional function
$\theta$ is introduced for an aggregate state that diffuses on a slow
time scale and with which all microscopic phase changes are properly
captured.

The approach we employ in this paper has the advantage that it applies
homogenization techniques in a straightforward manner in order to obtain
an uncomplicated limit model, the simplicity of which ensures that
numerical simulations are relatively easy to perform.  More
specifically, we define a reference cell $Y$ that is divided into two
sub-regions: $\Yone$ where the temperature diffuses rapidly; and $\Ytwo$
where we define a second temperature field that diffuses slowly. One
particular challenge that arises in the study of Stefan problems is that
the diffusion coefficient depends on the underlying phases, so that heat
diffuses differently in water or ice.  Consequently, the diffusion
coefficient depends on temperature (or equivalently on enthalpy) so that
the governing differential equation is only quasi-linear.

The details of our periodic homogenization approach are introduced in
the context of the reduced model in Section~\ref{sec:description}.  We
prove the existence of a solution of the governing PDE in
Section~\ref{sec:existence} and derive {a~priori} estimates of the
solution in Section~\ref{sec:a-priori}.  The identification of the
two-scale limit equations is performed in Section~\ref{sec:two-scale},
and uniqueness of the limit problem is proven in
Section~\ref{sec:unique}.  Finally, we provide the strong formulation
and one-phase formulation of the limit problem in
Section~\ref{sec:strong_form}.  Detailed proofs of the lemmas and
theorems involved are relegated to the Appendices.  We conclude our
study of the reduced model in Section~\ref{sec:sims-reduced} with
numerical simulations that illustrate the behaviour of the homogenized
solution.

The remainder of the paper focuses on the application of our periodic
homogenization approach to the maple sap exudation problem.  We aim to
extend the homogenized equation derived earlier for the reduced model to
obtain a corresponding homogenized equation that governs
freezing/thawing in maple sap, and this process is carefully justified
in the introduction to Section~\ref{sec:maple_sap_exudation}.  The
governing equations for the sap exudation problem on the cellular scale
are a slightly modified version of the model developed in~\cite{AHORN5}
which we summarize in Section~\ref{sec:maple_description}.  The major
advantage of our approach is that the homogenized equations for the
two-phase reduced problem can also be used to determine the temperature
in the more complicated sap exudation problem, with the added complexity
of the extra gas phase, dynamic phase interfaces, pressure exchange,
etc.\ all confined to the reference cell.  Guided by our experience with
the reduced model, we develop a corresponding numerical algorithm for
solving the homogenized sap exudation model, and then present
simulations in Section~\ref{sec:maple_simul} that we compare with
results from the reduced model.  Finally, we conclude in
Section~\ref{sec:discussion} with a discussion of the biological
significance of our results in the context of tree sap flow, and we
suggest several exciting avenues for future work that exploit our
homogenized sap exudation model.

\section{Reduced model: Ice bars melting in water}
\label{sec:problem1}

\subsection{Mathematical formulation}
\label{sec:description}

Let $\Omega\subset \R^n$ be a bounded domain having Lipschitz boundary
and that contains both water and ice, where the ice takes the form of
``bars'' occurring in a regular repeating pattern such as that shown in
Figure~\ref{fig:ref-cell-a}.  Let $Y = [0,\delta]^n$ be a
\emph{reference cell} that captures the configuration of the periodic
microstructure, and for which $\delta$ represents its physical size and
$0\leq\delta\ll 1$.  The reference cell is divided into two sub-domains
$\Yone$ and $\Ytwo$ that are separated by a Lipschitz boundary
$\Gamma=\Yone\cap \Ytwo$.  The primary feature that we exploit in our
homogenization approach is that within $\Yone$ heat diffuses rapidly,
whereas in $\Ytwo$ there is instead a relatively slow diffusion of heat.
We take $\Gamma$ to be a circle of radius $\gamma$ that satisfies
$0<\gamma<\delta$.  A picture illustrating this decomposition of the
reference cell is given in Figure~\ref{fig:ref-cell-b}.

We next introduce a small parameter $0<\eps\ll 1$ that corresponds to
the size of the periodic microstructure.  The domain $\Omega$ may then
be decomposed into three $\eps$-dependent sub-domains: $\Omoneeps\defeq
\bigcup_{k\in\Z^n}\eps(k+\Yone)\cap \Omega$ (which is connected), and
two disconnected components consisting of the region $\Omtwoeps \defeq
\bigcup_{k\in\Z^n}\eps(k+\Ytwo)\cap\Omega$ and the boundary curves
$\Gameps \defeq \bigcup_{k\in\Z^n} \eps(k+\Gamma)\cap\Omega$.  This
decomposition is highlighted in Figure~\ref{fig:ref-cell-a}.
\begin{figure}
  \centering
  \subfigure[Periodically-tiled domain
  $\Omega$.]{\includegraphics[width=0.5\textwidth]{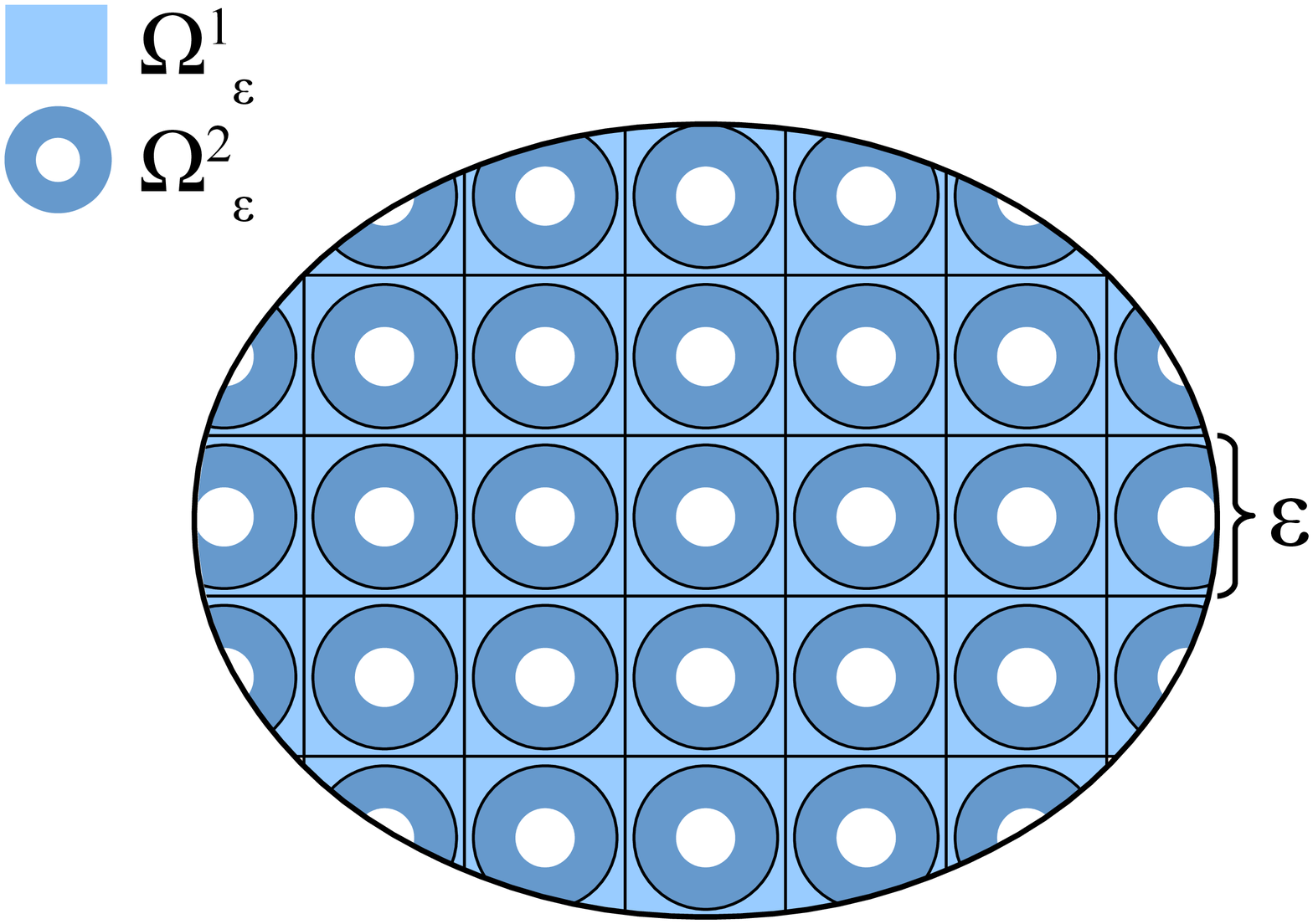}\label{fig:ref-cell-a}} 
  \subfigure[Reference cell
  $Y$.]{\raisebox{1cm}{\includegraphics[width=0.35\textwidth]{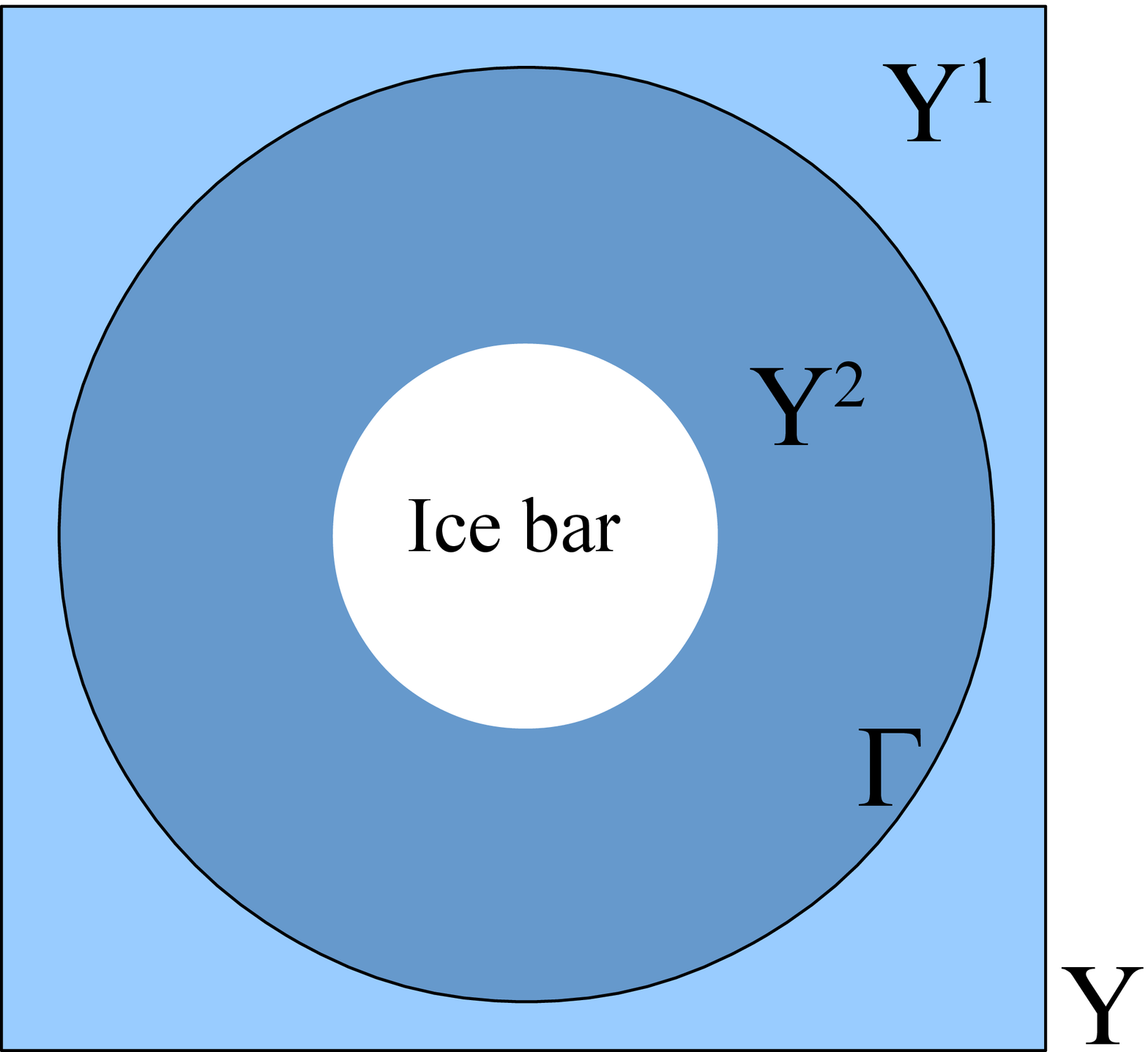}}\label{fig:ref-cell-b}} 
  \caption{Periodic microstructure of the reduced model with ice bars
    immersed in water, illustrating the decomposition into fast ($\Yone$
    and $\Omoneeps$) and slow ($\Ytwo$ and $\Omtwoeps$) diffusing
    regions.}
  \label{fig:ref-cell}
\end{figure}

Throughout the theoretical derivations in this paper, we employ what is
known as the two-phase formulation of the Stefan problem in which the
heat diffusion equation does not involve the time rate of change of
temperature but rather that of the specific enthalpy $\HE$, which
measures the heat energy stored in the water or ice per unit mass and
has units of $J/kg$.  Assuming that the material properties for water
and ice remain constant in their respective phase, the temperature $T$ (in
$\degK$) can be written as a piecewise linear function of
enthalpy~\cite{STEFAN5}
\begin{gather*}
  T = \widetilde{\omega}(\HE) = 
  \left\{\begin{array}{cl}
      \ds\frac{1}{c_i} \HE, & \qquad \text{if $\HE < \HE_i$}, \\[0.3cm]
      \Tcrit,               & \qquad \text{if $\HE_i \leq \HE < \HE_w$},\\[0.1cm]
      \ds\Tcrit + \frac{1}{c_w}(\HE-\HE_w),& \qquad \text{if $\HE_w \leq \HE$},
    \end{array}\right.
\end{gather*}
where $\Tcrit=273.15\degK$ is the melting/freezing point of water, $c$
denotes specific heat ($J/kg\, \degK$), and $\HE$ denotes the enthalpy
($J/kg$) at $T = \Tcrit$.  The subscripts $i$ and $w$ refer to the ice
and water phases, respectively.  A distinguishing feature of this
temperature-enthalpy relationship is that when the temperature is equal
to the melting point, the enthalpy varies while the temperature remains
constant -- this behavior derives from the fact that a certain amount of
energy (known as latent heat) is required to effect a change in
phase from solid to liquid at the interface between phases.

Because the function $\widetilde{\omega}(\HE)$ above is neither
differentiable nor invertible, we instead employ in our model a
regularized function $\omega(\HE)$ with ``rounded corners'' such as that
pictured in Figure~\ref{fig:omega-smooth}, which is incidentally a
better representation of what one would actually observe in a real
physical system:
\begin{gather}\label{def_omega}
  T = \omega(\HE) = 
  \left\{\begin{array}{cl}
      \ds\frac{1}{c_i} \HE,                      & \qquad \text{if $\HE < \HE_i^1$},\\[0.2cm]
      \text{\footnotesize[smooth connection]},& \qquad \text{if $\HE_i^1 \leq \HE < \HE_i^2$},\\[0.2cm]
      \ds\Tcrit-\frac{2\HE - (\HE_i^2 + \HE_w^1)}{2c_\infty},& \qquad \text{if $\HE_i^2 \leq \HE < \HE_w^1$},\\[0.2cm]
      \text{\footnotesize[smooth connection]},& \qquad \text{if $\HE_w^1 \leq \HE < \HE_w^2$},\\[0.2cm]
      \ds\Tcrit + \frac{1}{c_w}(\HE-\HE_w^2),    & \qquad \text{if $\HE_w^2 \leq \HE$}.
    \end{array}\right.
\end{gather}
The nearly flat portion of the temperature curve in the transition
region near $T\approx \Tcrit$ has a small positive slope
$\frac{1}{c_\infty}$, and the corners are smoothed out over the short
intervals $\HE_i^1 \lessapprox \HE_i \lessapprox \HE_i^2$ and $\HE_w^1
\lessapprox \HE_w \lessapprox \HE_w^2$. These smooth connections ensure
that $\omega$ is a continuously differentiable, invertible and monotone
increasing function of enthalpy.
\begin{figure}
  \centering
  \begin{tabular}{ccc}
    \includegraphics[height=0.2\textheight]{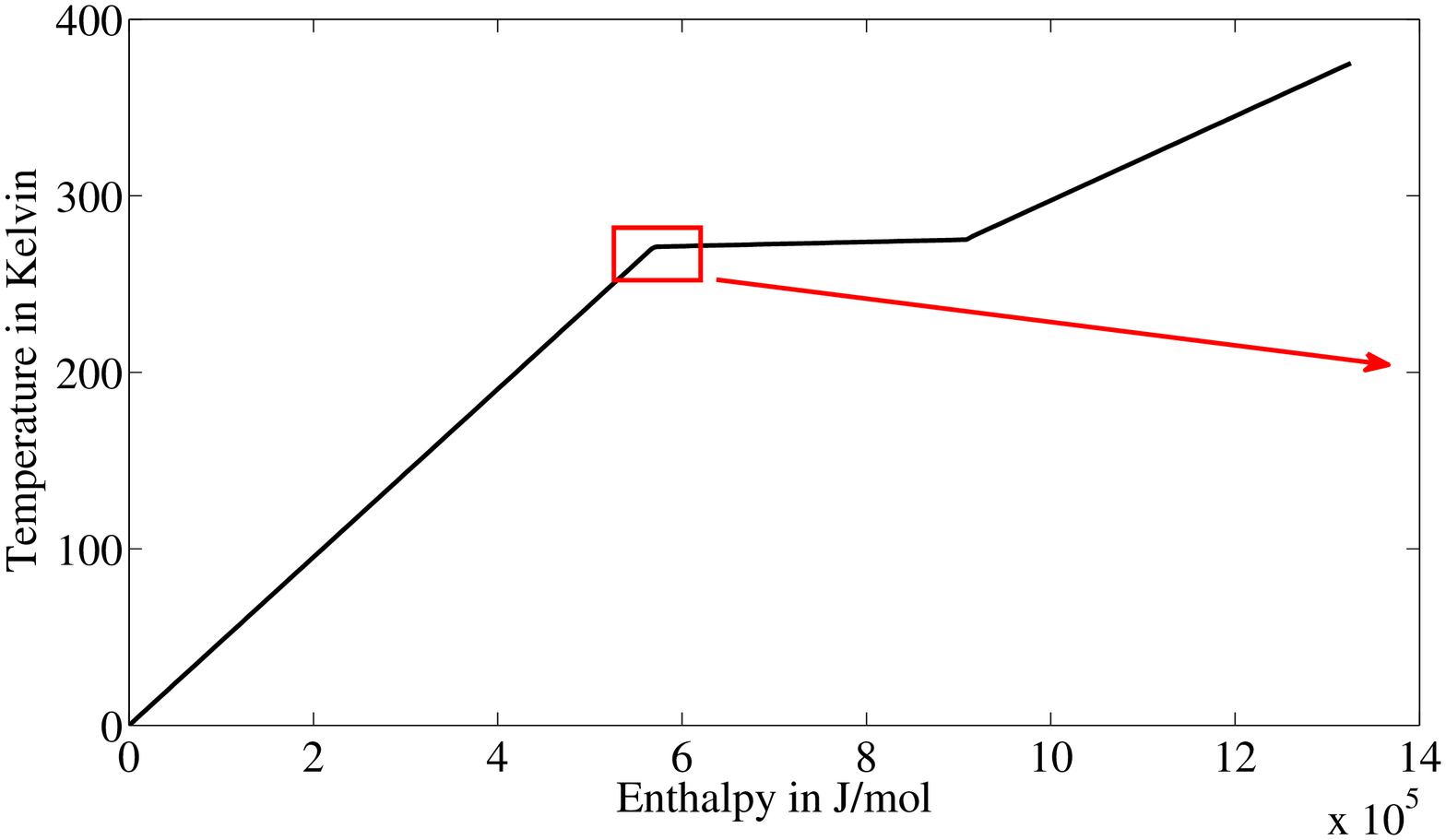}
    &\qquad&
    \includegraphics[height=0.2\textheight]{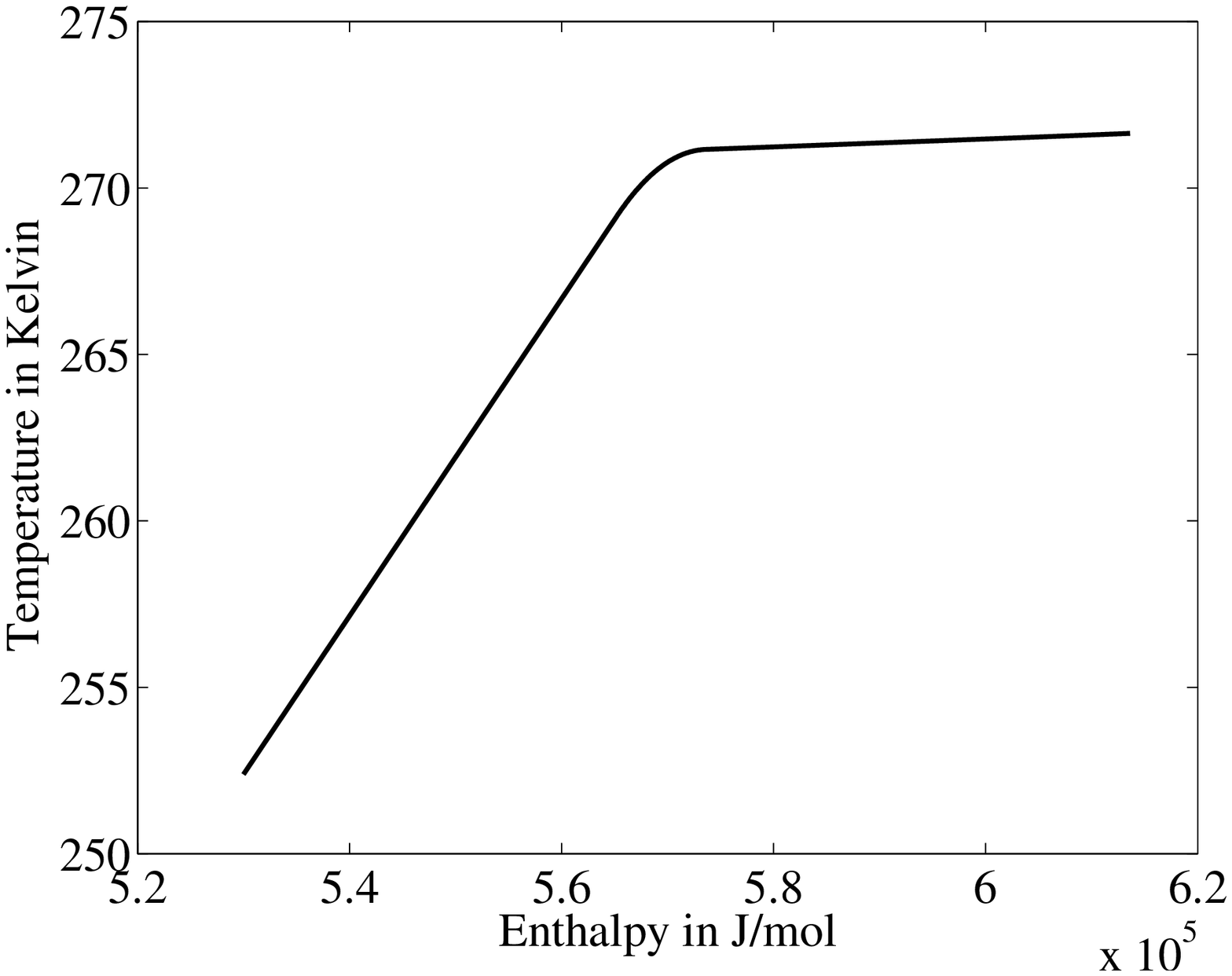}
  \end{tabular}
  \caption{Plot of the temperature-enthalpy function, $T=\omega(\HE)$,
    for a generic ice/water phase change problem in which the critical
    (melting) temperature is $\Tcrit=273.15\degK$.  The zoomed-in view
    on the right illustrates the smoothed corners in the
    regularization.}
  \label{fig:omega-smooth}
\end{figure}

We now describe the decomposition of the solution into slow and fast
diffusing variables.  The functions $\Toneeps$ and $\HEoneeps$ denote
the fast-diffusing temperature and enthalpy components (respectively)
and are defined on the sub-region $\Omega_\eps^1$. Similarly, $\Ttwoeps$
and $\HEtwoeps$ are the slow-diffusing temperature and enthalpy defined
on $\Omega_\eps^2$.  Next, we define the corresponding solution spaces
\begin{align*}
  \V^1_\eps &\defeq \{ u \in L^2([0,\tend], \Hil^1(\Omoneeps)) \cap
  \Hil^1([0,\tend], \Hil^1(\Omoneeps)')\; | \; u=0\text{ on
  }\partial\Omoneeps\cap\partial\Omega \},\\
  \V^2_\eps &\defeq \{ u \in L^2([0,\tend], \Hil^1(\Omtwoeps)) \cap
  \Hil^1([0,\tend], \Hil^1(\Omtwoeps)')\; | \; u=0\text{ on }\Gameps \},
\end{align*}
where the ``prime'' is used to denote a dual space and $[0,\tend]$
represents the time interval of interest for some fixed $\tend>0$.  The
corresponding test spaces are $V^1_\eps=\Hil^1(\Omoneeps)$ and
$V^2_\eps=\Hil^1_0(\Omtwoeps)$.  We also need to introduce notation for
inner products, with $(u,v)_{\Omega_\eps^\alpha} =
\int_{\Omega_\eps^\alpha} uv\,\text{d}x$ representing the inner product
of two functions in $\V^\alpha_\eps$ for $\alpha=1,2$, whereas
$(u,v)_{\Omega_\eps^\alpha,t} = \int_0^{t}\int_{\Omega_\eps^\alpha}
uv\,\text{d}x\, \text{d}\tau$ denotes that an additional time
integration is performed over the interval $[0,t]$ with $0\leq t\leq
\tend$.  Finally, we let $\langle u,v\rangle_{\Gamma_\eps} =
\int_{\Gamma_\eps}g_\eps uv\,\text{d}\sigma_x$ denote the inner product
on the interface $\Gamma_\eps$, where $g_\eps$ represents the Riemann
curvature tensor.

We are now prepared to state the weak form of the heat diffusion
problem.  Assuming that initial values $T_{1,\eps,init} =
\omega(\HE_{1,\eps,init})$ and $T_{2,\eps,init} =
\omega(\HE_{2,\eps,init})$ are smooth, non-negative and bounded, and
that a Dirichlet condition $\Toneeps = \Tout$ is imposed at the outer
boundary $\partial\Omega\cap\partial\Omega_\eps^1$, our goal is to find 
$(\Toneeps, \Ttwoeps) \in (\V^1+\Tout) \times (\V^2+\Toneeps)$ such that
\begin{subequations}\label{s_problem_weak}
  \begin{align}
    (\partial_t \HEoneeps,\; \phi)_{\Omoneeps} + (D(\HEoneeps)\nabla
    \Toneeps,\; \nabla\phi)_{\Omoneeps} 
    + \eps^2 \langle D(\HEtwoeps) \nabla \Ttwoeps\cdot \normvec,\;
    \phi\rangle_{\Gameps} &= 0, \label{s_problem_weak_1}\\  
    (\partial_t \HEtwoeps,\; \psi)_{\Omtwoeps} + (\eps^2
    D(\HEtwoeps)\nabla \Ttwoeps,\; \nabla\psi)_{\Omtwoeps} 
    &= 0,
    \label{s_problem_weak_2}
  \end{align}
\end{subequations}
for all $\phi, \psi \in V^1\times V^2$.  Note that $\normvec$ represents
the outward-pointing unit normal vector on $\Omtwoeps$, and that the
temperature and enthalpy are connected via $\Taeps = \omega(\HEaeps)$,
or equivalently $\HEaeps=\omega^{-1}(\Taeps)$.  Following
\cite{STEFAN5}, we take the diffusion coefficient to be a piecewise
affine linear function of enthalpy
\begin{align}\label{DH}
  D(\HE) = \left\{\begin{array}{cl}
      \frac{k_i}{\rho_i}, & \qquad \text{if } \HE<\HE_i,\\
      \frac{k_i}{\rho_i} + \frac{\HE-\HE_i}{\HE_w-\HE_i}
      \left(\frac{k_w}{\rho_w} - \frac{k_i}{\rho_i}\right), 
      & \qquad \text{if } \HE_i\leq \HE < \HE_w, \\ 
      \frac{k_w}{\rho_w}, & \qquad \text{if } \HE_w < \HE,\\ 
    \end{array}\right.
\end{align}
where $\rho_w$ and $\rho_i$ are water and ice densities ($kg/m^3$),
$k_w$ and $k_i$ are thermal conductivities ($W/m \, \degK$), and $\HE_w$
and $\HE_i$ are the corresponding enthalpies at $T=\Tcrit$.  We note in
closing that slow diffusion is induced in our problem by the factor
$\eps^2$ multiplying terms in Eqs.~\eqref{s_problem_weak} that involve
the diffusion coefficient $D(\HEtwoeps)$.

The following four sections contain the primary theoretical results
relating to solutions of the reduced problem in the weak form.

\subsection{Existence}
\label{sec:existence}

To prove the existence of a solution to \eqref{s_problem_weak} for every
$\eps>0$, we employ a result of Showalter \cite[Prop.~4.1
($p=2$)]{PDE5}.  Let $V$ represent a reflexive Banach space with dual
$V'$ and let $\V = L^2([0,\tend],V)$ with dual $\V' =
L^2([0,\tend],V')$.  We also have the Hilbert space $H$ with $V\subset
H\subset V'$, where $V$ is dense and continuously embedded in $H$.


\subsubsection{General function space setting}

Suppose that $\A:V\rightarrow V'$ is a given function (not necessarily
linear) where $u_0\in H$ and $f\in \V'$. We are interested in finding 
$u\in\V$ such that
\begin{align}\label{problem_existence}
  \int_0^\tend \partial_tu(t)v(t)\,\text{d}t 
  + \int_0^\tend \A(u(t))v(t)\,\text{d}t 
  = \int_0^\tend f(t)v(t)\,\text{d}t, 
\end{align}
with $u(0) = u_0\in H$ for every $v\in \V$. 
That is, $\V$ is the solution and the test space.

\begin{definition}
  An operator $\A$ is hemicontinuous if for each $u,\,v\in V$, the
  real-valued function $\beta\mapsto\A(u+\beta v)v$ is continuous.
\end{definition}

\begin{propShowalter}
  Let the spaces $V,H,\V$ be defined as above and assume that $V$ is
  separable.  Consider the family of operators $\A(t,\cdot):V\rightarrow
  V'$ for $0\leq t\leq \tend$ that satisfy the following conditions:
  \begin{enumerate}
  \item[(i)] for each $v\in V$ the function
    $\A(\cdot,v):[0,\tend]\rightarrow V'$ is measurable;
  \item[(ii)] for almost every $t\in[0,\tend]$ the operator
    $\A(t,\cdot):V\rightarrow V'$ is hemicontinuous and bounded
    according to
    \begin{gather*}
      \norm{\A(t,v)}\leq C(\norm{v} + g(t)),
    \end{gather*}
    for all $v\in V$ and $g\in L^2[0,\tend]$;
  \item[(iii)] there are real constants $\alpha,\lambda>0$ such that 
    \begin{gather*}
      \A(t,v)v + \lambda \norm{v}_H^2\geq \alpha\norm{v}^2_V,
    \end{gather*}
    for almost every $t\in[0,\tend]$ and $v\in V$.
  \end{enumerate}
  Then, for every $f\in \V'$ and $u_0\in H$, there exists $u\in \V$ such that
  \begin{gather*}
    \partial_t u(t) + \A(t,u(t)) = f(t) \; \in\; \V' 
    \quad \text{and} \quad u(0)=u_0\ \in\; H.
  \end{gather*}
\end{propShowalter}
We remark that in~\cite{PDE5}, Showalter imposed the additional
assumption that operator $\A$ is monotone; however, this condition
was only required for uniqueness of the solution, and we do not claim
uniqueness here.

\subsubsection{Application to reduced model, with existence result}
\label{sec:setting-id}

For our simple reduced model that captures heat diffusion in a periodic
array of ice bars immersed in water, the space $V$ is equal to
$\Hil^1(\Omoneeps)\times \Hil^1(\Omtwoeps)$ and hence 
the solution and test space $\V = L^2([0,\tend],\Hil^1(\Omoneeps)) \times
L^2([0,\tend],\Hil^1(\Omtwoeps))$.  We also have that $f\equiv 0\in \V'$
and $u_0 = (\HEoneeps(0),\HEtwoeps(0))$.  The operator $\A$
corresponding to the weak form of the problem \eqref{s_problem_weak} is
\begin{multline}
  \A u(v)  = \left(D(u_1)\omega'(u_1)\nabla u_1,\;\nabla
    v_1\right)_{L^2(\Omega_\eps^1)} + \eps^2\left\langle
    D(u_2)\omega'(u_2)\nabla u_2 \cdot \normvec,\;
    v_1\right\rangle_{\Hil^{-\frac12}(\Gamma_\eps)\times
    \Hil^{\frac12}(\Gamma_\eps)} \\ 
  + \eps^2\left(D(u_2)\omega'(u_2)\nabla u_2,\; \nabla
    v_2\right)_{L^2(\Omega_\eps^2)} 
\end{multline}
where $u=(u_1,u_2)$ and $v=(v_1,v_2)$ are both in $V$.  Notice that $\A$
doesn't depend explicitly on time $t$ and consequently hypothesis~(i) in
Showalter's Proposition is trivially satisfied.  We may then restate the
problem \eqref{problem_existence} as one of finding
$u=(\HEoneeps,\HEtwoeps)\in(\V^1+\omega^{-1}(\Tout)) \times
(\V^2+\HEoneeps)$ such that
\begin{multline}\label{problem_existence_1}
  \int_0^{\tend} (\partial_t\HEoneeps,\; \phi)_{\Omega_\eps^1} +
  (\partial_t\HEtwoeps,\; \psi)_{\Omega_\eps^2}\, \text{d}t  
  + \int_0^{\tend}\left(D(\HEoneeps)\omega'(\HEoneeps)\nabla \HEoneeps,\;
    \nabla \phi\right)_{L^2(\Omega_\eps^1)} \\ 
  + \eps^2\left\langle D(\HEtwoeps)\omega'(\HEtwoeps)\nabla \HEtwoeps
    \cdot \normvec,\; \phi\right\rangle_{\Hil^{-\frac12}(\Gamma_\eps)\times
    \Hil^{\frac12}(\Gamma_\eps)} \\ 
  + \eps^2\left(D(\HEtwoeps)\omega'(\HEtwoeps)\nabla \HEtwoeps,\; \nabla
    \psi\right)_{L^2(\Omega_\eps^2)}\, \text{d}t = 0 
\end{multline}
for all $\phi, \psi \in V^1\times V^2$.  To apply Showalter's
Proposition, we take $\HEoneeps$ and $\HEtwoeps$ equal to solution $u$
and therefore adjust the function spaces by adding the Dirichlet
conditions.  We thereby obtain the differential operator $\A$ used in our
model, which we can show satisfies the remaining two conditions in
Showalter's Proposition.  We state this result as a Lemma whose proof
is provided in Appendix~\ref{app:existence}.


\begin{lemma}\label{lemma_existence}
 The operator $\A:V\rightarrow V'$ is
 \begin{enumerate}[label=\alph*)]
  \item continuous,
  \item bounded, and
  \item coercive, in the sense that there are real constants
    $\lambda,\alpha>0$ such that 
    \begin{gather*}
      \A(t,v)(v) + \lambda \norm{v}_H^2\geq \alpha\norm{v}^2_V,
    \end{gather*}
    for almost every $v\in V$ and $t\in [0,\tend]$.
  \end{enumerate}
\end{lemma}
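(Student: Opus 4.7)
The plan is to verify the three properties separately, exploiting the uniform positivity and boundedness of the composition $D(\cdot)\omega'(\cdot)$ together with standard trace estimates on $\Gameps$. From the definitions in \eqref{DH} and \eqref{def_omega}, both $D$ and $\omega'$ are continuous and satisfy $0 < \kappa \leq D(\HE)\omega'(\HE) \leq M < \infty$ for constants $\kappa, M$ independent of $\HE$, and the product $D(\cdot)\omega'(\cdot)$ is moreover Lipschitz continuous on $\R$. These properties are the workhorses throughout.

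For continuity~(a), I would fix $u, w \in V$ and a test element $\phi \in V$, and show that $\beta \mapsto \A(u+\beta w)(\phi)$ is continuous at every $\beta_0 \in \R$. The integrands in each bulk term converge pointwise a.e.~as $\beta \to \beta_0$ by continuity of $D\omega'$, and they are dominated by a $\beta$-independent $L^1$ function via Cauchy--Schwarz and the bound $M$. Dominated convergence then yields continuity of the bulk contributions; the $\Hil^{-\frac12}\times \Hil^{\frac12}$ duality pairing on $\Gameps$ is treated analogously after interpreting the normal flux through a density argument.

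For boundedness~(b), Cauchy--Schwarz bounds each bulk integral by $M$ times the product of the relevant $\Hil^1$ norms, and the interfacial term is handled by the duality inequality together with the trace estimates $\|\phi\|_{\Hil^{1/2}(\Gameps)} \leq C(\eps)\|\phi\|_{\Hil^1(\Omoneeps)}$ and $\|\nabla u_2 \cdot \normvec\|_{\Hil^{-1/2}(\Gameps)} \leq C(\eps)\|u_2\|_{\Hil^1(\Omtwoeps)}$, the latter obtained by density from smooth functions. Summing yields $\|\A(u)\|_{V'} \leq C(\eps)\|u\|_V$, which verifies the growth condition of Showalter's Proposition with $g \equiv 0$.

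For coercivity~(c), the two bulk terms of $\A(v)(v)$ contribute at least $\kappa \|\nabla v_1\|^2_{L^2(\Omoneeps)} + \kappa \eps^2 \|\nabla v_2\|^2_{L^2(\Omtwoeps)}$. The mixed interfacial term is handled by Young's inequality with a small weight $\mu > 0$: the resulting factors $\|\nabla v_2 \cdot \normvec\|_{\Hil^{-1/2}(\Gameps)}^2$ and $\|v_1\|_{\Hil^{1/2}(\Gameps)}^2$ are bounded via the trace estimates above and absorbed into the bulk terms once $\mu$ is chosen small enough relative to $\kappa$. Any residual $L^2$ mass is absorbed into $\lambda\|v\|_H^2$ for $\lambda$ sufficiently large, giving the required inequality $\A(v)(v) + \lambda\|v\|_H^2 \geq \alpha\|v\|_V^2$ with some $\alpha = \alpha(\eps) > 0$. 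The main obstacle lies precisely in this step: the coupling term carries the $\eps^2$ prefactor on $\nabla v_2$ but no such factor on $v_1$, so the Young splitting must use asymmetric weights that preserve the slow-diffusion $\eps^2$ scaling while still absorbing the $v_1$-side into the $\|\nabla v_1\|^2$ bound. A secondary technical point is that $\nabla v_2 \cdot \normvec$ is only formally in $\Hil^{-1/2}(\Gameps)$ under a divergence hypothesis, which I would resolve by regularization and a passage to the limit.
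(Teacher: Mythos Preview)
Your strategy is sound and tracks the paper's proof closely in spirit: bulk terms via the uniform two-sided bounds on $D\omega'$, the interface term via trace estimates, and absorption into $\lambda\|v\|_H^2$. There are, however, two points of divergence worth noting.

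First, for part~(a) you argue only hemicontinuity (continuity of $\beta\mapsto\A(u+\beta w)(\phi)$ via dominated convergence), whereas the lemma claims full norm continuity of $\A:V\to V'$. The paper establishes the stronger statement directly by splitting $\A u-\A\tilde u$ and using the $L^\infty$-continuity (in fact Lipschitz continuity) of $D\omega'$ to control $\|D(u_i)\omega'(u_i)-D(\tilde u_i)\omega'(\tilde u_i)\|_{L^\infty}$ in terms of $\|u-\tilde u\|_V$. Your dominated-convergence argument suffices for Showalter's Proposition, but it does not literally prove the lemma as stated.

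Second, and more substantively, the paper does not leave the interface estimate at the level of an unspecified $C(\eps)$: it first proves a dedicated $\eps$-scaled Neumann trace inequality (Lemma~A.1),
\[
  \eps\,\|\eps\nabla v\cdot\normvec\|_{\Hil^{-1/2}(\Gameps)}^2 \le C\bigl(\|v\|_{L^2(\Omeps)}^2 + \|\eps\nabla v\|_{L^2(\Omeps)}^2\bigr),
\]
with $C$ independent of $\eps$, and pairs it with the analogous scaled Dirichlet trace for $v_1$. This is precisely the device that resolves the ``asymmetric scaling'' obstacle you flag: it converts the raw $\eps^2$ prefactor into the product $\eps^{1/2}\cdot\eps^{1/2}$ of two scaled traces, so that after Young (which the paper uses implicitly with unit weight) the resulting negative contributions are $-c_0\max\{D\omega'\}(\|v_1\|^2+\eps^2\|\nabla v_1\|^2+\|v_2\|^2+\eps^2\|\nabla v_2\|^2)$. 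The paper then needs the structural smallness condition $\min\{D\omega'\}>c_0\max\{D\omega'\}$ together with $\eps$ small to close the estimate. Your proposal to use a free Young parameter $\mu$ is in fact slightly more flexible here---it would let you trade between the $v_1$ and $v_2$ sides and avoid that structural condition, at the cost of an $\eps$-dependent $\alpha$---but to make it rigorous you still need exactly the $\eps$-scaled trace lemma, not a generic $C(\eps)$ bound. Your remark about the normal trace requiring a divergence hypothesis is apt; the paper's Lemma~A.1 sidesteps this by assuming $\nabla v\cdot\normvec\in\Hil^{-1/2}(\Gameps)$ as a hypothesis and then proceeding by density from $L^2(\Gameps)$.
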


From continuity it follows immediately that $\A$ is hemicontinuous.  
Therefore, we can apply Showalter's Proposition to obtain existence of
a solution to the system \eqref{problem_existence_1}.

\subsection{A~priori estimates}
\label{sec:a-priori}

Our main results on a~priori estimates are stated in
Lemma~\ref{lemma_estimates} below, in which the functions $\HEoneeps$,
$\Toneeps$, and $\HEtwoeps$ and $\Ttwoeps$ are bounded in
$L^2([0,\tend],\Hil^1(\Omoneeps))$ and
$L^2([0,\tend],\Hil^1(\Omtwoeps))$ independent of $\eps$ (part a).
Furthermore, $D(\HEtwoeps)\nabla\Ttwoeps$ must be bounded independent of
$\eps$ in $L^2([0,\tend],L^2(\Gameps))$ (parts b,c) and there is also a
strong convergence result for $\HEoneeps$ and $\HEtwoeps$ in
$L^2([0,\tend],L^2(\Omtwoeps))$ and $L^2([0,\tend],L^2(\Omoneeps))$
respectively (parts d,e,f).  The proof of this Lemma is given
Appendix~\ref{app:estimates}. 

\begin{lemma}\label{lemma_estimates}
Here, the notation $C_i$ represents a positive constant that is independent of
$\eps$. 
  \begin{enumerate}[label=\alph*)]
  \item  There exists a constant $\bigC_1$ such that 
    \begin{gather*}
      \Norm{\Toneeps}^2_{\Omoneeps} + \Norm{\nabla
        \Toneeps}^2_{\Omoneeps} + \Norm{\Ttwoeps}^2_{\Omtwoeps}
      + \eps^2\Norm{\nabla \Ttwoeps}^2_{\Omtwoeps} \leq \bigC_1  .
    \end{gather*}
    
  \item  There exists another constant $\bigC_2$ such that
    \begin{multline*}
      \Norm{\nabla \HEoneeps}^2_{\Omoneeps} +
      \Norm{\nabla\cdot[D(\HEoneeps)\nabla \Toneeps]}^2_{\Omoneeps,t} + 
      \eps^2\Norm{\nabla\HEtwoeps}^2_{\Omtwoeps} + \eps^4\Norm{\nabla
        \cdot [D(\HEtwoeps)\nabla \Ttwoeps}^2_{\Omtwoeps,t} \leq \bigC_2.
    \end{multline*}
    
  \item There exists a constant $\bigC_3$
    such that
    \begin{gather*}
      \eps^3\Norm{D(\HEtwoeps)\nabla \Ttwoeps}^2_{\Gameps,t} \leq
      \bigC_3. 
    \end{gather*}
    
    
  \item The functions $\HEoneeps$, $\HEtwoeps$ are nonnegative for almost
    every $x\in\Omoneeps$, $\Omtwoeps$ as long as $\HEoneeps(0)$,
    $\HEtwoeps(0)$ are nonnegative (respectively).
    
  \item There exists a constant $\bigC_4$
    such that
    \begin{gather*}
      \Norm{\HEoneeps}_{L^\infty(\Omoneeps)} +
      \Norm{\HEtwoeps}_{L^\infty(\Omtwoeps)} \leq \bigC_4.
    \end{gather*}
    
  \item There exists a constant $\bigC_5$
    such that 
    \begin{gather*}
      \Norm{\partial_t\HEoneeps}_{L^2([0,\tend],L^2(\Omoneeps))} +
      \Norm{\partial_t \HEtwoeps}_{L^2([0,\tend],L^2(\Omtwoeps))}
      \leq \bigC_5.
    \end{gather*}
  \end{enumerate}
\end{lemma}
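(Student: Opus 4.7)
The plan is to establish the six parts in the order (a), (d)--(e), (f), (b), (c), each leaning on the earlier ones. For the core energy estimate (a), I would test~\eqref{s_problem_weak_1} with $\phi = \Toneeps - \Tout$ (using a smooth extension of $\Tout$ into $\Omoneeps$) and~\eqref{s_problem_weak_2} with $\psi = \Ttwoeps - \Tout$, add them, and integrate over $[0,t]$. The time-derivative terms combine via the convexity identity $\partial_t \HEaeps \cdot \omega(\HEaeps) = \partial_t \Phi(\HEaeps)$ where $\Phi(\eta) \defeq \int_0^\eta \omega(s)\,\text{d}s$ is convex and coercive, producing a non-negative evolved quantity up to lower-order pieces. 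Since $D$ has uniform positive upper and lower bounds, the diffusion terms yield a lower bound of the form $c(\Norm{\nabla \Toneeps}^2_{\Omoneeps} + \eps^2\Norm{\nabla \Ttwoeps}^2_{\Omtwoeps})$. The flux term on $\Gameps$ in~\eqref{s_problem_weak_1} is combined with a reverse integration by parts applied to the $\Omtwoeps$ diffusion term, and after invoking the matching condition $\Toneeps = \Ttwoeps$ on $\Gameps$ the residue involving $\Tout$ is absorbed by Young's inequality. Gronwall closes the argument.

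For the maximum-principle bounds (d) and (e), I would test~\eqref{s_problem_weak} with the truncations $-(\HEaeps)^-$ and $(\HEaeps - M)^+$ respectively (with $M$ larger than the sup of initial and boundary data); the monotonicity of $\omega$ and positivity of $D$ give the diffusion terms a favourable sign, and the boundary and initial contributions vanish, forcing the truncated parts to vanish identically. For (f), I would formally differentiate the weak form in time and test with $\partial_t \Taeps$: the time-derivative term produces $\omega'(\HEaeps)(\partial_t \HEaeps)^2 \geq c(\partial_t \HEaeps)^2$ thanks to the uniform positive lower bound on $\omega'$ guaranteed by the regularization~\eqref{def_omega}, while the diffusion terms yield a total time derivative of an energy-like quantity plus a lower-order piece involving $\partial_t D = D'(\HEaeps)\partial_t \HEaeps$ that is absorbed using the $L^\infty$ bound (e). Rigorously, this is done at the Galerkin/time-discrete level of Section~\ref{sec:existence} and then passed to the limit.

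Part (b) splits into two pieces. The gradient bound follows from $\nabla \HEaeps = (\omega'(\HEaeps))^{-1}\nabla \Taeps$, the positive lower bound on $\omega'$, and part (a). The divergence estimates come from reading the strong forms $\partial_t \HEoneeps = \nabla\cdot(D(\HEoneeps)\nabla \Toneeps)$ and $\partial_t \HEtwoeps = \eps^2 \nabla\cdot(D(\HEtwoeps)\nabla \Ttwoeps)$ and invoking (f); the $\eps^4$ prefactor in the $\Omtwoeps$ divergence matches the square of the $\eps^2$ scaling of the slow diffusion. Finally, for (c) I would apply a scaled trace inequality for perforated domains, of the form $\Norm{w}^2_{L^2(\Gameps)} \leq C(\eps^{-1}\Norm{w}^2_{L^2(\Omtwoeps)} + \eps\Norm{\nabla w}^2_{L^2(\Omtwoeps)})$, to $w = D(\HEtwoeps)\nabla \Ttwoeps$; combined with (a), (b) and time integration, both contributions scale like $\eps^{-3}$, which gives precisely the claimed $\eps^3$-prefactor.

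The main obstacle throughout is careful $\eps$-bookkeeping in the coupled fast/slow structure. In (a) the interface term on $\Gameps$ must not dominate the diffusion, which forces the specific matching-based combination of test functions; in (c) the trace estimate hinges on the gradient and divergence bounds from (a)--(b) combining at just the right power of $\eps$, which in turn requires that the scaled trace inequality be applied cell-by-cell. A secondary subtlety appears in (f), where the quasilinear dependence $D = D(\HE)$ generates an extra $D'(\HE)\partial_t \HE$ term under time differentiation that can only be absorbed after the $L^\infty$ bound (e) is already available; this dictates the chosen order of the arguments.
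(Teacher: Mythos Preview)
Your overall strategy matches the paper's, but you reorder the arguments: the paper proves (b) first, by testing~\eqref{s_problem_weak} with $(\nabla\cdot[D(\HEoneeps)\nabla\Toneeps],\,\eps^2\nabla\cdot[D(\HEtwoeps)\nabla\Ttwoeps])$ and integrating by parts, which yields both the gradient bound on $\HEaeps$ and the divergence bound in one shot; part (f) then follows in a single line from the strong form $\partial_t\HEaeps=\nabla\cdot[D\nabla\Taeps]$ together with (b), without ever invoking (e) or time-differentiation. Your inverted route (f)$\to$(b) is valid in principle but costs more: the time-differentiated diffusion term produces a remainder of the form $\int D'(\HEaeps)\,\partial_t\HEaeps\,|\nabla\Taeps|^2$, and the $L^\infty$ bound (e) controls only the factor $D'$, not $|\nabla\Taeps|^2$; closing this step therefore needs either $L^4$ control of $\nabla\Taeps$ or some further structural argument that you have not spelled out.

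There is also a concrete gap in your part (c). The scalar trace inequality you invoke, applied componentwise to $w=D(\HEtwoeps)\nabla\Ttwoeps$, requires a bound on the \emph{full} gradient $\nabla(D(\HEtwoeps)\nabla\Ttwoeps)$ and hence on second derivatives of $\Ttwoeps$; but your (b), like the paper's, supplies only the \emph{divergence} $\nabla\cdot[D(\HEtwoeps)\nabla\Ttwoeps]$. The paper sidesteps this by using a trace estimate with the divergence on the right-hand side,
\[
\eps^{3}\Norm{D(\HEtwoeps)\nabla\Ttwoeps}^2_{\Gameps,t}\;\le\;\eps^{2}\Norm{D(\HEtwoeps)\nabla\Ttwoeps}^2_{\Omtwoeps,t}+\eps^{4}\Norm{\nabla\cdot[D(\HEtwoeps)\nabla\Ttwoeps]}^2_{\Omtwoeps,t},
\]
which feeds directly off (a) and (b). To make your version work you would need an additional scaled elliptic-regularity step turning the divergence bound into full $H^2$ control with the correct power of $\eps$, and with the quasilinear coefficient $D(\HEtwoeps)$ this is not automatic.
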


Taking Lemma~\ref{lemma_estimates} along with the extension theorem
in~\cite{TWOSCALE22}, we deduce that $\HEoneeps$ and $\HEtwoeps$ are
elements of $L^2([0,\tend],\Hil^1(\Omega))\cap
\Hil^1([0,\tend],\Hil^{-1}(\Omega))\cap
L^\infty([0,\tend]\times\Omega)$.  An application of Lemma~5.6
of~\cite{TWOSCALE21} then yields that $\HEoneeps$ and $\HEtwoeps$
converge strongly in $L^2([0,\tend],L^2(\Omega))$ to some functions
$\HE_{1,0}$ and $\HE_{2,0}$ respectively.  Because $D$ is a continuous
and bounded function, it then follows from the Theorem of
Nemytskii~\cite[p.~48]{PDE5} that the function $D : \HE_{i,\eps} \mapsto
D(\HE_{i,\eps})$ is also continuous and bounded for $i=1,2$. Therefore,
we conclude that $D(\HEoneeps)$ and $D(\HEtwoeps)$ converge strongly in
$L^2([0,\tend],L^2(\Omega))$ to $D(\HE_{1,0})$ and $D(\HE_{2,0})$,
respectively.

Finally, because $\Toneeps = \omega(\HEoneeps)$ where $\omega$ is
smooth, continuous and bounded (and similarly for $\omega^{-1}$), we
obtain that $\HEoneeps$ and $\Toneeps$ both converge strongly in
$L^2([0,\tend],L^2(\Omega))$ and that $T_{1,0} = \omega(\HE_{1,0})$.  An
analogous result holds for $\Ttwoeps = \omega(\HEtwoeps)$.

\subsection{Identification of the two-scale limit}
\label{sec:two-scale}

Armed with the fact that a solution of \eqref{s_problem_weak} exists for
every $\eps>0$, and that the {a~priori} estimates from the previous
section hold, we know that the terms in Eqns.~\eqref{s_problem_weak}
two-scale converge to some limit (see \cite{TWOSCALE1,
  TWOSCALE6}).  In order to investigate this limit, we define 
test functions that vary on length scales of size $O(1)$ and
$O(\eps)$ according to
\begin{gather*}
  \phi_\eps\left(x,\textstyle\frac{x}{\eps}\right) = \phi_0(x) +
  \eps\phi_1\left(x,\textstyle\frac{x}{\eps}\right)
  \qquad \text{and} \qquad 
  \psi_\eps\left(x,\textstyle\frac{x}{\eps}\right), 
\end{gather*}
where $(\phi_0,\phi_1) \in C^\infty(\Omega)\times
C^\infty(\Omega,C^\infty_\#(Y))$ and $\psi_\eps\in
C^\infty(\Omega,C^\infty_\#(Y))$, and the subscript $\#$ denotes
periodicity in space.
We also need to introduce the characteristic functions
\begin{gather*}
  \chi^i(\textstyle\frac{x}{\eps}) = 
  \begin{cases}
    1, & \text{if $\frac{x}{\eps}\in \Omeps^i$,}\\
    0, & \text{otherwise},    
  \end{cases}
\end{gather*}
for $i=1,2$.  By substituting $\phi_\eps$ into \eqref{s_problem_weak_1}
and using $\chi^i$ to write the resulting integrals over the entire
domain $\Omega$, we obtain
\begin{multline*}
  \int_{\Omega}\chi^1
  \left(\textstyle\frac{x}{\eps}\right) \partial_t\HEoneeps(x,t)
  \phi_\eps \left(x,\textstyle\frac{x}{\eps}\right) \, \text{d}x +
  \int_{\Omega}\chi^1 \left(\textstyle\frac{x}{\eps}\right)D(\HEoneeps)
  \nabla \Toneeps(x,t) \nabla\phi_\eps
  \left(x,\textstyle\frac{x}{\eps}\right) \,\text{d}x\\
  + \eps^2\int_{\Gamma_\eps}D(\HEtwoeps)\nabla \Ttwoeps(x,t)\cdot
  \normvec\phi_\eps \left(x,\textstyle\frac{x}{\eps}\right)\,
  \text{d}\sigma_x = 0.
\end{multline*}
Then, taking the limit as $\eps\to 0$ yields
\begin{multline}
  \label{eq:limit1}
  \int_{\Omega\times Y^1}\partial_t \HE_{1,0}(x,t)\phi_0(x)\,
  \text{d}y\, \text{d}x \\
  + \int_{\Omega\times Y^1}D(\HE_{1,0})[\nabla_xT_{1,0}(x,t) + \nabla_y
  \widehat{T}_{1,0}(x,y,t)]
  [\nabla_x\phi_0(x) + \nabla_y\phi_1(x,y)]\, \text{d}y\, \text{d}x\\
  + \int_{\Omega\times\Gamma}D(\HE_{2,0})\nabla_yT_{2,0}(x,y,t)\cdot
  \normvec\phi_0(x)\, \text{d}\sigma_y\, \text{d}x = 0,
\end{multline}
where $y$ denotes the variable that is defined on the reference cell $Y$.
Note that
$T_{1,0} = \omega(\HE_{1,0})\in L^2([0,\tend],\Hil^1(\Omega))$ is
independent of $y$, and we have also introduced $\widehat{T}_{1,0}\in
L^2([0,\tend],L^2(\Omega,\Hil^1_\#(Y^1)))$ and
$T_{2,0}=\omega(\HE_{2,0})\in
L^2([0,\tend],L^2(\Omega,\Hil^1_\#(Y^2)))$. The limit of $\nabla
T_{1,\eps}$ has a special form proven in \cite{TWOSCALE1} that consists
of two terms: one involving a gradient with respect to the slow
variable, and a second term with respect to the fast variable.  In a
similar manner, we substitute the second test function $\psi_\eps$ into
\eqref{s_problem_weak_2} to get
\begin{align*}
  \int_{\Omega}\chi^2
  \left(\textstyle\frac{x}{\eps}\right) \partial_t\HEtwoeps(x,t)
  \psi_\eps \left(x,\textstyle\frac{x}{\eps}\right)\, \text{d}x +
  \eps^2\int_{\Omega}\chi^2 \left(\textstyle\frac{x}{\eps}\right)
  D(\HEtwoeps)\nabla \Ttwoeps(x,t) \nabla\psi_\eps
  \left(x,\textstyle\frac{x}{\eps}\right)\, \text{d}x=0,
\end{align*}
and again take the limit as $\eps\to 0$ to obtain
\begin{align}
  \label{eq:limit2}
  \int_{\Omega\times Y^2}\partial_t\HE_{2,0}(x,y,t)\psi_0(x,y)\,
  \text{d}y\, \text{d}x +\int_{\Omega\times Y^2}
  D(\HE_{2,0})\nabla_yT_{2,0}(x,y,t) \nabla_y\psi_0(x,y)\,
  \text{d}y\, \text{d}x = 0. 
\end{align}

We are free at this point to choose any test function, and so now we set
$\phi_0 = 0$ in Eq.~\eqref{eq:limit1} and then introduce functions
$\mu_i\in \Hil^1_\#(Y^1)$ in order to express $\widehat{T}_{1,0}(x,y,t)
= \sum_{i=1}^n\partial_{x_i}T_{1,0}(x,t) \mu_i(y)$ in separable form.
The weak formulation of the cell problem for $i=1,\dots,n$ may then be
expressed in the much simpler form
\begin{align}
  \label{cell_problem1}
  (e_i + \nabla_y\mu_i, \; \nabla_y\phi_1)_{\Yone} = 0,
\end{align}
where $\mu_i$ is $Y$-periodic.  Alternately, we may take $\phi_1 = 0$ in
Eq.~\eqref{eq:limit1} to obtain 
\begin{multline*}
  \int_{\Omega\times Y^1}\partial_t \HE_{1,0}(x,t)\phi_0(x)\,
  \text{d}y\, \text{d}x 
  + \int_{\Omega\times Y^1}D(\HE_{1,0})
  \sum_{i=1}^n\partial_{x_i}T_{1,0}(x,t)[e_i + \nabla_y 
  \mu_i(y)]\nabla_x\phi_0(x)\, \text{d}y\, \text{d}x\\
  + \int_{\Omega\times\Gamma}D(\HE_{2,0})\nabla_yT_{2,0}(x,y,t)\cdot
  \normvec\phi_0(x)\, \text{d}\sigma_y\, \text{d}x = 0,
\end{multline*}
which can be rewritten in the more suggestive form
\begin{multline}\label{eq:limit3}
  \int_{\Omega\times Y^1}\partial_t \HE_{1,0}(x,t)\phi_0(x)\,
  \text{d}y\, \text{d}x 
  + \int_{\Omega}D(\HE_{1,0}) \sum_{i,j=1}^n\partial_{x_i}T_{1,0}(x,t)
  \int_{Y^1}[\delta_{ij} + \partial_{y_j} 
  \mu_i(y)]\, \text{d}y\, \partial_{x_j}\phi_0(x)\, \text{d}x\\
  + \int_{\Omega\times\Gamma}D(\HE_{2,0})\nabla_yT_{2,0}(x,y,t)\cdot
  \normvec\phi_0(x)\, \text{d}\sigma_y\, \text{d}x = 0. 
\end{multline}
The diffusion term involves the extra factors
\begin{align}
  \label{cell_problem2}
  \Pi_{ij} = \int_{\Yone}(\delta_{ij} + \partial_{y_j}\mu_i)
  \,\text{d}y 
\end{align}
for $i,j=1,\dots,n$ which can be represented as a matrix $\Pi$ that
multiplies the diffusion coefficient $D(\HE_{1,0})$.  Hence, we obtain
the limit equations from \eqref{eq:limit2} and \eqref{eq:limit3}
\begin{subequations}\label{s_problem_limit_1}
  \begin{align}
    \abs{Y^1}(\partial_t\HE_{1,0},\; \phi)_{\Omega} + (\Pi
    D(\HE_{1,0})\nabla_x T_{1,0},\; \nabla_x\phi)_{\Omega} +
    (D(\HE_{2,0})\nabla_y T_{2,0}\cdot \normvec,\; \phi)_{\Omega\times\Gamma}
    &=\ 0, 
    \label{s_problem_limit_1a}\\ 
    (\partial_t\HE_{2,0},\; \psi)_{\Omega\times Y^2} +
    (D(\HE_{2,0})\nabla_yT_{2,0},\; \nabla_y\psi)_{\Omega\times Y^2}
    &=\ 0, \label{s_problem_limit_1b} 
  \end{align}
\end{subequations}
for all $\phi\in \Hil^1(\Omega)$ and $\psi\in
L^2(\Omega,\Hil^1_\#(\Ytwo))$, where $\Pi$ is the $n\times n$ matrix of
scaling factors defined in \eqref{cell_problem2}, $T_{1,0}\in
L^2([0,\tend],\Hil^1(\Omega))$ with $T_{1,0}=\Tout$ on $\partial\Omega$,
and $T_{2,0}\in L^2([0,\tend],L^2(\Omega,\Hil^1_\#(\Ytwo)))$ with
$T_{2,0}=T_{1,0}$ on $\Omega\times\Gamma$.

To simplify notation in the remainder of the paper, we drop the zero
subscripts in $(T_{1,0},\HE_{1,0}, T_{2,0},\HE_{2,0})$ and denote them
instead by $(T_1,\HE_1,T_2,\HE_2)$.

\subsection{Uniqueness}
\label{sec:unique}

The uniqueness of the solution to equations \eqref{s_problem_limit_1}
may be formulated compactly in terms of the following lemma, which is
proven in Appendix~\ref{app:uniqueness}.

\begin{lemma}\label{uniqueness}
  Equations~\eqref{s_problem_limit_1} have at most one solution given by
  \begin{alignat*}{3}
    T_1 &\in \V^1(\Omega) + \Tout &&= L^2([0,\tend], \Hil^1_0(\Omega))
    \cap \Hil^1([0,\tend], L^2(\Omega)) + \Tout, \\  
    T_2 &\in \V^2(\Omega\times \Ytwo) + T_1 && = 
		L^2([0,\tend], L^2(\Omega,\Hil^1_\#(\Ytwo))) \cap 
		\Hil^1([0,\tend], L^2(\Omega\times\Ytwo)) + T_1, 
  \end{alignat*}
  where $T_1 = \omega(\HE_1)$ and $T_2 = \omega(\HE_2)$.
\end{lemma}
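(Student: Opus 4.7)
My plan is a standard energy argument for Stefan-type problems, built around monotonicity of $\omega$ and coercivity of $D$. Let $(T_1^a,T_2^a)$ and $(T_1^b,T_2^b)$ be two solutions in the stated spaces, with corresponding enthalpies $\HE_i^a = \omega^{-1}(T_i^a)$ and $\HE_i^b = \omega^{-1}(T_i^b)$, and denote the differences $\widehat{T}_i := T_i^a - T_i^b$ and $\widehat{\HE}_i := \HE_i^a - \HE_i^b$. I would subtract the two copies of \eqref{s_problem_limit_1a} and of \eqref{s_problem_limit_1b}, then test the first difference equation with $\phi = \widehat{T}_1$, which lies in $\Hil^1_0(\Omega)$ because the Dirichlet data $\Tout$ coincide, and the second with $\psi = \widehat{T}_2 \in L^2(\Omega,\Hil^1_\#(\Ytwo))$, which inherits the matching trace $\widehat{T}_2|_{\Omega\times\Gamma} = \widehat{T}_1|_{\Omega\times\Gamma}$ from the common interface condition $T_2 = T_1$ on $\Gamma$. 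Adding the two tested equations, the interface integral on $\Omega\times\Gamma$ from \eqref{s_problem_limit_1a} can be rewritten (via the matching trace) as paired against $\widehat{T}_2$ and thus combined consistently with the cell-gradient contribution of \eqref{s_problem_limit_1b}.

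For the time-derivative terms I would exploit that the regularized $\omega$ in \eqref{def_omega} is $C^1$ and strictly monotone with $\omega'$ uniformly bounded above and below by positive constants, so that $|\widehat{\HE}_i|$ and $|\widehat{T}_i|$ are pointwise comparable and $\widehat{T}_i \widehat{\HE}_i \geq 0$ pointwise. Integrating $\int_0^t\langle\partial_s\widehat{\HE}_i,\widehat{T}_i\rangle\,ds$ and using the convex primitive of $\omega$ (i.e.\ $\Phi$ with $\Phi' = \omega$) yields a quantity bounded below by a multiple of $\|\widehat{\HE}_i(t)\|_{L^2}^2$, up to cross-terms involving $\partial_s\HE_i^{a,b}$; the latter are in $L^2$ by Lemma~\ref{lemma_estimates}(f) and are Gronwall-absorbable.

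For the spatial terms I would split
$D(\HE^a)\nabla T^a - D(\HE^b)\nabla T^b = D(\HE^a)\nabla\widehat{T} + (D(\HE^a)-D(\HE^b))\nabla T^b$.
The first summand tested against $\nabla\widehat{T}$ is coercive, giving at least $D_{\min}\|\nabla\widehat{T}\|^2$; the same treatment applies on the macroscale since the homogenized matrix $\Pi$ from \eqref{cell_problem2} is symmetric positive definite by the standard cell-problem theory. The second summand is handled using Lipschitz continuity of $D$, the comparability $|\widehat{\HE}|\leq C|\widehat{T}|$, and Young's inequality, so a small multiple of $\|\nabla\widehat{T}\|^2$ is absorbed and one is left with a remainder of the form $\int \alpha(x,t)\,\widehat{T}^2\,dx$. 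Gronwall then forces $\widehat{T}_1 \equiv 0$ and $\widehat{T}_2 \equiv 0$, and monotonicity of $\omega$ transfers uniqueness to the enthalpies.

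The main difficulty I anticipate is controlling the remainder term $(D(\HE^a)-D(\HE^b))\nabla T^b$: the a~priori estimates of Section~\ref{sec:a-priori} give only $L^2$ regularity for $\nabla T^b$, whereas a naive splitting asks for higher integrability. The workaround is to exploit the uniform $L^\infty$ bound on $\HE^{a,b}$ from Lemma~\ref{lemma_estimates}(e) together with the smoothness of $D$ to produce a coefficient $\alpha(x,t)$ dominated by $|\nabla T^b(x,t)|^2 \in L^1_tL^1_x$, which is exactly the form needed for Gronwall. A secondary structural subtlety is the two-scale coupling: the cell-scale test $\widehat{T}_2$ lives on $\Omega\times\Ytwo$, so the matching trace on $\Omega\times\Gamma$ must be tracked carefully to ensure the surface integral from the macroscale equation and the cell-scale gradient term combine cleanly rather than leaving an unmatched boundary contribution.
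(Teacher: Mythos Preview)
Your overall strategy is correct and close to the paper's, but there is one substantive difference in execution: the paper tests the subtracted equations with the \emph{enthalpy} differences $\widehat{\HE}_1,\widehat{\HE}_2$ rather than the temperature differences $\widehat{T}_1,\widehat{T}_2$. With that choice the time-derivative contribution becomes exactly $\tfrac12\tfrac{d}{dt}\|\widehat{\HE}_i\|_{L^2}^2$, so no convex-primitive manipulation or cross-terms ever appear. The price is that the diffusion term must first be rewritten as $D(\HE)\,\omega'(\HE)\,\nabla\HE$, after which the same add--subtract/Lipschitz/Young splitting is run with the Lipschitz constant of $D\omega'$ in place of that of $D$ alone. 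The interface cancellation works identically, since $T_2=T_1$ on $\Gamma$ forces $\HE_2=\HE_1$ there, and the closing Gronwall step is the same as yours with the $L^1$-in-space-time coefficient $|\nabla\HE_{i,b}|^2$ playing the role of your $|\nabla T_i^b|^2$.

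Your route via $\widehat{T}_i$ is viable, but the time-derivative step is handled too casually. The convex-primitive computation does give a lower bound $\gtrsim\|\widehat{\HE}_i(t)\|_{L^2}^2$, but the residual is (after a second-order Taylor expansion of $\omega$) of the form $\int \omega''(\eta)\,|\widehat{\HE}_i|^2\,\partial_t\HE_i^b\,dx$: it is quadratic in $\widehat{\HE}_i$, yes, but against a coefficient $\partial_t\HE_i^b$ that is only in $L^2([0,\tend],L^2)$ by Lemma~\ref{lemma_estimates}(f), not in $L^1([0,\tend],L^\infty)$. That is precisely the same integrability obstruction you already flag for the spatial remainder, so you have doubled rather than reduced the number of places where the ``$L^1$-coefficient Gronwall'' issue must be confronted. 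The paper's choice of $\widehat{\HE}_i$ as test function sidesteps this extra term entirely, which is why it is the cleaner route here.
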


\subsection{Strong formulation of the limit problem}
\label{sec:strong_form}

We now state an equivalent strong formulation of the limit problem
corresponding to the weak form in Eqs.~\eqref{s_problem_limit_1}, which
consists of a PDE for $\HE_1$ on the entire domain $\Omega$ 
\begin{alignat*}{3}
  \abs{Y^1}\partial_t\HE_1 - \nabla_x\cdot(\Pi D(\HE_1)\nabla_xT_1) =
  \int_\Gamma D(\HE_2)\nabla_yT_2\cdot \normvec \, \text{d}\sigma_y
  \qquad&&\text{in}\ \Omega, \\
  T_1 = \Tout \qquad &&\text{on}\ \partial\Omega, \\
  \intertext{along with a second PDE for $\HE_2$ on 
    $\Omega \times Y^2$}
  \partial_t\HE_2 - \nabla_y\cdot (D(\HE_2)\nabla_yT_2) = 0 \qquad
  &&\text{on}\ \Omega\times Y^2, \\  
  T_2 = T_1\qquad &&\text{on}\ \Omega\times \Gamma, 
\end{alignat*}
along with suitable initial values for enthalpy that we denote
$\HE_{1,init}$ and $\HE_{2,init}$.  These two problems are coupled
through the temperature function $T_2$ via an integral term and the
final boundary condition, both of which are enforced on $\Gamma$.  This
is known as the \emph{two-phase formulation of the Stefan problem}, for
which there is no explicit equation for the motion of the phase
interface; instead, the interface location is captured implicitly
through the function $T=\omega(\HE)$.

Under suitable assumptions (see~\cite{STEFAN5}) the above problem may be
rewritten in an equivalent \emph{one-phase formulation} consisting of
the same equation for the enthalpy $\HE_1$ on the macroscale
\begin{subequations}
  \label{limit_problem_strong1}
  \begin{alignat}{3}
    \label{limit_problem_strong1a}
    \abs{Y^1}\partial_t\HE_1 - \nabla_x\cdot(\Pi D(\HE_1)\nabla_xT_1) =
    \int_\Gamma D(\HE_2)\nabla_yT_2\cdot \normvec\, \text{d}\sigma_y
    \qquad &&\text{in}\ \Omega,\\ 
    \label{limit_problem_strong1b}
    T_1 = \Tout \qquad &\qquad\qquad&\text{on}\ \partial\Omega,
  \end{alignat}
\end{subequations}
along with the following equations for temperature on the reference cell
\begin{subequations}
  \label{limit_problem_strong2}
  \begin{alignat}{3}
    \label{limit_problem_strong2a}
    \qquad\qquad\qquad\qquad\qquad c_w\partial_tT_2 - \nabla_y\cdot
    (D(\HE_2)\nabla_yT_2) = 
    0\qquad &&\text{on}\ \Omega\times \tilde{Y}^2(x,t),\\ 
    \label{limit_problem_strong2b}
    T_2 = T_1\qquad &&\text{on}\ \Omega\times \Gamma,\\ 
    \label{limit_problem_strong2c}
    T_2 = \Tcrit\qquad &&\text{on}\ \Omega\times\partial\tilde{Y}^2(x,t).
  \end{alignat}
\end{subequations}
These equations are supplemented by an evolution equation for the phase
boundary
\begin{alignat}{3}
  \label{limit_problem_strong3}
  \qquad\qquad\qquad\qquad\qquad\qquad\qquad \partial_t s =
  -\frac{D(\HE_2)}{(\HE_w-\HE_i)}\nabla_yT_2\cdot \normvec\qquad
  &&\text{on}\ \Omega\times\partial\tilde{Y}^2(x,t),
\end{alignat}
where the ice/water interface is denoted $\partial\tilde{Y}^2(x,t)$ and
depends on both $x$ and $t$ because the domains may differ in
$\Omega\times[0,\tend]$.  Note that in this one-phase formulation the
temperature is only defined within the portion of the domain that
contains water (for freezing) or ice (for thawing).  Furthermore, we
obtain an explicit equation \eqref{limit_problem_strong3} governing the
dynamics of the location $s(x,y,t)$ of the phase interface, which is
known as the \emph{Stefan condition}.  When performing numerical
simulations of the limit problem, it is most convenient to employ the
one-phase formulation of the Stefan problem.

\section{Numerical simulations of the homogenized problem}
\label{sec:sims-reduced}

\subsection{Description of the numerical algorithm}
\label{sec:sims-algo}

We now develop an algorithm for computing the solution $\HE_1(x,t)$,
$T_2(x,y,t)$ and $s(x,y,t)$ to the one-phase Stefan problem
\eqref{limit_problem_strong1}--\eqref{limit_problem_strong3}, which also
requires values of the coefficients $\Pi_{ij}$ from the reference cell
problem \eqref{cell_problem1}.  We consider only 2D simulations in this
paper, although the algorithm extends naturally to three dimensions.
Our approach is a simple one that is based on three key approximations:
\begin{itemize}
\item The coefficients $\Pi_{ij}$ are defined on the reference cell and
  are in fact constants that are independent of temperature and
  interface location.  Consequently, these coefficients only need to be
  computed once at the beginning of a simulation.
\item The reduced model has an inherent radial symmetry on the
  microscopic scale, and so we restrict ourselves in this paper to
  problems that have an analogous radial symmetry on the macroscale; 
  this is natural for the application to a tree cross-section
  considered in Section~\ref{sec:maple_sap_exudation}.  Consequently,
  all variables and governing equations are recast in terms of a single
  radial coordinate (which we continue to label as $x$ or $y$) so that
  only one-dimensional problems are solved at both the micro- and
  macroscale.
\item Because of the simple form of the coupling between microscale and
  macroscale problems that involves only interfacial solution values, we
  propose a simple ``frozen coefficient'' splitting approach for
  time-stepping in which each variable is advanced to the next time step
  individually, holding all other variables constant at their previous
  values.
\end{itemize}

With this in mind, we can now summarize our multiscale numerical
algorithm as follows.  We use a method-of-lines approach in which the
unknowns are discretized in space (radius) and then the resulting
coupled system of ordinary differential equations is integrated in time
using a stiff ODE solver.  In particular, we have implemented our
algorithm in {\sf Matlab} and employed the solver {\tt ode15s}.  Because
our ultimate goal is to simulate sap flow in a tree stem having a
circular cross-section, we consider a problem domain $\Omega$ that is a
circle with some given radius $\Rtree$.  The spatial discretization is
then performed on two distinct scales:
\begin{itemize}
\item the \emph{macroscale} corresponding to the circular domain
  $\Omega$, that is divided into an equally-spaced grid of $\Mmacro$
  radial points at each of which we have unknown values of $\HE_1$,
  $T_1$ and $s$; and
\item the \emph{microscale} corresponding to the reference cell $Y$,
  which is solved for $T_2$ and $E_2$ at each macroscale point.  The
  local ice region $\tilde{Y}^2$ grows or shrinks in time according to
  the local phase boundary location $s$.  Therefore, we use a
  \emph{moving mesh} approach wherein the reference cell is discretized
  at a set of $\Mmicro$ equally-spaced radial points that move in time
  as the ice/water phase interface moves.  In practice, it is sufficient
  to use a coarse grid here with $\Mmicro \ll \Mmacro$.
\end{itemize}
The algorithm now proceeds according to the following steps.
\begin{description}
\item[\sf Step 1:] Set initial values of $T_2=T_{2,init}$ at each
  macroscale point, while $\HE_1=\HE_{1,init}$ and $s=s_{init}$ within
  each local reference cell.
\item[\sf Step 2:] For a single canonical reference cell that takes the
  shape of a square with a circular hole, we use {\sf COMSOL} to
  approximate the functions $\mu_i(y)$ from the integral equation
  \eqref{cell_problem1} and to calculate the integral.  This yields 
  precomputed values of the four entries of matrix $\Pi$ that are used
  in the remainder of the computation.
\item[\sf Step 3:] At each time step, advance the solution components as
  follows:
  \begin{description}
  \item[\sf 3a.] Set $T_1=\omega(\HE_1)$ and $\HE_2=\omega^{-1}(T_2)$.
  \item[\sf 3b.] Discretize the microscale heat equation problem
    \eqref{limit_problem_strong2} in space on each local cell
    $\tilde{Y}^2(x,t)$ \emph{within the annular-shaped water region
      only} by using a moving spatial grid with points $y_j= s +
    j(\gamma-s)/\Mmicro$ for $j=0,1, \dots, \Mmicro$ (the ice region has
    known constant temperature $T_1\equiv T_c$ and so no discretization
    is required there).  A finite element approximation is used with
    linear Lagrange elements as basis functions, and the solution is
    integrated over a single time step by freezing the values of
    $\HE_1$, $T_1$ and $s$ at the previous step.  In practice,
    sufficient accuracy is obtained by taking $\Mmicro=4$.
  \item[\sf 3c.] Integrate the macroscale problem
    \eqref{limit_problem_strong1} for $\HE_1$ at the fixed grid points
    $x_i=i \Rtree/\Mmacro$ for $i=0, 1, \dots, \Mmacro-1$. Due to the
    radial symmetry of the domain $Y^2$, the integral in
    (\ref{limit_problem_strong1}a) reduces to $2\pi r D(\HE_2)\nabla
    T_2\cdot \normvec$ where $r$ is the radius of $Y^2$. Again, we use a
    finite element discretization in space with linear Lagrange
    elements, this time freezing $\HE_2$ and $T_2$ at the current
    values.  Note that here is where the precomputed values of
    $\Pi_{ij}$ are required from the reference cell.
  \item[\sf 3d.] Integrate the interface evolution equation
    \eqref{limit_problem_strong3} by using a finite difference
    approximation of the derivative in the right hand side, and by
    freezing values of $T_2$ at the current time. 
  \item[\sf 3e.] Increment the time variable and return to Step 3a. 
  \end{description}
\end{description}

The above algorithm must be modified when the ice completely melts and
the governing equations change in that the interface location $s$
becomes identically zero and the Stefan condition
\eqref{limit_problem_strong3} drops out.  At the same time, the
separation of the reference cell into two sub-domains $Y^1$ and $Y^2$
(which is required to handle the Dirichlet condition on temperature at
the phase interface) is no longer necessary and hence the temperature
can be described by the single field $T_1$ that obeys
\eqref{limit_problem_strong1} alone (with zero right hand side, constant
$D$, and $\Pi\equiv 1$).  This alteration to the governing equations can
be handled easily in the numerical algorithm above by a form of ``event
detection'' based on the ice bar radius $s$: when $s>0$, ice is still
present and the original equations are solved; but as soon as $s=0$, ice
is totally melted and the modified equations just described are solved
instead (omitting steps 3b and 3d).

\subsection{Numerical results}

We now present numerical simulations of the reduced model wherein a
periodic array of ice bars is suspended within a circular water-filled
domain.  The domain has a radius of $\Rtree=0.25\;m$ and the periodic
reference cell $Y=[0,\delta]^2$ depicted in Figure~\ref{fig:ref-cell} is
given a size $\delta=10^{-3}\;m$.  The system is solved in dimensional
variables so that there is no need to non-dimensionalize and hence the
size of the reference cell is simply equal to the physical dimension
$\delta$.  We assume that the ice bars each have initial radius
$s(0)=0.1\,\delta=10^{-4}\;m$ and we take the radius of the artificial
boundary $\Gamma$ in the reference cell to be $\gamma=0.45\,\delta$.  All
other physical parameter values are listed in Table~\ref{tab:params1}.

\begin{table}
  \centering
  \caption{Values of parameters in the reduced model for
    melting ice bars, taken from \cite{AHORN5}.} 
  \label{tab:params1}
  \begin{tabular}{clcc}\hline
    \emph{Symbol} & \emph{Description} & \emph{Value} & \emph{Units}\\\hline
    $c_w$    & Specific heat of water        & $4180$      & $J/kg \, \degK$ \\ 
    $c_i$    & Specific heat of ice          & $2100$      & $J/kg \, \degK$ \\
    $\HE_w$  & Enthalpy of water at $\Tcrit$ & $9.07\times 10^5$ & $J/kg$ \\ 
    $\HE_i$  & Enthalpy of ice at $\Tcrit$   & $5.74\times 10^5$ & $J/kg$ \\ 
    $k_w$    & Thermal conductivity of water & $0.556$     & $W/m\, \degK$ \\
    $k_i$    & Thermal conductivity of ice   & $2.22$      & $W/m\, \degK$ \\
    $\rho_w$ & Density of water              & $1000$      & $kg/m^3$ \\
    $\rho_i$ & Density of ice                & $917$       & $kg/m^3$ \\
    $\Tcrit$ & Melting (critical) temperature& $273.15$    & $\degK$ \\
    $\Tout$  & Ambient (outside) temperature & $\Tcrit+10$ & $\degK$ \\
    $T_{init}$ & Initial temperature         & $\Tcrit$    & $\degK$ \\
    $\delta$ & Length of reference cell      & $1.0\times 10^{-3}$   & $m$\\
    $\gamma$ & Radius of artificial boundary $\Gamma$ & $0.45\,\delta$ & $m$\\
    \hline
  \end{tabular}
\end{table}

Figure~\ref{fig:sims-reduced} presents snapshots at a number of time
points that show the variation in global temperature $T_1$ and the
radius of the ice bars $s$ throughout the domain.  As time goes on, the
temperature gradually increases as heat from the outer boundary diffuses
inwards.  In response to this rise in temperature, the ice melts and the
size of the ice bar in each local cell problem decreases.  The ice in
the outermost regions melts first and by time $t\approx 23 h$, the
entire domain is completely melted.  In contrast with the results in
\cite{AHORN5} that present the progress of thawing at some specific
location on the microscale, these results illustrate the progress of the
thawing front on the macroscale.
\begin{figure}
  \centering
  \subfigure[Temperature.]{\includegraphics[width=0.45\textwidth]{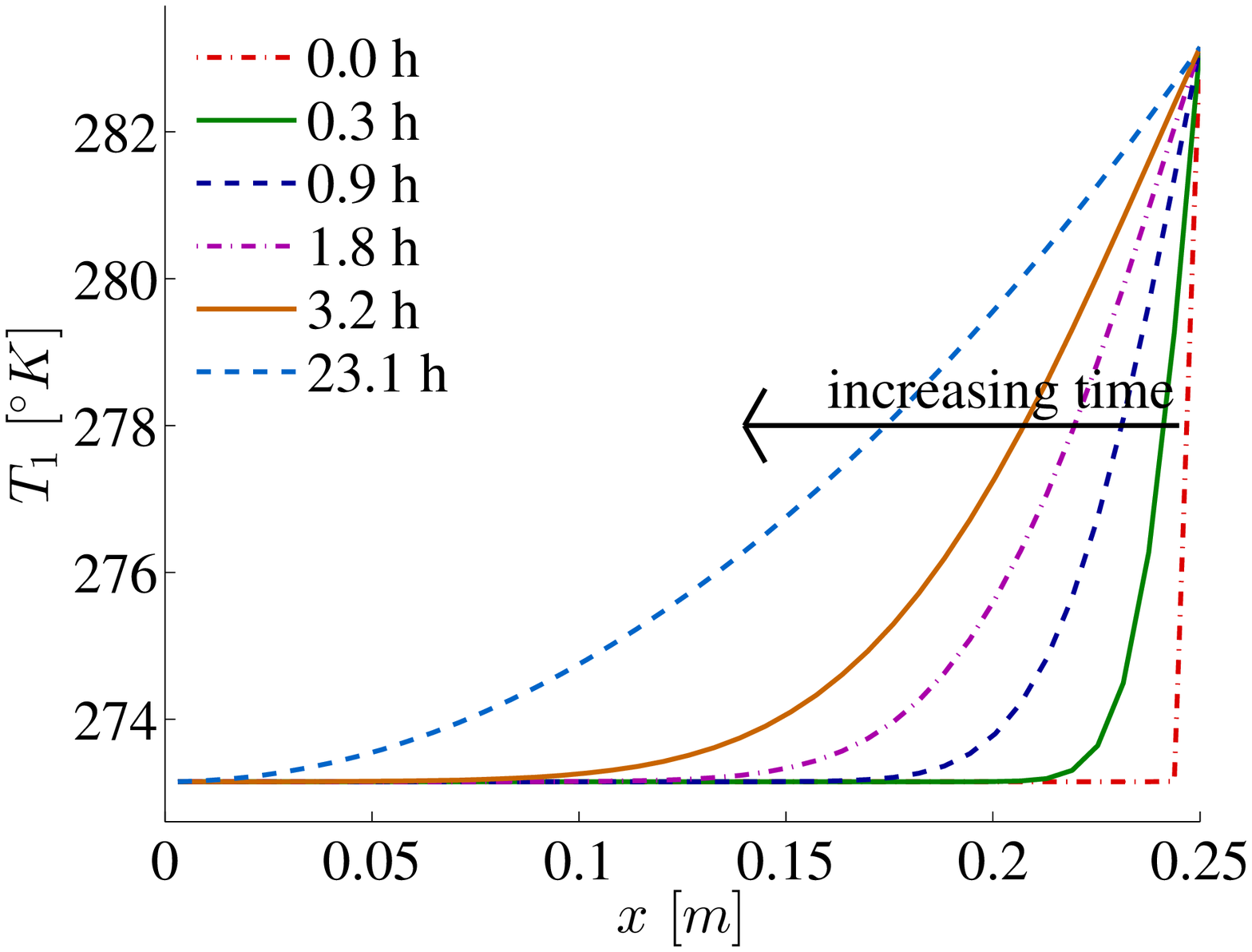}\label{fig:sims-reduced-a}}
  \qquad 
  \subfigure[Local ice bar radius.]{\includegraphics[width=0.45\textwidth]{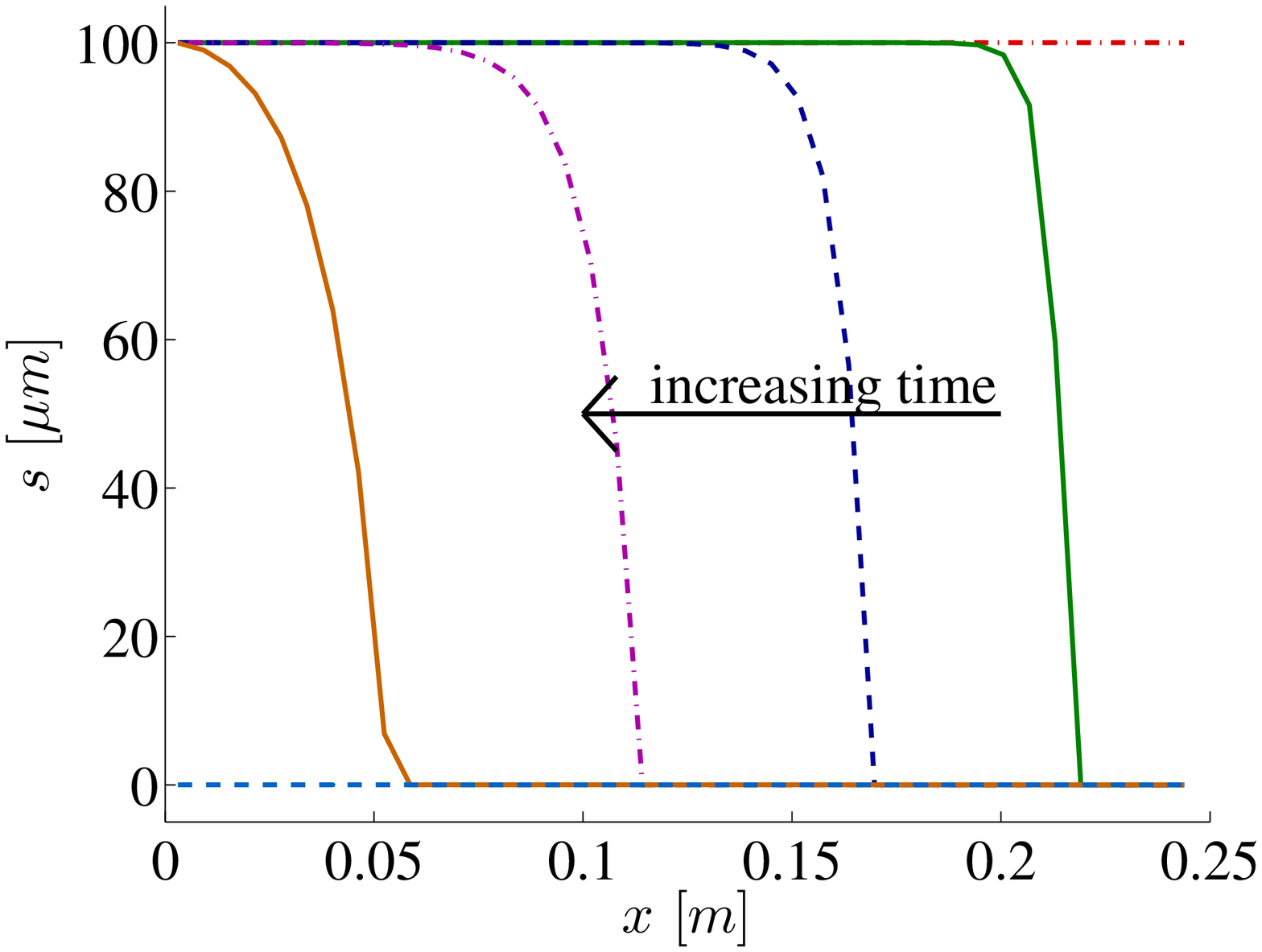}\label{fig:sims-reduced-b}}
  \caption{Solution profiles for the local problem, shown at selected
    times between 0 and 23~h.}
  \label{fig:sims-reduced}
\end{figure}

\section{Application to maple sap exudation}
\label{sec:maple_sap_exudation}

Our aim is now to apply the limit problem \eqref{limit_problem_strong1}
and \eqref{limit_problem_strong2} to the study of sap exudation in maple
trees, which was the original motivation for developing the periodic
homogenization analysis in this paper.  Sugar maple trees (along with a
few other relatives such as black maple, birch, walnut and sycamore)
have a unique ability amongst all tree species in that they exude large
quantities of sap during the winter in the leafless state.  Sap
exudation is driven by an elevated pressure in the tree stem that builds
up over a period of several days in which air temperature oscillates
above and below the freezing point.  The ability of maple to exude sap
has intrigued tree physiologists for nearly a century, and various
physical and biological processes have been proposed to explain this
behaviour~\cite{johnson-tyree-dixon-1987, milburn-kallarackal-1991,
  tyree-1995}.  Although a significant degree of controversy remains
over the root causes of sap exudation the most plausible and
widely-accepted explanation is a freeze/thaw hypothesis proposed by
Milburn and O'Malley \cite{AHORN2}.  The first complete mathematical
model of the physical processes underlying their hypothesis was
developed by Ceseri and Stockie\cite{AHORN5}, who focused on the thawing
half of the cycle.

\subsection{Physical background: The Milburn-O'Malley process}
\label{sec:milburn-omalley}

The Milburn-O'Malley process depends integrally on the distinctive
microstructure of the sapwood (or xylem) in sugar maple trees
(\emph{Acer saccharum}).  Wood in most deciduous tree species consists
of long, straight, roughly cylindrical cells that are on the order of
$1\;mm$ long.  These cells can be classified into two main types:
\emph{vessels} having an average radius of $20\;\mu m$, which are
surrounded by the much more numerous \emph{(libriform) fibers} with a
radius of roughly 3--4\;$\mu m$.  The repeating structure of vessels and
fibers is illustrated in Figure~\ref{fig:xylem}.  The vessels, being
significantly larger, provide the main route for transport of sap from
roots to leaves during the growing season whereas the fibers are
understood to play a largely passive structural role.  Under normal
conditions the vessels are filled with sap, which is mostly made up of
water but can also contain as much as 2--3\%\ sugar by weight for
species like the sugar maple.  On the other hand, fibers are thought to
be primarily filled with gas (i.e., air) and there is often also gas
present within the vessel sap either as bubbles or in dissolved form.
\begin{figure}
  \centering
  \footnotesize\itshape
  \begin{tabular}{ccc}
    \includegraphics[width=0.41\textwidth]{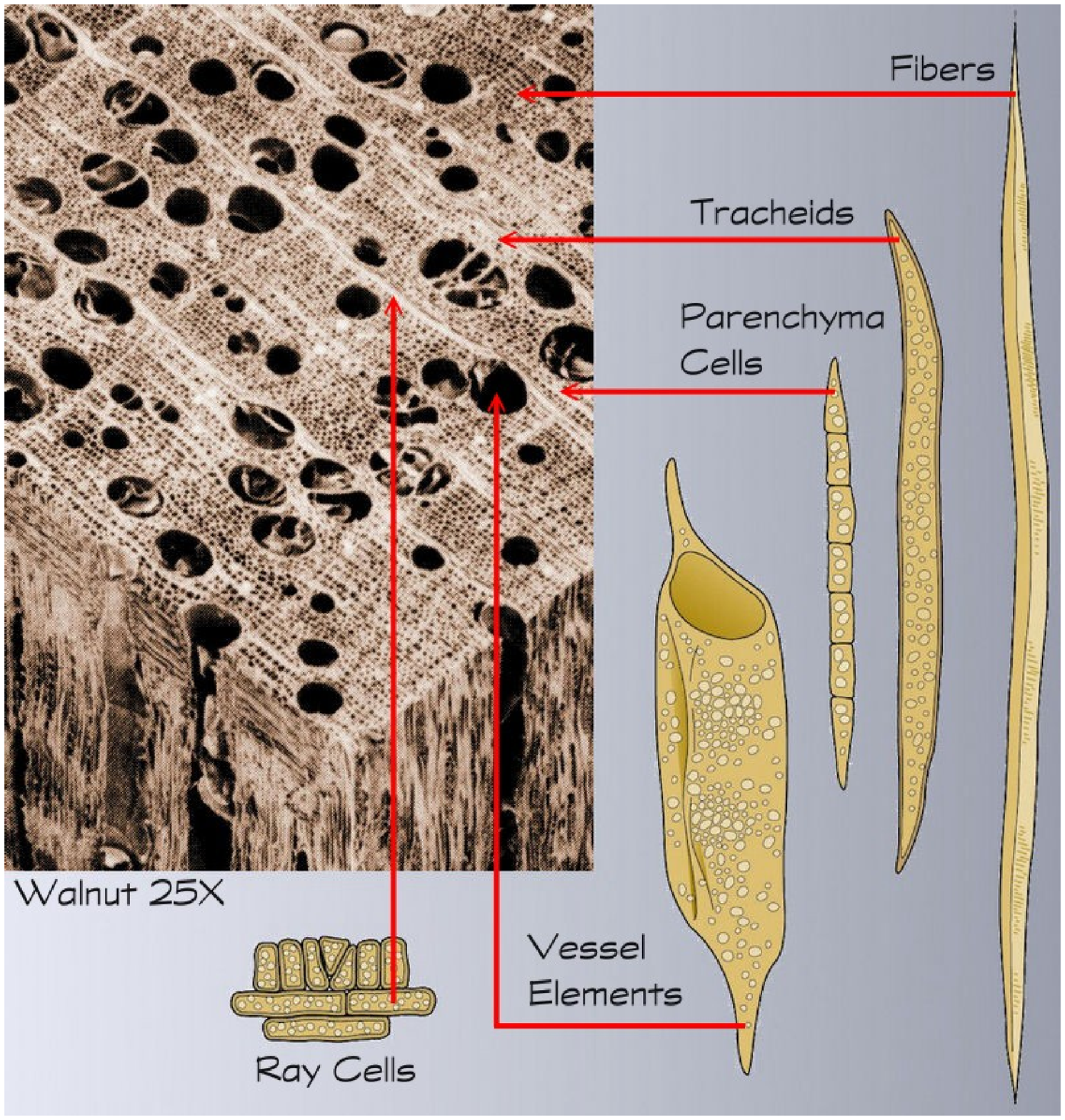}
    & &
    \includegraphics[width=0.52\textwidth]{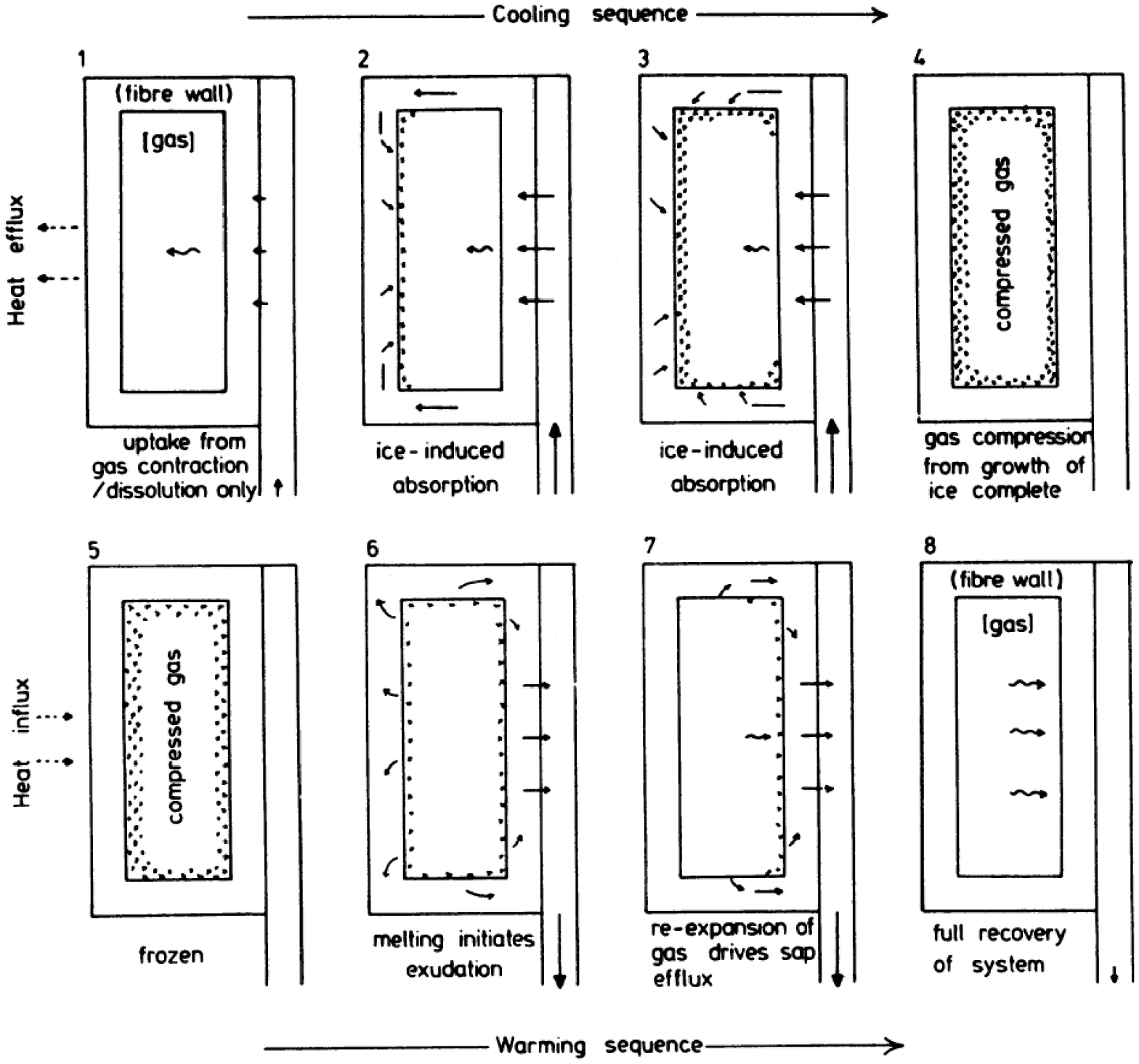}
    \\
    (a) & & (b)
  \end{tabular}
  \caption{(a) A cut-away view of the sapwood (or xylem) in a hardwood
    tree such as sugar maple, illustrating the repeating microstructure
    of vessels surrounded by fibers (the other cell types indicated here
    are ignored in our model).  Source: \emph{Workshop
      Companion}~\cite{workshop-companion}.  
    (b) Illustration of the Milburn-O'Malley
    process taken from~\cite[Fig.~7]{AHORN2} (\copyright\ 2008 Canadian
    Science Publishing, reproduced with permission).  Our focus is the
    ``warming sequence'' in the bottom row, numbers 5--8.  The fiber is
    the large rectangular structure on the left of each image and the
    vessel is represented by a vertical channel on the right (not drawn
    to scale).}
  \label{fig:xylem}
\end{figure}

Milburn and O'Malley hypothesized that during late winter when evening
temperatures begin to drop and just before the onset of freezing, sap is
drawn through tiny pores in the fiber/vessel walls by capillary and
adsorption forces into the gas-filled fibers where it begins to freeze
on the inner surface of the fiber wall (refer to
Figure~\ref{fig:xylem}(b), top ``cooling sequence'').  As temperatures
drop further the ice layer grows and the gas trapped inside is
compressed, forming a pressure reservoir.  When temperatures again rise
above freezing the next day, the process reverses: the ice layer melts
and the pressurized gas drives liquid melt-water back into the vessel
where it then (re-)pressurizes the vessel compartment (the ``warming
sequence'' in Figure~\ref{fig:xylem}(b)).  Milburn and O'Malley also
stressed the importance of incorporating osmotic effects in order to
obtain the high sap pressures actually observed in sugar maple trees,
and this has since been verified in experiments~\cite{AHORN4}.  The
osmotic pressure derives from the selectively permeable nature of the
fiber/vessel wall, which prevents sugars in the vessel sap from entering
the fiber and thereby introduces an osmotic pressure gradient between
the sugar-rich vessel sap and the pure water contained in the fiber. The
presence of sugar also leads to a freezing point depression effect in
which the vessel sap has a lower freezing point than the fiber water.

There are two additional features not included in~\cite{AHORN2} that are
essential in order to obtain physically relevant results.  First of all,
Ceseri and Stockie~\cite{AHORN5} demonstrated the necessity of including
gas bubbles within the vessel sap in order to permit exchange of
pressure between the vessel and fiber compartments.  Secondly, despite
the pervading belief that there is no significant root pressure in maple
during winter~\cite{kramer-boyer-1995,wilmot-2011}, we found to the
contrary that it is essential to include uptake of root water during the
freezing process in order that pressure can accumulate over multiple
freeze/thaw cycles.  Indeed, this second addition is supported by the
recent experiments of Perkins and van~den Berg~\cite{brown-2013} that
demonstrate the existence of significant root pressures during the sap
harvest season.

\subsection{Mathematical formulation}
\label{sec:maple_description}

The physical description of the modified Milburn-O'Malley hypothesis
outlined in the previous section (with the exception of the root
pressure) was used in~\cite{AHORN5} to derive a mathematical model for
the cell-level processes driving sap exudation over a single freezing
cycle.  Our reduced model for melting ice bars described in
Section~\ref{sec:problem1} captures the phase change aspects of this
process, with the ice bars being analogous to regions of frozen water
contained within fiber cells in the sapwood.  However, we need to
add a number of extensions to the reduced model in order to
capture the remainder of the physics inherent in sap exudation:
\begin{enumerate}
\item \emph{Introduce a gas phase:} which takes the form of a gas
  bubble in both fiber and vessel and implies that the diffusion
  coefficient varies in space.  Although we do not provide the analytical
  derivation for this more general class of diffusion coefficient, the
  extension of our analysis is straightforward.
\item \emph{Dissolved sugar in the vessel sap:} which gives rise to an
  osmotic potential between fiber and vessel that is essential for
  generating realistic exudation pressures.  Sugar within the vessel sap
  also depresses the freezing point so that the function $\omega$ in
  \eqref{def_omega} differs between vessel and fiber.  Although we do not
  need to incorporate freezing point depression explicitly in our model,
  it could be handled in several ways, for example by adding an extra
  spatial dependence in $\omega$. Alternatively, the fiber could be
  defined as separate domain that is connected to the vessel via
  appropriate boundary conditions, thereby ensuring that the
  homogenization results carry through for the sap exudation model.  We
  have therefore chosen not to present the analytical derivation for
  this case, although various possible functional forms for $\omega$ can
  be found in \cite{bossavit-damlamian-1981}.
\item \emph{Permeability of the cell wall to water:} which permits the
  fiber/vessel pressure exchange and also sets up the sugar
  concentration difference that drives osmosis.  This in turn introduces
  an additional term in equation \eqref{limit_problem_strong3} (for the
  strong formulation) that governs the dynamics of the ice/water
  interface.
\end{enumerate}
Although we do take the necessary step of including the gas phase in
both cell chambers (fiber and vessel) we assume for the sake of
simplicity that the effects of gas dissolution and nucleation are
negligible.  This is the primary difference between our microscale model
and the model in~\cite{AHORN5} on which it is based, although
re-introducing gas dissolution effects would certainly be an interesting
topic for future research.

With the above modifications in mind, we consider the modified reference
cell geometry pictured in Figure~\ref{fig:sap-geometry}a wherein a
circular fiber of radius $\Rf$ is located adjacent to a vessel
compartment.  The fiber is sub-divided into nested annular regions
containing gas, ice and liquid, and the outer radii of the phase
interfaces are denoted $s_{gi}$ (for gas/ice) and $s_{iw}$ (for ice/water).
The vessel also contains a circular gas bubble of radius $r$.  One
additional time-dependent variable is introduced to measure the total
volume of water transferred from fiber to vessel, denoted $U$.  These
four variables depend on time $t$ and also on the position $x$ of the
reference cell in space.  The region lying outside the fiber and inside
the boundary of the square reference cell represents the sap-filled
vessel.  Although the shape of this vessel region clearly does not
correspond to the cylindrical shape of real vessels, we argue that it is
nonetheless a reasonable approximation considering that the vessel is so
much larger than the fiber.  This reference cell geometry should be
contrasted with that used in \cite[Fig.~3.1]{AHORN5}.
\begin{figure}
  \centering
  \begin{tabular}{ccc}
    (a) Reference cell, with ice.\mbox{\hspace*{2.5cm}}
    &&
    (b) Reference cell, without ice.\\
    \includegraphics[height=0.28\textheight]{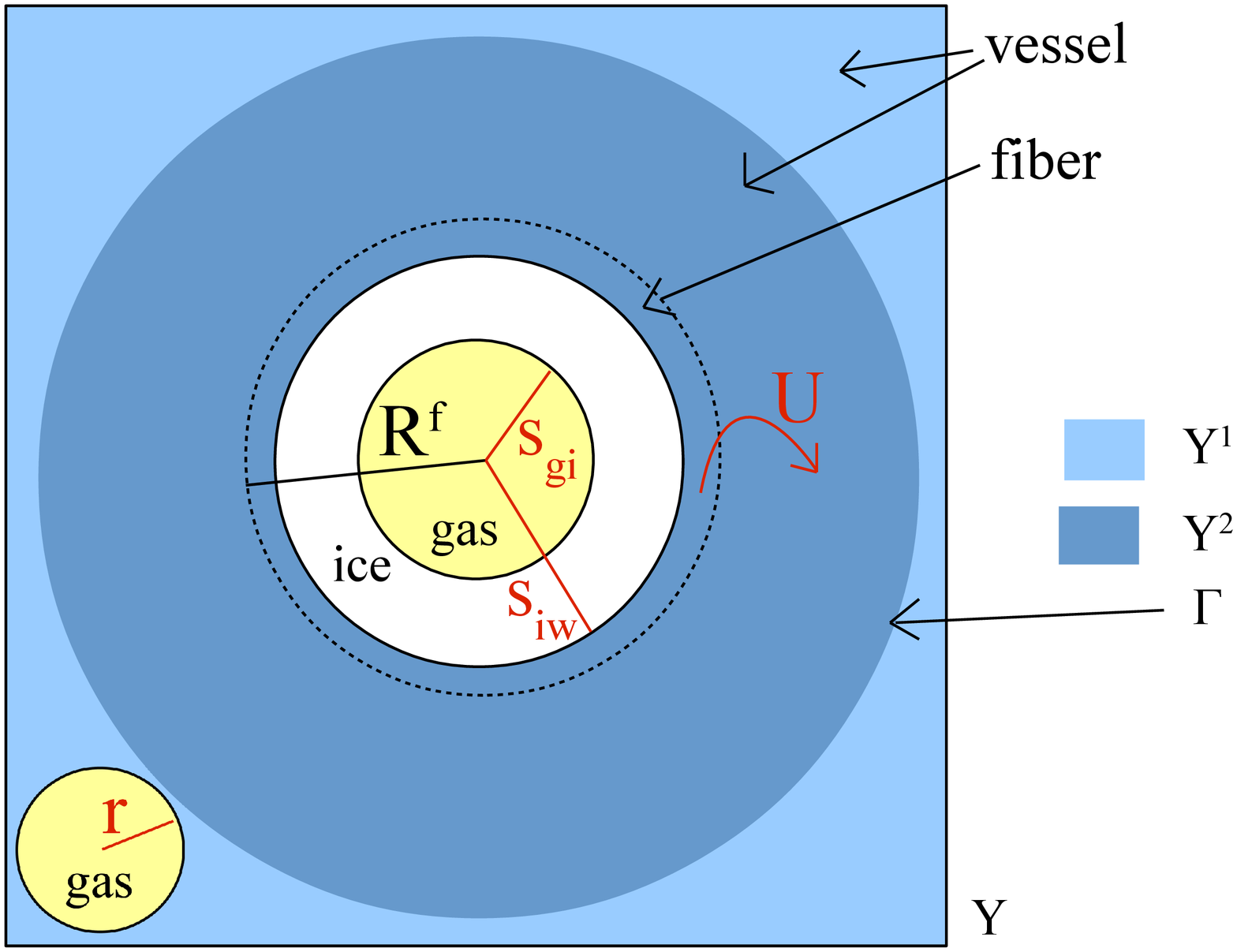}
    && 
    \includegraphics[height=0.28\textheight]{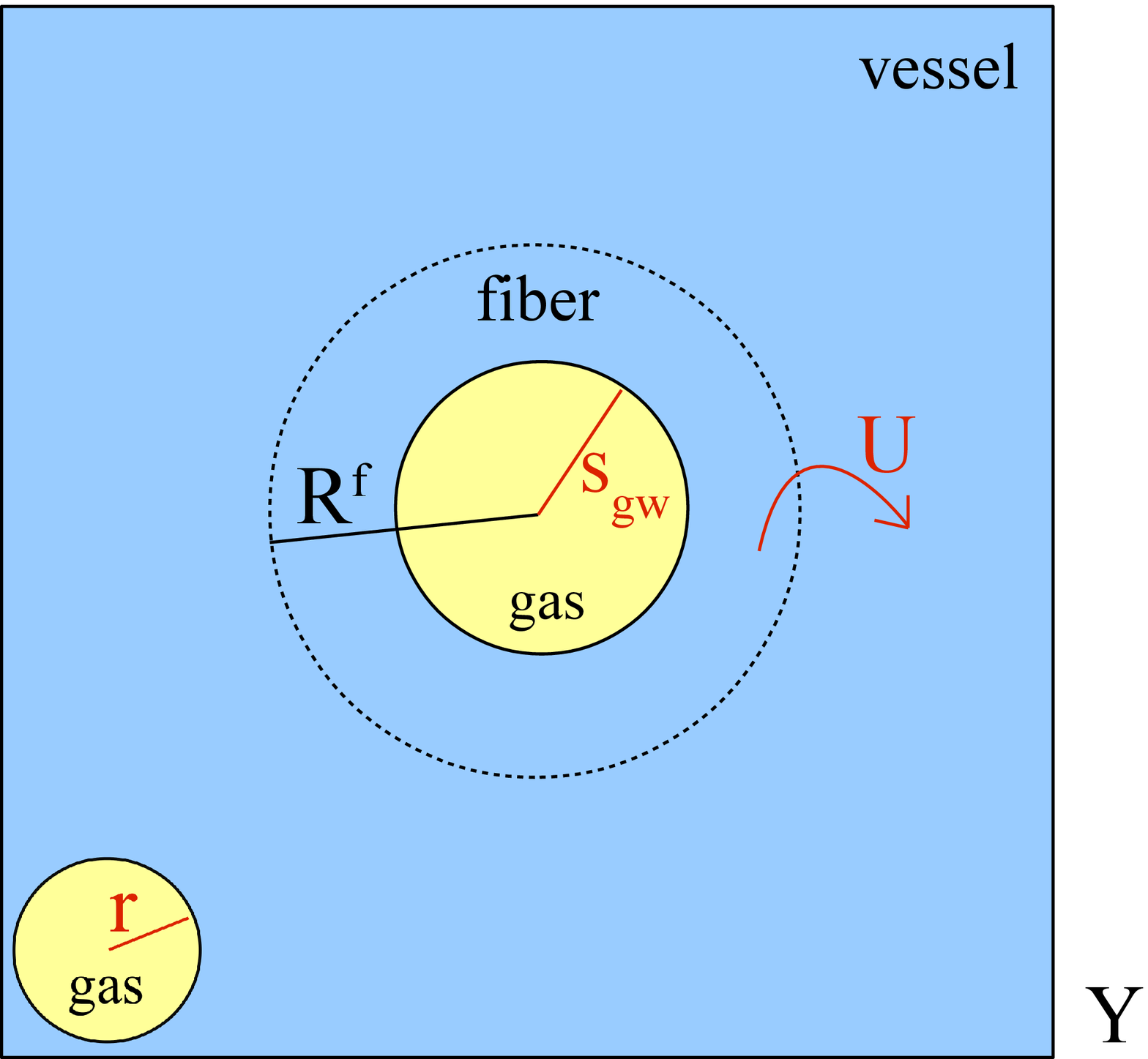}
  \end{tabular}
  \caption{Geometry of the reference cell for the maple sap exudation
    problem.  (a) Initially, the vessel compartment contains a gas
    bubble with radius $r$, while the circular fiber (radius $\Rf$)
    contains a gas bubble (radius $s_{gi}$), surrounded by an ice layer
    (thickness $s_{iw}-s_{gi}$), and finally a layer of melt-water
    (thickness $\Rf-s_{iw}$).  The porous wall between fiber and vessel
    is denoted by a dotted line.  As ice melts, the melt-water is forced
    out by gas pressure through the porous fiber wall into the
    surrounding vessel compartment.  The total volume of melt-water
    transferred from fiber to vessel is denoted $U$.  (b) After the ice
    has completely melted, the problem simplifies significantly leaving
    only a single interface ($s_{gw}$, between gas/water) and
    a single temperature field $T_1$ for the domain $Y\equiv Y^1$.}
  \label{fig:sap-geometry}
\end{figure}

We are now prepared to describe the governing equations for the
homogenized sap exudation problem.  The domain of interest $\Omega$ is a
2D circular cross-section of a tree stem.  At every point in $\Omega$ is
associated a reference cell $Y$, and hence the actual domain of interest
consists of $\Omega\times Y$.  In comparison with the reduced problem
studied in Section~\ref{sec:problem1}, the sap exudation problem
includes additional phases that need to be incorporated into the
local/global temperatures as well as the phase interfaces, as described
under points 1--3 above.  The equations for temperature are similar to
those for the reduced model so that the periodic homogenization results
require no significant changes.  Therefore, the same limit problem
\eqref{limit_problem_strong1} and \eqref{limit_problem_strong2} derived
earlier for the reduced model may also be used for the maple sap
exudation application, with Eq.~\eqref{limit_problem_strong3} replaced
by Eq.~\eqref{cell_problem_maple_siw}.

For the reference cell problem, the dynamics for $s_{iw}$, $s_{gi}$, $r$
and $U$ are governed by four differential equations whose
derivation can be found in \cite{AHORN5}:
\begin{subequations}
  \label{cell_problem_maple}
  \begin{align}
    \partial_t s_{iw} &= -\frac{D(\HE_2)}{\HE_w-\HE_i}\nabla_y T_2\cdot
    \normvec + \frac{1}{2\pi s_{iw} \Lf} \, \partial_t U, 
    & \quad \text{(Stefan condition)}
    \label{cell_problem_maple_siw}
    \\ 
    0 &= -\rho_is_{gi}\partial_t s_{gi} +
    (\rho_i-\rho_w)s_{iw}\partial_t s_{iw} + \frac{1}{2\pi \Lf}\, \partial_t U, 
    & \quad \text{(conservation of mass)}
    \label{cell_problem_maple_sgi}
    \\ 
    \partial_t r &= \frac{N}{2\pi r \Lv}\,  \partial_t U,
    & \quad\text{(conservation of volume)}
    \label{cell_problem_maple_r}
    \\ 
    \partial_t U &= -\frac{KA}{N \rho_wgW}(p_w^v - p_w^f - \gasR C_s T_1).
    &\quad\text{(Darcy's law and osmotic pressure)}
    \label{cell_problem_maple_U}
  \end{align}
\end{subequations}
Here we have introduced new variables for pressure $p$ and density
$\rho$, and the superscripts $f$/$v$ refer to fiber/vessel while
subscripts $g$/$i$/$w$ refer to gas/ice/water phases.  We make
particular note of the fact that in sapwood there are many more fibers
than vessels (see Figure~\ref{fig:xylem}a) and so the effect of any
fiber-vessel flux terms must be increased to take into account the
multiplicity of the fibers.  With this in mind, we have introduced the
parameter $N$ in
\eqref{cell_problem_maple_r}--\eqref{cell_problem_maple_U} that
represents the average number of fibers per vessel, and for which we
choose $N=16$.  Note also that in the Stefan
condition~\eqref{cell_problem_maple_siw} for the motion of the ice/water
interface we have incorporated an additional term ${\partial_t U}/({2\pi
  s_{iw} \Lf})$ that was neglected in~\cite{AHORN5} and serves to
capture the change in fiber water volume due to outflow through the
fiber/vessel wall.

Also appearing in \eqref{cell_problem_maple} are several intermediate
variables that are given by the following algebraic relations
\begin{subequations}
  \label{algebraic}
  \begin{align}
    p_w^f &= p_g^f(0)\frac{s_{gi}(0)^2}{s_{gi}^2} -
    \frac{\sigma_w}{s_{gi}} , 
    & \quad\text{(Young-Laplace equation for fiber)}
    \label{algebraic_pwf}
    \\
    p_w^v &= p_g^v - \frac{\sigma_w}{r},  
    & \quad\text{(Young-Laplace equation for vessel)}
    \label{algebraic_pwv}
    \\
    p_g^v &= \frac{\rho_g^v \gasR T_1}{M_g} , 
    & \quad\text{(ideal gas law for vessel)}
    \label{algebraic_pgv}
    \\
    \rho_g^v &= \frac{\rho_g^v(0)\Vv_g(0)}{\Vv_g + \Henry(\Vv - \Vv_g)}, 
    \qquad\qquad\qquad\qquad\qquad
    & \quad\text{(density-volume correlation)}
    \label{algebraic_rhogv}
    \\
    V_g^v &= \pi r^2 \Lv, 
    & \quad\text{(volume of gas in vessel)}
    \label{algebraic_Vgv}
  \end{align}
\end{subequations}
where $V$ represents volume.  All other quantities appearing in the
above equations that have not been introduced previously correspond to
constant parameters whose definitions and numerical values are listed in
Table~\ref{tab:params2}.

In any given reference cell, a thawing scenario will eventually reach a
time when the fiber ice layer is completely melted, for which the
reference cell is pictured in Figure~\ref{fig:xylem}b.  In the absence
of ice, the macroscale equations remain unchanged but the microscale
problem must be modified in a manner very similar to what we did for the
reduced problem at the end of Section~\ref{sec:sims-algo}.  In the sap
exudation case the diffusion coefficient $D$ must account for the fact
that there are two possible values, one in the gas region and one in the
water-filled region.  Furthermore, the microscale equation for the
ice/water interface \eqref{cell_problem_maple_siw} drops out and we
identify $s_{iw} \equiv s_{gi} \eqdef s_{gw}$, where the latter refers
to the new fiber gas/water interface.  This leads to a slight
simplification in \eqref{cell_problem_maple_sgi} since the two
time-derivative terms involving $\rho_i$ cancel.  Otherwise, the
microscale equations \eqref{cell_problem_maple} and \eqref{algebraic}
remain the same.


\subsection{Simulations of maple sap exudation}
\label{sec:maple_simul}

A tree stem is roughly a right circular cylinder, and so we employ in
our numerical simulations a macroscopic domain $\Omega$ consisting of a
2D horizontal stem cross-section in the shape of a circle of radius
$\Rtree$.  Because of radial symmetry, we therefore have a
one-dimensional problem that varies only in the radial direction.  We
take a representative value of $\Rtree=0.25\;m$ and choose the length of
the reference cell to be $\delta=3.60\times 10^{-5}\; m$.  The initial
temperature of the tree is taken to be $T_{init}=0\degC$ throughout,
with the fiber water initially frozen and the vessel water in liquid
form owing to the freezing point depression effect.  There is no need to
explicitly incorporate this effect into the model since we are only
concerned with the thawing process, where the initial temperature is at
the melting point of pure water; hence, the only impact of freezing
point depression in this paper is on the initial conditions in that the
vessel water is initially in liquid form.  The outer boundary of the
tree stem is held equal to an ambient temperature $10\degC$ above the
freezing point, corresponding to $T_1(0.25,t)=\Tout=\Tcrit+10$, while
the symmetry condition $\partial_x T_1(0,t)=0$ is imposed at the center
of the domain.  All parameters that are common to the reduced model are
listed in Table~\ref{tab:params1} and their values remain the same,
whereas any new parameters introduced in the sap exudation model are
listed in Table~\ref{tab:params2}.

We remark that a Robin boundary condition (or convective heat transport
condition) is a more appropriate condition to impose at the outer tree
surface.  However, we have chosen to use a Dirichlet boundary condition
here because it is consistent with the homogenized problem derived in
Section~\ref{sec:problem1}.  The extension to the Robin condition is
actually quite straightforward to implement numerically, but it is a
major extension to our analysis and so we relegate it to future work.

\begin{table}
  \centering
  \caption{Constant parameter values and initial conditions used in the
    sap exudation model, with most values taken from \cite{AHORN5}.} 
  \label{tab:params2}
  \begin{tabular}{clcc}\hline
    \emph{Symbol} & \emph{Description} & \emph{Value} & \emph{Units}\\\hline
    $\delta$   & Length of reference cell & $3.60\times 10^{-5}$& $m$\\
    $\Rf$      & Fiber radius           & $3.5\times 10^{-6}$  & $m$\\
    $\Lv$      & Vessel length          & $5.0\times 10^{-4}$  & $m$\\
    $\Lf$      & Fiber length           & $1.0\times 10^{-3}$  & $m$\\  
    $\Vf$      & Fiber volume $=\pi({\Rf})^2 \Lf$  & $3.85\times 10^{-14}$ & $m^3$\\
    $\Vv$      & Vessel volume $=\delta^2 \Lv-\Vf$ & $6.10\times 10^{-13}$ & $m^3$\\
    $A$        & Fiber surface area $=2\pi \Rf \Lf$& $2.20\times 10^{-8}$  & $m^2$\\
    $W$        & Thickness of fiber/vessel wall & $3.64\times 10^{-6}$ & $m$\\
    $N$        & Number of fibers per vessel & 16              & \\
    $\Rtree$   & Tree radius                 & 0.25            & $m$ \\
    \hline
    $g$        & Gravitational acceleration  & 9.81            & $m/s^2$\\
    $\Henry$   & Henry's constant            & 0.0274          & ---\\
    $M_g$      & Molar mass of air           & 0.029           & $kg/mol$\\
    $\gasR$    & Gas constant                & 8.314           & $J/mol\, \degK$\\
    $\sigma_w$ & Water surface tension       & 0.076           & $N/m$\\
    $C_s$      & Vessel sugar concentration & 58.4  & $mol/m^3$\\
    $K$        & Wall hydraulic conductivity& $1.98\times 10^{-14}$& $m/s$\\
    $\Tcrit$   & Melting temperature              & 273.15 & $\degK$\\
    $\Tout$    & Ambient temperature $=\Tcrit+10$ & 283.15 & $\degK$\\
    \hline
    $T_{init}$ & $=\Tcrit$           & 273.15 & $\degK$\\
    $s_{iw}(0)$& $= \Rf$             & $3.5\times 10^{-6}$ & $m$\\
    $s_{gi}(0)$& $= {\Rf}/{\sqrt2}$  & $2.5\times 10^{-6}$ & $m$\\
    $r(0)$     &                     & $6.0\times 10^{-6}$ & $m$\\
    $U(0)$     & & 0                 & $m^3$\\
    $p^f_g(0)$ & & $2.0\times 10^5$  & $N/m^2$\\
    $p^v_g(0)$ & & $1.0\times 10^5$  & $N/m^2$\\
    $p^f_w(0)$ & & $9.89\times 10^4$ & $N/m^2$\\
    $p^v_w(0)$ & & $9.95\times 10^4$ & $N/m^2$\\
    \hline
  \end{tabular}
\end{table}

We now give a brief description of our numerical solution algorithm,
which solves the coupled system of governing equations using a split-step
approach that alternates between solving the microscale (reference cell)
and macroscale solutions.  First, the microscale problem consisting of
the differential-algebraic system \eqref{limit_problem_strong2} and
\eqref{cell_problem_maple} is solved using a finite element
discretization analogous to what we employed for the reduced problem in
Section~\ref{sec:sims-algo}. The temperature equation is
semi-discretized in space using linear Lagrange elements on an equally
spaced radial grid, and when coupled with \eqref{cell_problem_maple} the
resulting system of ODEs is integrated in time using the {\sf Matlab}
solver {\tt ode15s}.  In the second step, the macroscale equation
\eqref{limit_problem_strong1} is discretized similarly and the global
temperature is updated using values of the microscale variables just
computed.

After applying the method just described along with the initial and
boundary conditions stated earlier, the resulting solutions for $T_1$,
$s_{iw}$ and $s_{gi}$, $r$, $U$, $p_w^f$ and $p_w^v$ are illustrated in
Figure~\ref{fig:sap1} at a sequence of six times between 0 and
$2\;h$. In each plot, the horizontal ($x$) axis measures radial distance
from the center of the tree cross-section.
\begin{figure}
  \centering
  \footnotesize\itshape
  \begin{tabular}{ccc}
    (a) Temperature & \qquad & (b) Phase interfaces\\
    \includegraphics[width=0.40\textwidth]{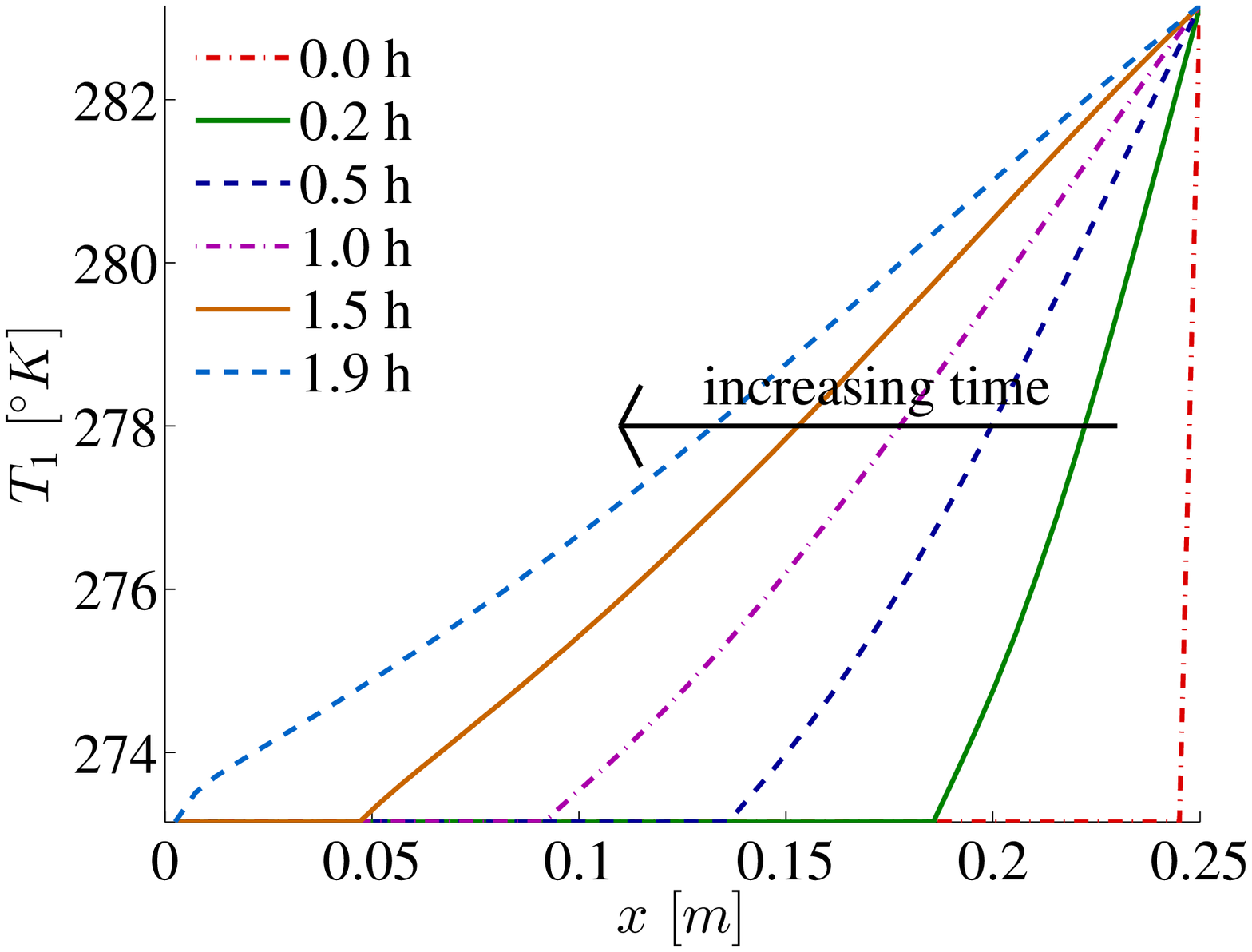}
    & &
    \includegraphics[width=0.40\textwidth]{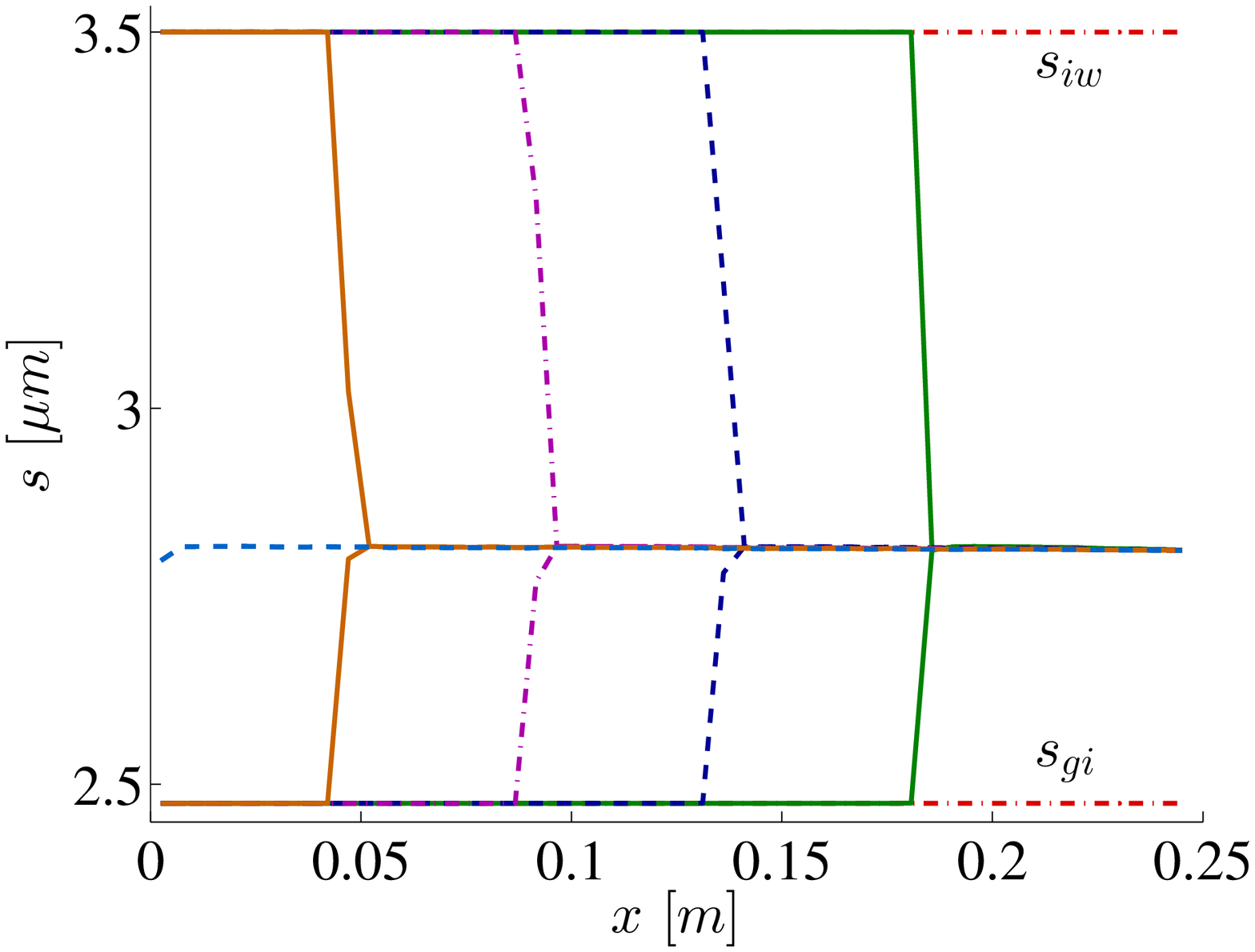}
    \\
    (c) Melt-water volume & & (d) Vessel bubble radius\\
    \includegraphics[width=0.40\textwidth]{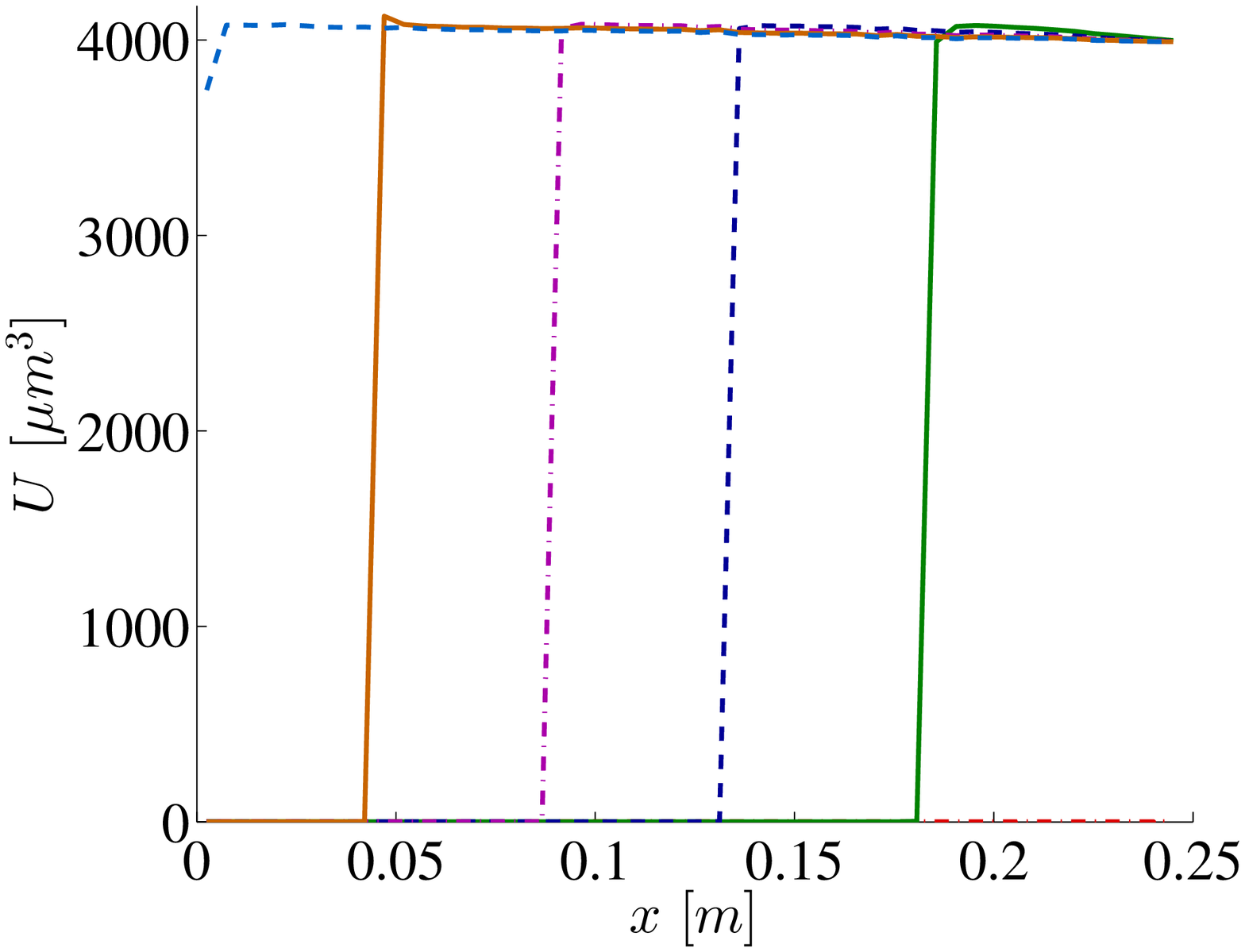}
    & &
    \includegraphics[width=0.40\textwidth]{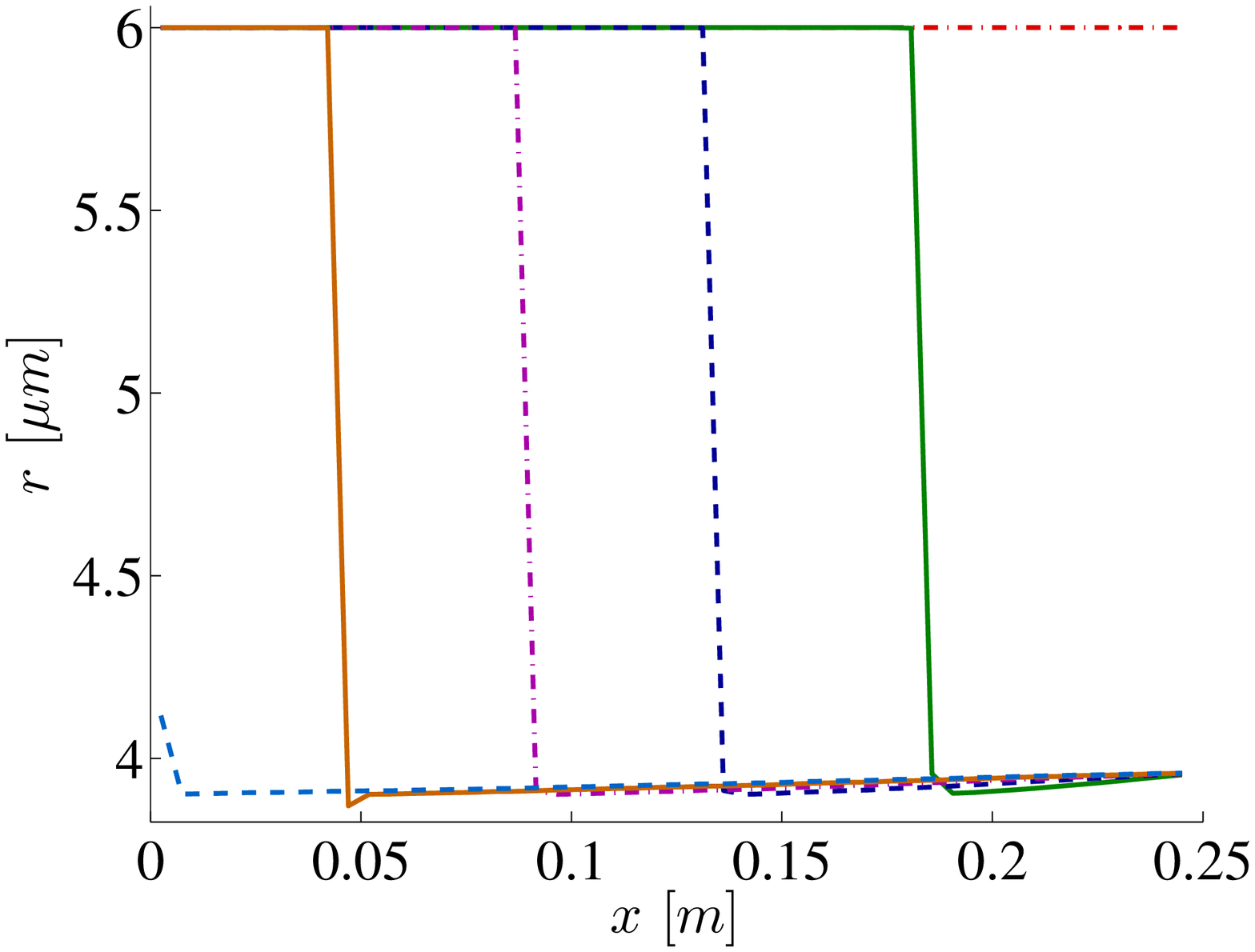}
    \\
    (e) Fiber water pressure & & (f) Vessel sap pressure\\
    \includegraphics[width=0.40\textwidth]{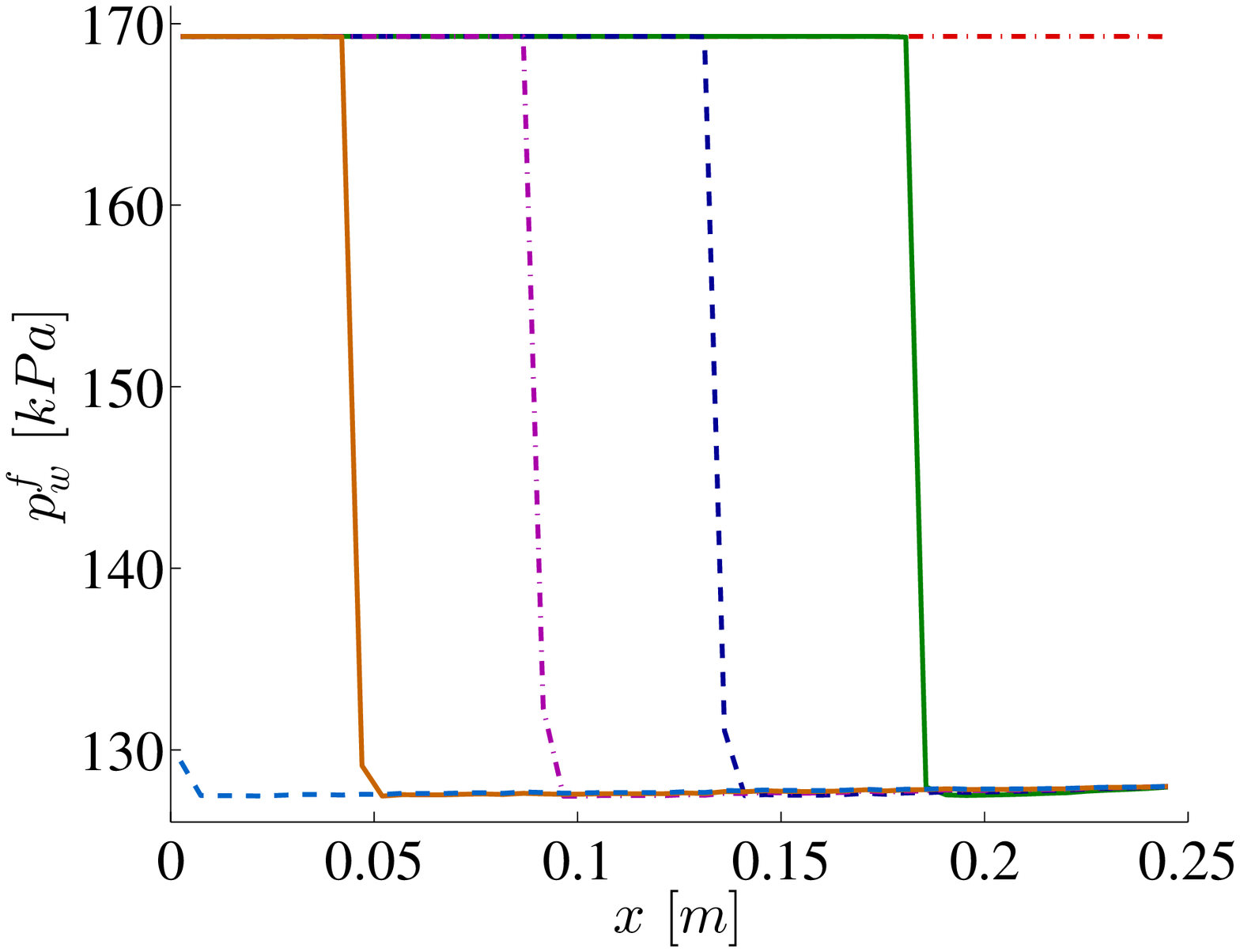}
    & &
    \includegraphics[width=0.40\textwidth]{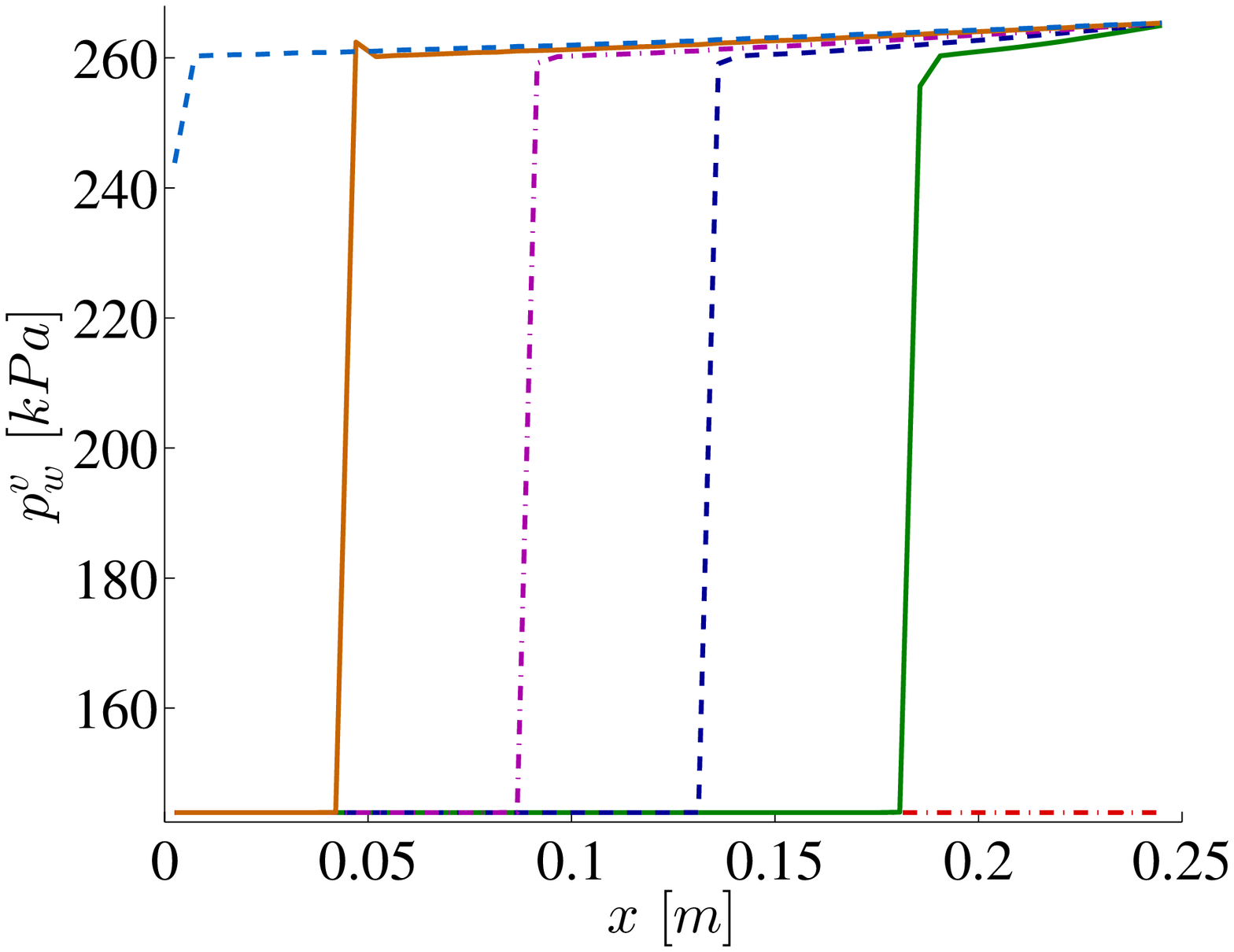}
  \end{tabular}
  \caption{Sap exudation simulations showing the solution profiles at a 
    sequence of time points.  In all cases, the profiles evolve from
    right to left as indicated by the arrow in (a).}
  \label{fig:sap1}
\end{figure}
From these plots, it is clear that the solution dynamics for all
variables appear as a \emph{melting front} that progresses through the
tree from outside to inside (from right to left in the plots) as the
warm ambient air gradually heats up the interior of the tree.
Furthermore, the time scale for complete melting of the ice contained in
the fibers is just under 2\;$h$.

The temperature profiles vary smoothly in space while other solution
quantities are characterized by a steep front that propagates toward the
centre of the tree at a speed that decreases with time.  The steepness
of the melting front derives from the rapid thawing of ice and
subsequent adjustment of sap between vessels and fibers on the
microscale, all of which occurs during the instant after the temperature
exceeds the melting point $\Tcrit$ at any given location.  The reason
for the gradual slowing of the melting front with time is that the heat
flux naturally decreases the closer the front is to the center of the
tree, which in turn leads to a speed decrease owing to the Stefan
condition.

There exists a wide separation in the time scales between the slow
evolution of the global temperature and the relatively rapid local phase
change and sap redistribution, which is clear from the plots in
Figure~\ref{fig:sap2} that show the time variation of the solution at
radial location $x=0.15\;m$.  These plots are consistent qualitatively
with results from simulations of the microscale sap thawing model
presented in \cite{AHORN5}.  We observe that the thickness of the fiber
ice layer, which corresponds to the vertical distance between the
$s_{iw}$ and $s_{gi}$ curves in Figure~\ref{fig:sap2}(b), rapidly drops
to zero as the ice melts.  At the same time, melt-water is driven from
fiber to vessel by the pressure stored in the fiber gas bubble, and the
pressure plot in Figure~\ref{fig:sap2}(c) clearly illustrates the
subsequent increase in $p^v_w$ that we attribute to {exudation
  pressure}.  After the melting process is complete, the vessel water
pressure continues to increase (at a much slower rate not easily visible
to the naked eye) owing to further increases in temperature and
consequent expansion of the gas in both fiber and vessel.
\begin{figure}
  \centering
  \footnotesize\itshape
  \begin{tabular}{ccc}
    (a) Temperature & (b) Phase interfaces & (c) Liquid pressures \\
    \includegraphics[width=0.30\textwidth]{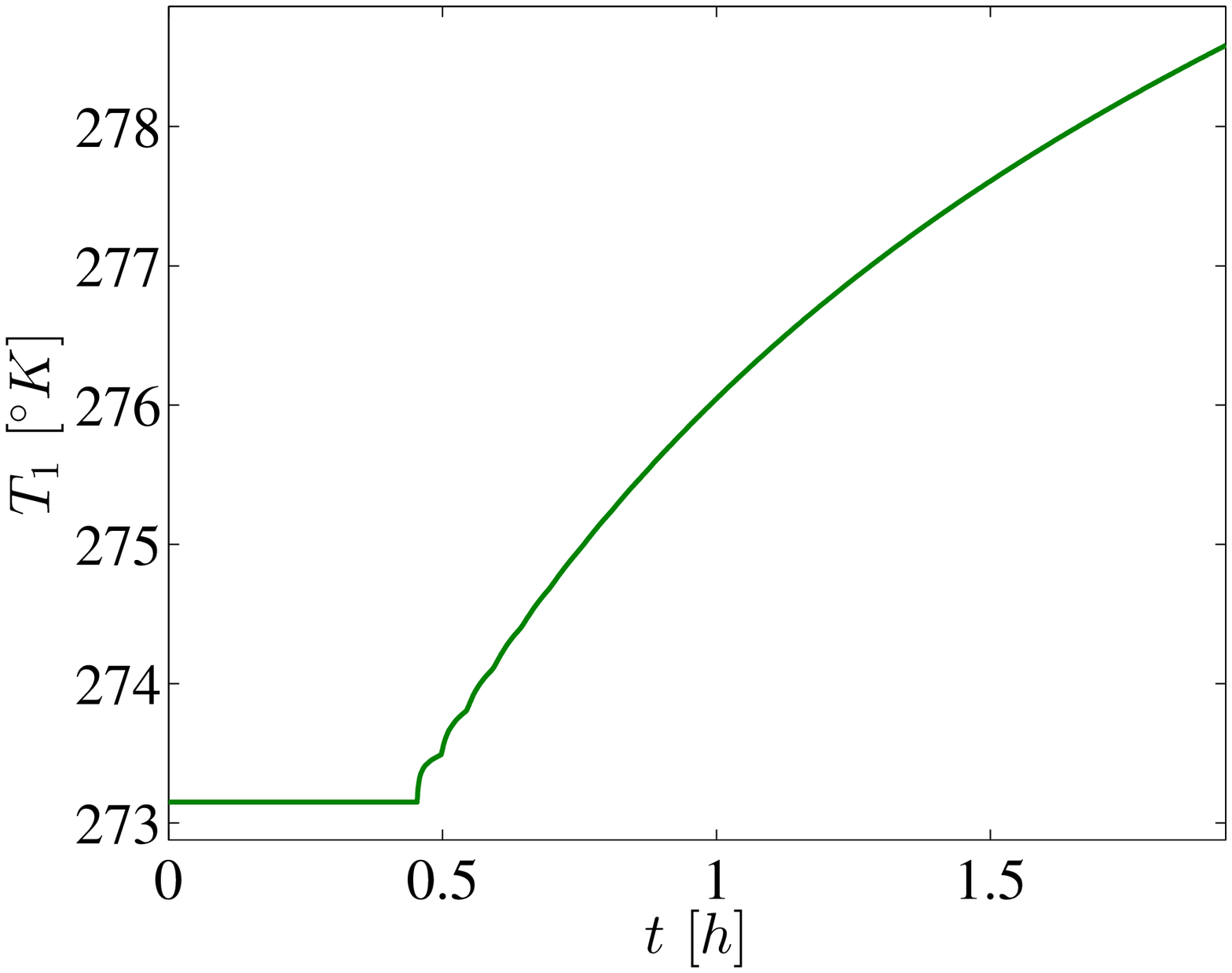} & 
    \includegraphics[width=0.30\textwidth]{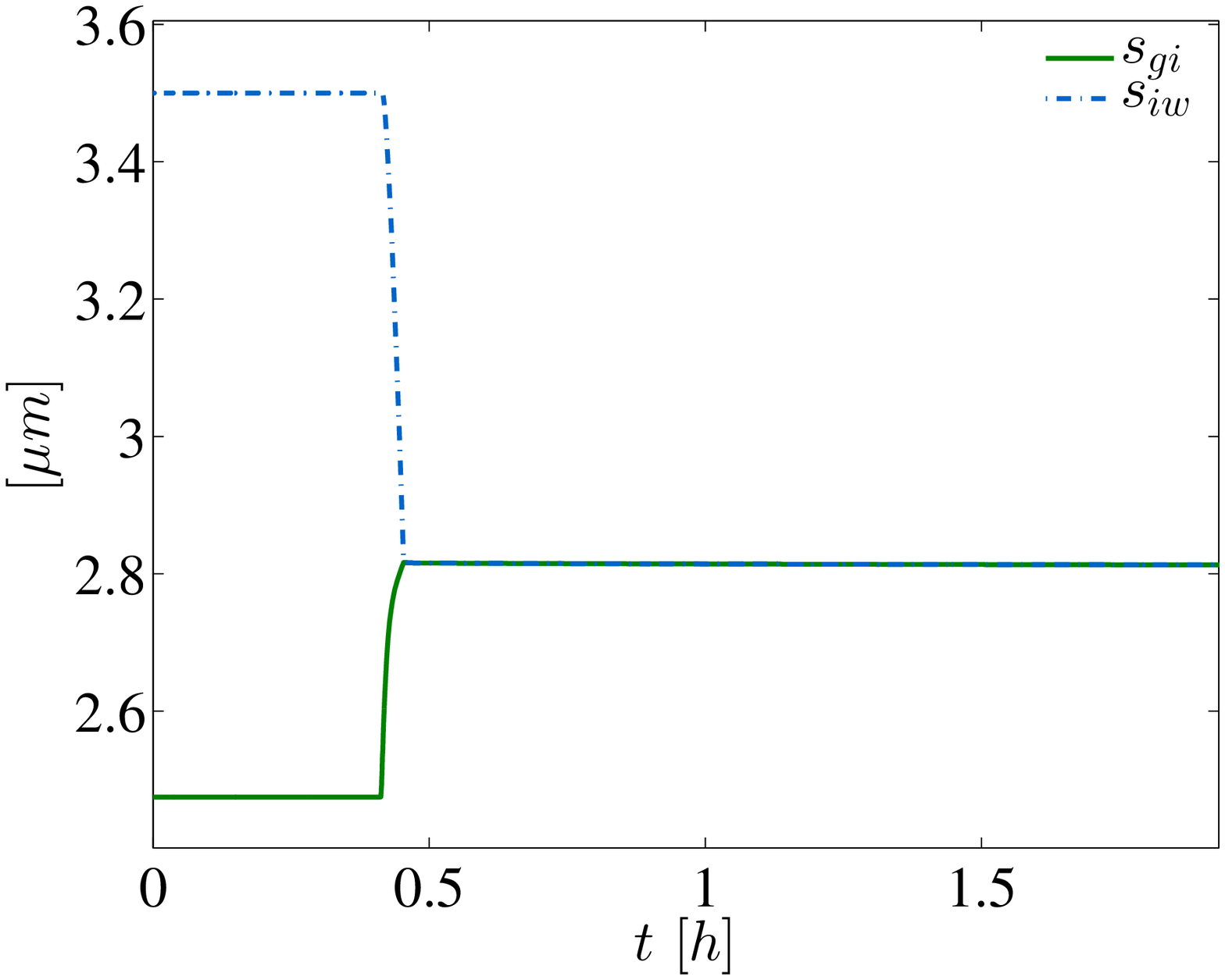} &
    \includegraphics[width=0.30\textwidth]{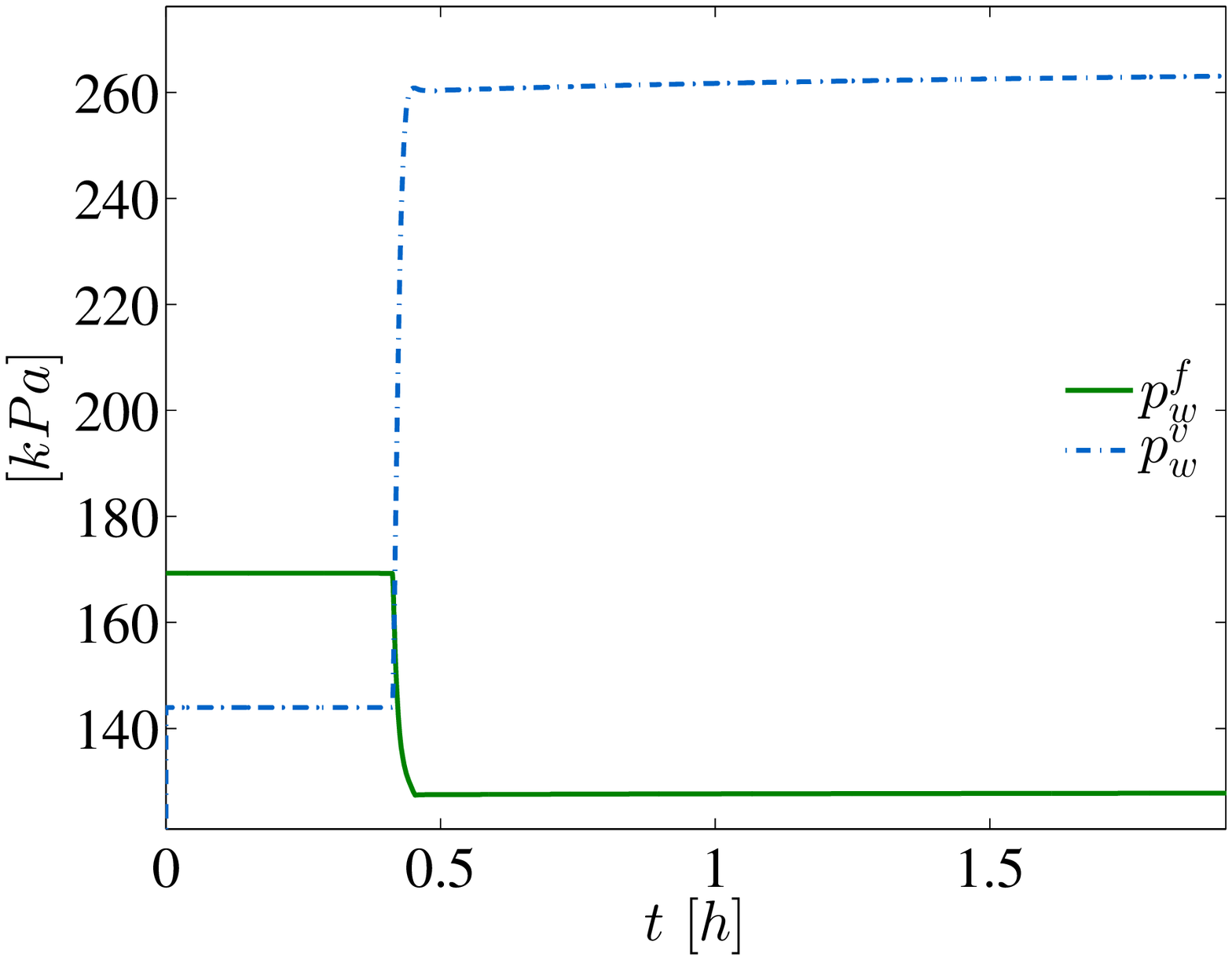} 
  \end{tabular}
  \caption{Sap exudation simulations, showing the time evolution of
    various solution components at radial position $x=0.15\;m$.}
  \label{fig:sap2}
\end{figure}

Upon more careful inspection of the gas/ice and ice/water interface
plots in Figure~\ref{fig:sap2}(b), we observe that there is a slight
time delay in the motion of $s_{iw}$ relative to $s_{gi}$.  Indeed, the
ice begins to melt at the gas/ice interface (leading to an increase in
$s_{gi}$) at a time roughly $25\;s$ in advance of when $s_{wi}$ starts
to drop, which is when a water layer appears between the ice and fiber
wall.  This phenomenon can be explained as follows.  When melt-water
first appears in a particular fiber, the gas bubble pressure is so high
that water is immediately forced out into the vessel, leaving the ice
layer in contact with the fiber wall.  The gas pressure then declines
until approximately $25\;s$ elapses, at which time the rate of water
melting exceeds that of the porous outflow and a water layer begins to
accumulate along the fiber wall.  By this time, roughly half of the
water volume contained in the fiber has been transferred into the
vessel.

One of the most important outputs of our sap exudation model is the
prediction that the vessel water pressure increases by roughly
$120\,kPa$ to a value of $p_w^v=263\;kPa$.  This is related to the
exudation pressure if were able to incorporate freezing effects and
simulate the tree over several freeze/thaw cycles.  It would therefore
be most interesting to compare this pressure increase to experimental
measurements of maple trees during the sap harvest season.

Finally, we draw a comparison between results from the sap exudation
problem for temperature $T_1$ and ice layer thickness $s_{iw}-s_{gi}$,
and the corresponding solutions we obtained for the reduced problem in
Section~\ref{sec:problem1}.  Although the shape of the temperature and
ice interface profiles are similar, there is a significant difference in
that the melting process for the reduced problem takes over 10 times
longer than for the sap exudation problem even though the macroscopic
domain and outer temperature are the same.  This discrepancy may seem at
first to be surprisingly large, but can be attributed to two causes.
First of all, the volume of ice to be melted is more than twice as large
in the reduced model since the fibers in the sap exudation problem
contain only an annular ring of ice and not a solid circle.  Secondly,
the diffusion coefficient for the sap exudation problem is roughly 10
times higher because of the much larger value of thermal diffusivity
($k/\rho c$) in the gas phase (with an upper bound of $2\times
10^{-5}\;m^2/s$ based on atmospheric conditions) compared with the
corresponding values for ice and water ($1.2\times10^{-6}$ and
$1.3\times 10^{-7}\;m^2/s$ respectively) which are the only phases
appearing in the reduced model.

\section{Conclusions}
\label{sec:discussion}

Our aim in this paper was two-fold: first to apply the techniques of
periodic homogenization to a two-phase flow problem involving a regular
array of melting ice bars that we called the ``reduced model''; and
second to apply the homogenized equations to study a much more complex
three-phase flow problem arising in the context of tree sap exudation.
The reduced model is an example of a well-known Stefan problem, and we
prove results on existence, uniqueness and a~priori estimates for the
weak form of the governing equations, which we then use to derive a
strong form of the homogenized limit problem.  The approach we employ
here has the advantage that it applies homogenization techniques in a
straightforward manner in order to obtain an uncomplicated limit model,
that in turn leads naturally to a simple numerical scheme.
Additionally, we are able to encapsulate all microscale processes
specific to the maple sap application within the domain $Y^2$, wherein
the temperature diffuses slowly.  We propose a numerical algorithm that
captures heat transport on the macroscale, while also incorporating the
phase change dynamics taking place on the microscale by solving a
corresponding reference cell problem at each point in the domain.

These homogenization equations are then applied to the study of maple
sap exudation, which is complicated by a number of extra physical
effects including the existence of a gas phase, osmotic pressure, and
porous flow between sapwood compartments.  The primary novelty in this
paper (relative to other work on homogenization of Stefan-type problems)
derives from our directly imposing the Dirichlet condition on
temperature at the phase interface, which gives rise to a decomposition
into fast/slow variables on the sub-regions $Y^1$ and $Y^2$.  The major
advantage of this approach is that the homogenized equations for the
macroscale temperature in the simpler reduced model apply equally well
to the sap exudation problem, whereas all remaining physical effects are
restricted to the reference cell problem in $Y^2$.  Moreover, this
decomposition leads immediately to a simple and efficient numerical
method that we demonstrate is capable of reproducing physically
realistic sap exudation pressures.

There are several natural possibilities for future work that arise from
this study.  In terms of gaining a complete understanding of the sap
exudation problem, it is essential to generalize the model to handle the
freezing half of the Milburn-O'Malley process, and then to simulate a
sequence of daily freeze/thaw cycles.  Only then will we be able to see
whether this process is capable of capturing the gradual build-up of
exudation pressure that is observed in experiments over a period of
several days~\cite{AHORN3}.  As part of this work, we will also
incorporate the effect of freezing point depression, implement a Robin
boundary condition on the tree circumference, and include the dynamics
of gas dissolution and nucleation.

It would also be interesting to consider the extension of our analytical
results in Section~\ref{sec:problem1} to handle the Robin type boundary
condition mentioned above.  This has been already done for an unrelated
application from T-cell signalling~\cite{graf-peter-2014}, and the
extension is straightforward but technically difficult.  The Robin
condition introduces an extra boundary term in
equation~\eqref{s_problem_weak_1} through the integration by parts
process, which in turn requires an additional one-dimensional
homogenization step along the boundary of the domain.


\section*{Appendices}
\label{appendix}

The following appendices contain proofs of the lemmas and theorems
introduced in Section~\ref{sec:problem1}.  Throughout, we use $\bigC$ or
$C_i$ to refer to a generic, real, positive constant whose value may
change from line to line.

\appendix

\section{Proofs of existence results}\label{app:existence}

We begin by stating and proving a Lemma~\ref{app:lemma} that is not
mentioned earlier in this paper but is required in the proof of
Lemma~\ref{lemma_existence}.

\begin{lemma}\label{app:lemma}
  Let $\nabla v\cdot \normvec\in \Hil^{-\half}(\Gameps)$ for a function
  $v\in \Hil^1(\Omeps)$. Then there exists a constant ${\bigC}>0$
  independent of $\eps$ such that 
  \begin{gather*}
    \eps\norm{\eps\nabla v\cdot \normvec}^2_{\Hil^{-\half}(\Gameps)}\leq
    {\bigC}\left(\norm{v}^2_{L^2(\Omeps)} + \norm{\eps\nabla
        v}^2_{L^2(\Omeps)}\right). 
  \end{gather*}
\end{lemma}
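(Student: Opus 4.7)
The plan is a classical rescaling-and-sum argument, reducing the scaled trace inequality on the $\eps$-periodic structure to the standard trace theorem on a fixed reference cell.

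First, I would decompose the connected components of $\Gamma_\eps$ as $\Gamma_\eps = \bigcup_k \eps(k+\Gamma)$ with $k$ ranging over those multi-indices for which $\eps(k+Y)\subset\Omega$. For each such $k$, rescale to the reference cell by setting $\tilde v(y)=v(\eps(k+y))$, which yields a function in $\Hil^1(Y^2)$ (recalling that $\Gamma=\partial Y^2$ from inside). The chain rule gives $\nabla_y\tilde v = \eps\,\nabla_x v$, and the standard trace theorem applied on the fixed Lipschitz domain $Y^2$ delivers a constant $\bigC$ depending only on $Y^2$ such that
\begin{gather*}
  \norm{\nabla_y\tilde v\cdot\normvec}_{\Hil^{-\half}(\Gamma)}
  \leq \bigC\,\norm{\tilde v}_{\Hil^1(Y^2)}.
\end{gather*}
Squaring this and using the elementary $L^2$/$H^1$ scaling
$\norm{\tilde v}_{L^2(Y^2)}^2 = \eps^{-n}\norm{v}_{L^2(\eps(k+Y^2))}^2$ and $\norm{\nabla_y\tilde v}_{L^2(Y^2)}^2 = \eps^{-n}\norm{\eps\nabla_x v}_{L^2(\eps(k+Y^2))}^2$ gives a uniform bound on $\norm{\nabla_y\tilde v\cdot\normvec}_{\Hil^{-\half}(\Gamma)}^2$ by $\eps^{-n}$ times the desired right-hand side, cell by cell.

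Next, I would transfer this estimate back to physical scale via the duality pairing: for any $\tilde\phi\in\Hil^{\half}(\eps(k+\Gamma))$ set $\phi(y)=\tilde\phi(\eps(k+y))\in\Hil^{\half}(\Gamma)$, and use the computed scalings of the $L^2$ and Gagliardo seminorm pieces of $\Hil^{\half}(\Gamma)$ under $y=x/\eps$ (surface measure $d\sigma_y=\eps^{1-n}d\sigma_x$; Gagliardo kernel $|y-y'|^{n}=\eps^{-n}|x-x'|^n$). Combining this with the corresponding scaling of the pairing $\int_\Gamma\nabla_y\tilde v\cdot\normvec\,\phi\,d\sigma_y$ and taking a supremum over $\tilde\phi$ yields the local estimate
\begin{gather*}
  \eps\norm{\eps\nabla v\cdot\normvec}_{\Hil^{-\half}(\eps(k+\Gamma))}^2
  \leq \bigC\Bigl(\norm{v}_{L^2(\eps(k+Y^2))}^2 + \norm{\eps\nabla v}_{L^2(\eps(k+Y^2))}^2\Bigr),
\end{gather*}
with $\bigC$ independent of $\eps$ and $k$. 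Finally, summing over $k$ and using additivity of both the $L^2$ norm over the disjoint cells and of $\norm{\cdot}_{\Hil^{-\half}}^2$ over the disjoint connected components of $\Gameps$ yields the claimed global inequality.

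The main obstacle is Step~3: the fractional order of the $\Hil^{\pm\half}(\Gameps)$ norms means one cannot simply scale a volume or surface $L^2$ integral. The Gagliardo seminorm and the $L^2$ contribution scale with different powers of $\eps$, so one must argue either through a scaled $\Hil^{\half}$ norm on the reference cell, or through interpolation between $L^2$ and $\Hil^1$. The factor $\eps$ on the left of the lemma is the precise outcome of reconciling these mixed scalings with the extra power of $\eps$ coming from the chain-rule replacement $\nabla_y=\eps\nabla_x$, and justifying it rigorously uniformly in $k$ is the only nontrivial bookkeeping in the proof.
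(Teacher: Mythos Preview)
Your proposal is correct and follows essentially the same route as the paper: reduce to the reference cell by rescaling, invoke the Neumann trace inequality on the fixed domain, compute the mixed $\eps$-scalings of the $L^2$ and Gagliardo parts of the $\Hil^{\half}$ norm, and pass to $\Hil^{-\half}$ by duality. The paper carries out the duality step by writing the $\Hil^{-\half}(\Gameps)$ norm as a single supremum over $\phi\in\Hil^{\half}(\Gameps)$ and decomposing the pairing cell-by-cell, whereas you first prove the local estimate on each $\eps(k+\Gamma)$ and then sum; both arguments hinge on exactly the bookkeeping you flag as the main obstacle.
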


\begin{proof}
  First we observe that for functions $v\in \Hil^1(Y)$, the Neumann
  trace mapping~\cite{PDE15} guarantees that
  \begin{gather*}
    \norm{\nabla v\cdot \normvec}_{\Hil^{-\half}(\Gamma)}\leq
    \bigC\left(\norm{u}_{L^2(Y)} + \norm{\nabla v}_{L^2(Y)}\right) 
  \end{gather*}
  for a constant $\bigC>0$.
  Next, let $w\in L^2(\Gameps)$ and $\phi\in \Hil^{\half}(\Gameps)$ and
  use the fact that $\Hil^{\half}\subset L^2\subset \Hil^{-\half}$ is a
  Gelfand triple to obtain
  \begin{align*}
    \langle w,\, \phi\rangle_{\Hil^{-\half}(\Gameps)\times
      \Hil^{\half}(\Gameps)} &= (w,\, \phi)_{L^2(\Gameps)} =
    \int_{\Gameps}w\phi \,\text{d}\sigma_x
    = \sum_{k\in K_\eps}\int_{\eps\Gamma}w(x)\phi(x)\,\text{d}\sigma_x\\
    &= \sum_{k\in K_\eps}\eps^{n-1}\int_\Gamma
    w(y)\phi(y)\,\text{d}\sigma_y\\
    &= \sum_{k\in K_\eps}\eps^{n-1}\langle
    w,\; \phi\rangle_{\Hil^{-\half}(\Gamma)\times
      \Hil^{\half}(\Gamma)}, 
  \end{align*}
  where $K_\eps$ refers to the cells of size $\eps$ in $\Omeps$
  and $y=\frac{x}{\eps}$.  Since $L^2(\Gameps)$ is dense in
  $\Hil^{-\half}(\Gameps)$, the identity also holds for $v\in
  \Hil^{-\half}(\Gameps)$ so that
  \begin{gather*}
    \langle v,\, \phi\rangle_{\Hil^{-\half}(\Gameps)\times
      \Hil^{\half}(\Gameps)} = \sum_{k\in K_\eps}\eps^{n-1}\langle
    v,\, \phi\rangle_{\Hil^{-\half}(\Gamma)\times \Hil^{\half}(\Gamma)}.
  \end{gather*}
  
  We can then show that for any $\phi\in \Hil^{\half}(\Gameps)$, 
  \begin{align*}
    \norm{\phi}^2_{\Hil^{\half}(\Gameps)} &=
    \int_{\Gameps}\phi(x)\,\text{d}\sigma_x +
    \int_{\Gameps}\int_{\Gameps}\frac{|\phi(x_1) -
      \phi(x_2)|^2}{|x_1-x_2|^n}
    \,\text{d}\sigma_{x_1}\,\text{d}\sigma_{x_2}\\ 
    &= \sum_{k\in
      K_\eps}\eps^{n-1}\int_{\Gamma}\phi(y)\,\text{d}\sigma_y +
    \eps^{2n-2}\int_\Gamma\int_\Gamma\frac{|\phi(y_1) -
      \phi(y_2)|^2}{|y_1-y_2|^n}
    \,\text{d}\sigma_{y_1}\,\text{d}\sigma_{y_2}\\ 
    &= \eps^{n-1}\norm{\phi}^2_{L^2(\Gamma)} +
    \eps^{2n-2}\abs{\phi}^2_{\Hil^{\half}(\Gamma)}. 
  \end{align*}
  Finally, we come to the main step of the proof in which we estimate
  \begin{align*}
    \norm{\eps\nabla_x v\cdot \normvec}_{\Hil^{-\half}(\Gameps)} &=
    \sup_{\phi\neq 0}\frac{\langle \eps\nabla_x v(x)\cdot
      \normvec,\; \phi(x)\rangle_{\Hil^{-\half}(\Gameps)\times
        \Hil^{\half}(\Gameps)}}{\norm{\phi}_{\Hil^{\half}(\Gameps)}}\\
    &= \sup_{\phi\neq 0}\sum_{k\in
      K_\eps}\frac{\eps^{n-1}\langle\nabla_y v(y)\cdot
      \normvec,\; \phi(y)\rangle_{\Hil^{-\half}(\Gamma)\times
        \Hil^{\half}(\Gamma)}}{\eps^{\half(n-1)}\norm{\phi}_{\Hil^{\half}(\Gamma)}
      + \eps^{n-1}\abs{\phi}_{\Hil^{\half}(\Gamma)}}\\
    &= \sup_{\phi\neq 0}\sum_{k\in
      K_\eps}\frac{\eps^{n-1}\norm{\nabla_y v\cdot
        \normvec}_{\Hil^{-\half}(\Gamma)}
      \norm{\phi}_{\Hil^{\half}(\Gamma)}}{\eps^{\half(n-1)}
      \norm{\phi}_{\Hil^{\half}(\Gamma)}
      + \eps^{n-1}\abs{\phi}_{\Hil^{\half}(\Gamma)}}\\
    &\leq \sup_{\phi\neq 0}\sum_{k\in K_\eps}\frac{\bigC\norm{v}_{L^2(Y)}
      + \bigC\norm{\nabla_y v}_{L^2(Y)}}{\eps^{\half(1-n)} +
      \frac{\abs{\phi}_{\Hil^{\half}(\Gamma)}}{\norm{\phi}_{\Hil^{\half}(\Gamma)}}}\\
    &\leq \frac{\sum_{k\in K_\eps}\bigC\left(\sqrt{\int_{\eps Y} v^2 \,
          \frac{\text{d}x}{\eps^n}} + \sqrt{\int_{\eps Y}
          \eps^2|\nabla_xv|^2 \,
          \frac{\text{d}x}{\eps^n}}\right)}{\eps^{\half(1-n)} + \bigC_1}\\
    &= \frac{\eps^{-\half n}\bigC\left(\norm{v}_{L^2(\Omeps)} +
        \norm{\eps\nabla_xv}_{L^2(\Omeps)}\right)}{\eps^{\half(1-n)}
      + \bigC_1}\\
    &\leq \eps^{-\half}\bigC\left(\norm{v}_{L^2(\Omeps)} +
      \norm{\eps\nabla_xv}_{L^2(\Omeps)}\right).
  \end{align*}
  Squaring both sides of the inequality yields the desired result.
\end{proof}

\replemma{lemma_existence}{%
  The operator $\A:V\rightarrow V'$ is
 \begin{enumerate}[label=\alph*)]
  \item continuous,
  \item bounded, and
  \item coercive, in the sense that there are real constants
    $\lambda,\alpha>0$ such that 
 \begin{gather*}
    \A(t,v)(v) + \lambda \norm{v}_H^2\geq \alpha\norm{v}^2_V
  \end{gather*}
  for almost every $v\in V$ and $t\in [0,\tend]$.
 \end{enumerate}
}

\begin{proof}
  \begin{enumerate}[label=\alph*)]
  \item To prove continuity we take any $\tilde{\eps}>0$, require that
    $\norm{u-\tilde{u}}_V<\delta$ for some $\delta>0$, and then show how
    to find $\delta$ so that $\norm{\A u-\A\tilde{u}}_{V'}<\tilde{\eps}$.
    We show that for every $v\in V$ with $\norm{v}_V\leq 1$,
    \begin{align*}
      (\A u - \A\tilde{u})v &= (D(u_1)\omega'(u_1)\nabla u_1 -
      D(\tilde{u}_1)\omega'(\tilde{u}_1)\nabla\tilde{u}_1,\; \nabla
      v_1)_{L^2(\Omega_\eps^1)} \\ 
      & \qquad + \eps^2\langle D(u_2)\omega'(u_2)\nabla u_2 -
      D(\tilde{u}_2)\omega'(\tilde{u}_2)\nabla \tilde{u}_2 ,\;
      v_1\rangle_{\Hil^{-\frac12}(\Gamma_\eps)\times
        \Hil^{\frac12}(\Gamma_\eps)} \\ 
      & \qquad + \eps^2(D(u_2)\omega'(u_2)\nabla u_2 -
      D(\tilde{u}_2)\omega'(\tilde{u}_2)\nabla\tilde{u}_2,\; \nabla
      v_2)_{L^2(\Omega_\eps^2)}\\ 
      & \leq \norm{D(u_1)\omega'(u_1) -
        D(\tilde{u}_1)\omega'(\tilde{u}_1)}_{L^\infty(\Omega_\eps^1)}\norm{\nabla
        u_1}_{L^2(\Omega_\eps^1)}\norm{\nabla v_1}_{L^2(\Omega_\eps^1)} \\
      & \qquad + \norm{D(\tilde{u}_1)
        \omega'(\tilde{u}_1)}_{L^\infty(\Omega_\eps^1)} \norm{\nabla u_1 -
        \nabla \tilde{u}_1}_{L^2(\Omega_\eps^1)} \norm{\nabla
        v_1}_{L^2(\Omega_\eps^1)}\\
      & \qquad + \eps^2\norm{D(u_2)\omega'(u_2) -
        D(\tilde{u}_2)\omega'(\tilde{u}_2)}_{L^\infty(\Gamma_\eps)}
      \norm{\nabla u_2\cdot \normvec}_{\Hil^{-\frac12}(\Gamma_\eps)}
      \norm{v_1}_{\Hil^{\frac12}(\Gamma_\eps)}
      \\
      & \qquad + \eps^2\norm{D(\tilde{u}_2)
        \omega'(\tilde{u}_2)}_{L^\infty(\Gamma_\eps)} \norm{\nabla u_2\cdot
        \normvec - \nabla\tilde{u}_2\cdot
        \normvec}_{\Hil^{-\frac12}(\Gamma_\eps)}
      \norm{v_1}_{\Hil^{\frac12}(\Gamma_\eps)}\\ 
      & \qquad + \eps^2\norm{D(u_2)\omega'(u_2) - D(\tilde{u}_2)
        \omega'(\tilde{u}_2)}_{L^\infty(\Omega_\eps^2)} \norm{\nabla
        u_2}_{L^2(\Omega_\eps^2)} \norm{\nabla
        v_2}_{L^2(\Omega_\eps^2)} \\
      & \qquad + \eps^2\norm{D(\tilde{u}_2)
        \omega'(\tilde{u}_2)}_{L^\infty(\Omega_\eps^2)} \norm{\nabla u_2 -
        \nabla \tilde{u}_2}_{L^2(\Omega_\eps^2)} \norm{\nabla
        v_2}_{L^2(\Omega_\eps^2)}.
    \end{align*}
    We next apply the trace inequality $\abs{\Omtwoeps}<\infty$ and use
    the fact that $\omega'$ and $D$ are continuous and bounded to
    conclude that there exists $\delta_{D}\leq\delta$ such that
    $\norm{D(u_i) \omega'(u_i) - D(\tilde{u}_i)
      \omega'(\tilde{u}_i)}_{L^\infty(\Omega_\eps^i)} < \delta$ and
    $\norm{D(u_i)\omega'(u_i) -
      D(\tilde{u}_i)\omega'(\tilde{u}_i)}_{L^\infty(\Gamma_\eps)} <
    \delta$ when $\norm{u_i-\tilde{u}_i}_V<\delta_{D}$, for $i=1,2$.  We
    may then make use of the fact that $\Norm{u_i-\tilde{u}_i}_V<\delta$
    and $\Norm{v}_V\leq 1$ to obtain
    \begin{align*}
      (\A u - \A\tilde{u})v &\leq \delta \left(\norm{\nabla
          u_1}_{L^2(\Omega_\eps^1)} + \norm{D(\tilde{u}_1)
          \omega'(\tilde{u}_1)}_{L^\infty(\Omega_\eps^1)} \right. \\
      & \qquad + \eps^2\norm{\nabla u_2\cdot
        \normvec}_{\Hil^{-\frac12}(\Gamma_\eps)} + \eps^2
      \norm{D(\tilde{u}_2)\omega'(\tilde{u}_2)}_{L^\infty(\Gamma_\eps)} \\
      & \qquad \left. + \eps^2\norm{\nabla u_2}_{L^2(\Omega_\eps^2)} +
        \eps^2 \norm{D(\tilde{u}_2)
          \omega'(\tilde{u}_2)}_{L^\infty(\Omega_\eps^2)} \right) \\
      & = \delta \alpha \leq \tilde{\eps}.
    \end{align*}
    The last inequality is true so long as $\delta <\frac{\tilde{\eps}}\alpha$ .
    
  \item To show that $\A$ is bounded, we prove that there exists a constant
    $\bigC>0$ such that 
    \begin{gather}
      \label{app-Auv}
      \A u(v)\leq \bigC \Norm{u}_V\Norm{v}_V \qquad\text{for any}\
      u,\;v\in V, 
    \end{gather}
    which we can derive using the follow sequence of estimates:
    \begin{align*}
      \A u(v) &\leq \max\{D\omega'\} \norm{\nabla
        u_1}_{L^2(\Omega_\eps^1)} \norm{\nabla v_1}_{L^2(\Omega_\eps^1)} +
      \eps^2 \max\{D\omega'\} \norm{\nabla u_2\cdot
        \normvec}_{\Hil^{-\frac12}(\Gamma_\eps)}
      \norm{v_1}_{\Hil^{\frac12}(\Gamma_\eps)}\\ 
      & \qquad + \eps^2\max\{D\omega'\} \norm{\nabla
        u_2}_{L^2(\Omega_\eps^2)} \norm{\nabla v_2}_{L^2(\Omega_\eps^2)}\\
      & \leq \max\{D\omega'\} \norm{\nabla u_1}_{L^2(\Omega_\eps^1)}
      \norm{\nabla v_1}_{L^2(\Omega_\eps^1)} + \eps^2\max\{D\omega'\}
      \norm{\nabla u_2}_{L^2(\Omega_\eps^2)} \norm{\nabla
        v_2}_{L^2(\Omega_\eps^2)}\\ 
      & \qquad + \max\{D\omega'\}c_0 \left(\norm{u_2}_{L^2(\Omega_\eps^2)}
        + \norm{\eps\nabla u_2}_{L^2(\Omega_\eps^2)}
      \right)\left(\norm{v_1}_{L^2(\Omega_\eps^1)} + \norm{\eps\nabla
          v_1}_{L^2(\Omega_\eps^1)}\right)\\ 
      & \leq \left( \max\{D\omega'\} + c_0\max\{D\omega'\} +
        \eps^2\max\{D\omega'\}\right) \norm{u}_V\norm{v}_V.
    \end{align*}
    Hence, the operator $\A$ is bounded as in \eqref{app-Auv} for every
    $0<\eps<1$ with the constant
    \begin{gather*}
      \bigC = \max\{D\omega'\}  + c_0\max\{D\omega'\}  + \max\{D\omega'\}.
    \end{gather*}
    
  \item For coercivity we proceed by deriving the following estimates for
    \begin{align*}
      \A v(v) + \lambda \norm{v}_H^2 &= (D(v_1)\omega'(v_1)\nabla v_1 ,\;
      \nabla v_1)_{\Omega_\eps^1} + \langle \eps^2D(v_2) \omega'(v_2)
      \nabla v_2\cdot \normvec ,\;
      v_1\rangle_{\Hil^{-\frac12}(\Gamma_\eps)
        \times\Hil^{\frac12}(\Gamma_\eps)} \\
      & \qquad + (\eps^2D(v_2)\omega'(v_2)\nabla v_2,\; \nabla
      v_2)_{\Omega_\eps^2} + \lambda\norm{v_1}^2_{\Omega_\eps^1} +
      \lambda\norm{v_2}^2_{\Omega_\eps^2} \\ 
      & \geq \min\{D\omega'\}\norm{\nabla v_1}^2_{\Omega_\eps^1} 
      - \max\{D\omega'\}c_0\left(\norm{v_2}^2_{\Omega_\eps^2} +
        \eps^2\norm{\nabla v_2}^2_{\Omega_\eps^2} +
        \norm{v_1}^2_{\Omega_\eps^1} + \eps^2\norm{\nabla
          v_1}^2_{\Omega_\eps^1} \right)\\ 
      & \qquad  + \min\{D\omega'\}\eps^2\norm{\nabla
        v_2}^2_{\Omega_\eps^2} + \lambda\norm{v_1}^2_{\Omega_\eps^1} +
      \lambda\norm{v_2}^2_{\Omega_\eps^2}\\ 
      & \geq \norm{v_1}^2_{\Omega_\eps^1}\left( \lambda  -
        \max\{D\omega'\}c_0 \right) + \norm{\nabla
        v_1}^2_{\Omega_\eps^1}\left(\min\{D\omega'\}   -
        \max\{D\omega'\}c_0\eps^2\right)\\ 
      & \qquad + \norm{v_2}^2_{\Omega_\eps^2}\left(\lambda -
        \max\{D\omega'\}c_0 \right) + \norm{\nabla v_2}^2_{\Omega_\eps^2}
      \eps^2\left( \min\{D\omega'\}  - \max\{D\omega'\}c_0 \right) \\
			& \geq \alpha\norm{v}^2_V,
    \end{align*}
    as required.  We have merged all constants from the second-to-last
    expression into a single constant $\alpha>0$, which is possible
    provided that $( \min\{D\omega'\} - \max\{D\omega'\}c_0 ) > 0$, and
    after choosing both $\eps$ small enough and $\lambda$ big enough. We
    also applied Lemma~\ref{app:lemma} in the derivation of the second
    inequality above.
  \end{enumerate}
\end{proof}

\section{Proofs of results related to a~priori estimates}\label{app:estimates}

\replemma{lemma_estimates}{%
Here, $C$ or $C_i$ represents a positive constant that is independent of $\eps$. 
  \begin{enumerate}[label=\alph*)]
  \item  There exists a constant $\bigC_1$ such that  
    \begin{gather*}
      \Norm{\Toneeps}^2_{\Omoneeps} + \Norm{\nabla
        \Toneeps}^2_{\Omoneeps} + \Norm{\Ttwoeps}^2_{\Omtwoeps}
      + \eps^2\Norm{\nabla \Ttwoeps}^2_{\Omtwoeps} \leq \bigC_1 . 
    \end{gather*}
    
  \item There exists a constant $\bigC_2$ such that 
    \begin{multline*}
      \Norm{\nabla \HEoneeps}^2_{\Omoneeps} +
      \Norm{\nabla\cdot[D(\HEoneeps)\nabla \Toneeps]}^2_{\Omoneeps,t} +
      \eps^2\Norm{\nabla \HEtwoeps}^2_{\Omtwoeps} + 
      \eps^4\Norm{\nabla\cdot[D(\HEtwoeps)\nabla
        \Ttwoeps]}^2_{\Omtwoeps,t} \leq \bigC_2.
    \end{multline*}

  \item There exists a constant $\bigC_3$ such that 
    \begin{gather*}
      \eps^3\Norm{D(\HEtwoeps)\nabla \Ttwoeps}^2_{\Gameps,t} \leq
      \bigC_3. 
    \end{gather*}

    
  \item The functions $\HEoneeps$, $\HEtwoeps$ are nonnegative for
    almost every $x\in\Omoneeps$, $\Omtwoeps$ as 
    long as $\HEoneeps(0)$, $\HEtwoeps(0)$ are nonnegative
    (respectively). 
    
  \item There exists a constant $\bigC_4$ such that  
    \begin{gather*}
      \Norm{\HEoneeps}_{L^\infty(\Omoneeps)} +
      \Norm{\HEtwoeps}_{L^\infty(\Omtwoeps)} \leq \bigC_4. 
    \end{gather*}
    
\item  There exists a constant $\bigC_5$ such that 
  \begin{gather*}
    \Norm{\partial_t \HEoneeps}_{L^2([0,\tend],L^2(\Omoneeps))} +
    \Norm{\partial_t \HEtwoeps}_{L^2([0,\tend],L^2(\Omtwoeps))}
    \leq \bigC_5. 
  \end{gather*}
\end{enumerate}
}

\begin{proof}
  \begin{enumerate}[label=\alph*)]
  \item We test equation~\eqref{s_problem_weak} with
    $(\Toneeps,\Ttwoeps)$ to obtain
    \begin{align*}
      (\partial_t\HEoneeps,\; \Toneeps)_{\Omoneeps} +
      (D(\HEoneeps)\nabla \Toneeps,\; \nabla \Toneeps)_{\Omoneeps} +
      \eps^2\langle D(\HEtwoeps)\nabla \Ttwoeps\cdot \normvec,\;
      \Toneeps\rangle_{\Gamma_\eps} &= 0,\\ 
      (\partial_t\HEtwoeps,\; \Ttwoeps)_{\Omtwoeps} +
      \eps^2(D(\HEtwoeps)\nabla \Ttwoeps,\; \nabla \Ttwoeps)_{\Omtwoeps}
      - \eps^2\langle D(\HEtwoeps)\nabla \Ttwoeps\cdot \normvec,\;
      \Toneeps\rangle_{\Gamma_\eps} &= 0, 
    \end{align*}
    where we have the boundary term in the second equation because
    $\Ttwoeps=\Toneeps$ is not zero at $\Gamma_\eps$.  The two equations
    may then be added to eliminate the boundary terms, after which we
    rewrite the enthalpy variables in terms of temperature using chain
    rule: 
    \begin{multline*}
      ((\omega^{-1})'\partial_t\Toneeps,\; \Toneeps)_{\Omoneeps} +
      \min\{D\}(\nabla \Toneeps,\; \nabla \Toneeps)_{\Omoneeps} +
      ((\omega^{-1})'\partial_t\Ttwoeps,\; \Ttwoeps)_{\Omtwoeps}
      \\
      + \eps^2\min\{D\}(\nabla \Ttwoeps,\; \nabla \Ttwoeps)_{\Omtwoeps} 
      = 0 .
    \end{multline*}
    Next, using that $(\omega^{-1})'$ and $D$ are both bounded and
    positive, we can integrate in time to obtain
    \begin{multline*}
      \half\min\{(\omega^{-1})'\}\norm{\Toneeps}^2_{\Omoneeps}
      + \min\{D\}\norm{\nabla \Toneeps}^2_{\Omoneeps,t} +
      \half\min\{(\omega^{-1})'\}\norm{\Ttwoeps}^2_{\Omtwoeps}
      + \min\{D\}\norm{\nabla \Ttwoeps}^2_{\Omtwoeps,t}\\ 
      \leq
      \half\max\{(\omega^{-1})'\}\norm{\Toneeps(0)}^2_{\Omoneeps}
      +
      \half\max\{(\omega^{-1})'\}\norm{\Ttwoeps(0)}^2_{\Omtwoeps} .
    \end{multline*}
    Since the initial conditions are bounded, we obtain by 
    merging the constants that
    \begin{align*}
      \norm{\Toneeps}^2_{\Omoneeps} + \norm{\nabla
        \Toneeps}^2_{\Omoneeps} + \norm{\Ttwoeps}^2_{\Omtwoeps} +
      \eps^2\norm{\nabla \Ttwoeps}^2_{\Omtwoeps} \leq \bigC_1 ,
    \end{align*}
    for a constant $\bigC_1>0$ that is independent of $\eps$. With
    $\omega$ and $\omega^{-1}$ being continuous and bounded, and with
    $\HEoneeps = \omega^{-1}(\Toneeps)$ and
    $\HEtwoeps=\omega^{-1}(\Ttwoeps)$, it follows that
    $\norm{\HEoneeps}^2_{\Omoneeps} + \norm{\HEtwoeps}^2_{\Omtwoeps}
    \leq \bigC_1$.
    
  \item We test equation~\eqref{s_problem_weak} with
    $(\nabla\cdot[D(\HEoneeps)\nabla \Toneeps],
    \eps^2\nabla\cdot[D(\HEtwoeps)\nabla \Ttwoeps])$, which can be made
    rigorous as in Lemma~7.23 of \cite{PDE14}:
    \begin{align*}
      (\partial_t\HEoneeps,\; \nabla\cdot[D(\HEoneeps)\nabla
      \Toneeps])_{\Omoneeps}  - \norm{\nabla\cdot[D(\HEoneeps)\nabla
        \Toneeps]}^2_{\Omega_\eps^ 1} &= 0,\\ 
      \eps^2(\partial_t\HEtwoeps,\; \nabla\cdot[D(\HEtwoeps)\nabla
      \Ttwoeps])_{\Omtwoeps} -
      \eps^4\norm{\nabla\cdot[D(\HEtwoeps)\nabla
        \Ttwoeps]}^2_{\Omtwoeps} &= 0. 
    \end{align*}
    Integrating by parts in the first term of each equation yields
    \begin{align*}
      -(\partial_t\nabla \HEoneeps,\; D(\HEoneeps)\nabla
      \Toneeps)_{\Omoneeps} - \eps^2\langle \partial_t\HEoneeps,\;
      D(\HEtwoeps)\nabla \Ttwoeps\cdot \normvec\rangle_{\Gamma_\eps} -
      \norm{\nabla\cdot[D(\HEoneeps)\nabla \Toneeps]}^2_{\Omega_\eps^ 1}
      &= 0,\\ 
      -\eps^2(\partial_t\nabla \HEtwoeps,\; D(\HEtwoeps)\nabla
      \Ttwoeps)_{\Omtwoeps} + \eps^2\langle \partial_t\HEoneeps,\;
      D(\HEtwoeps)\nabla \Ttwoeps\cdot \normvec\rangle_{\Gamma_\eps} -
      \eps^4\norm{\nabla\cdot[D(\HEtwoeps)\nabla
        \Ttwoeps]}^2_{\Omtwoeps} &= 0, 
    \end{align*}
    where we have also applied the boundary conditions on
    $\Gamma_\eps$. Adding the last two equations together and swapping
    sign yields 
    \begin{multline*}
      (\partial_t\nabla \HEoneeps,\;
      D(\HEoneeps)\omega'(\HEoneeps)\nabla \HEoneeps)_{\Omoneeps} +
      \norm{\nabla\cdot[D(\HEoneeps)\nabla \Toneeps]}^2_{\Omoneeps}\\ 
      + \eps^2(\partial_t\nabla \HEtwoeps,\;
      D(\HEtwoeps)\omega'(\HEtwoeps) \nabla \HEtwoeps)_{\Omtwoeps} +
      \eps^4\norm{\nabla\cdot[D(\HEtwoeps)\nabla
        \Ttwoeps]}^2_{\Omtwoeps} = 0 .
    \end{multline*}
    We then integrate with respect to time and use the fact that
    $\omega'$ and $D$ are bounded and positive to obtain  
    \begin{align*}
      \half\min\{D\omega'\}\norm{\nabla
        \HEoneeps}^2_{\Omoneeps} &+
      \norm{\nabla\cdot[D(\HEoneeps)\nabla \Toneeps]}^2_{\Omoneeps,t} +
      \frac{\eps^2}{2}\textstyle\min\{D\omega'\}\norm{\nabla
        \HEtwoeps}^2_{\Omtwoeps} +
      \eps^4\norm{\nabla\cdot[D(\HEtwoeps)\nabla
        \Ttwoeps]}^2_{\Omtwoeps,t} \\ 
      &\leq \half\max\{D\omega'\}\norm{\nabla
        \HEoneeps(0)}^2_{\Omoneeps} +
      \frac{\eps^2}{2}\max\{D\omega'\}\norm{\nabla
        \HE_{2,\eps,t}(0)}^2_{\Omtwoeps}\\ 
      &\leq C,
    \end{align*}
    where we combined the smooth and bounded initial conditions into a
    single constant $C$.  Merging all constants into a single
    $\bigC_2>0$ yields 
    \begin{gather*}
      \norm{\nabla \HEoneeps}^2_{\Omoneeps} +
      \norm{\nabla\cdot[D(\HEoneeps)\nabla \Toneeps]}^2_{\Omoneeps,t} +
      \eps^2\norm{\nabla \HEtwoeps}^2_{\Omtwoeps} +
      \eps^4\norm{\nabla\cdot[D(\HEtwoeps)\nabla
        \Ttwoeps]}^2_{\Omtwoeps,t} \leq \bigC_2.
    \end{gather*}
    
  \item The proof is a straightforward application of parts a) and b)
    along with the trace inequality, which yield 
    \begin{align*}
      \eps^3\norm{D(\HEtwoeps)\nabla \Ttwoeps}^2_{\Gamma_\eps,t} \leq
      \eps^2\norm{D(\HEtwoeps)\nabla \Ttwoeps}^2_{\Omtwoeps,t} +
      \eps^4\norm{\nabla\cdot[D(\HEtwoeps)\nabla
        \Ttwoeps]}^2_{\Omtwoeps,t} 
      \leq \bigC_3.
    \end{align*}
    
    
  \item We test equation~\eqref{s_problem_weak} with
    $(\HE_{1,\eps-}, \ \HE_{2,\eps-})$, where $\HE_{1,\eps-} =
    -\HEoneeps$ if $\HEoneeps\leq0$ pointwise and $\HE_{1,\eps-}=0$
    otherwise; an analogous definition is used for $\HE_{2,\eps-}$.  We
    then find that
    \begin{align*}
      (\partial_t \HE_{1,\eps-},\; \HE_{1,\eps-})_{\Omoneeps} +
      (D(\HE_{1,\eps-})\nabla \Toneeps,\; \nabla
      \HE_{1,\eps-})_{\Omoneeps} + \eps^2\langle D(\HEtwoeps\nabla
      \Ttwoeps,\; \HE_{1,\eps-}\rangle_{\Gamma_\eps} &= 0,\\ 
      (\partial_t \HE_{2,\eps-},\; \HE_{2,\eps-})_{\Omtwoeps} +
      \eps^2(D(\HE_{2,\eps-})\nabla \Ttwoeps,\; \nabla
      \HE_{2,\eps-})_{\Omtwoeps} - \eps^2\langle D(\HEtwoeps\nabla
      \Ttwoeps,\; \HE_{1,\eps-}\rangle_{\Gamma_\eps} &= 0, 
    \end{align*} 
    where we have used that $\HE_{2,\eps-} = \HE_{1,\eps-}$ on
    $\Gamma_\eps$.  Add the two equations above, substitute
    $\Ttwoeps=\omega(\HEtwoeps)$ and $\Toneeps=\omega(\HEoneeps)$, and
    then integrate over the time interval $[0,t]$ to obtain
    \begin{align*}
      \textstyle \frac12 \norm{\HE_{1,\eps-}}^2_{\Omoneeps} +
      \min\{D\omega'\}\norm{\nabla \HE_{1,\eps-}}^2_{\Omoneeps,t} +
      \half\norm{\HE_{2,\eps-}}^2_{\Omtwoeps} +
      \eps^2\min\{D\omega'\}\norm{\nabla \HE_{2,\eps-}}^2_{\Omtwoeps,t}
      \leq 0, 
    \end{align*}
    where we assumed that the initial conditions are nonnegative.  This
    yields that $\HEoneeps, \HEtwoeps\geq 0$ almost everywhere, from
    which we can conclude that $\HEoneeps$, $\HEtwoeps$ are bounded from
    below in $L^2([0,\tend],L^\infty(\Omtwoeps))$, independently of
    $\eps$.
        
  \item Let $M(t) = \max\{\norm{\HEoneeps(0)}_{L^\infty(\Omoneeps)},
    \norm{\HEtwoeps(0)}_{L^\infty(\Omtwoeps)}\} e^{kt}$ for some
    constant $k\in\R$. We test equation~\eqref{s_problem_weak} with
    $((\HEoneeps - M)_+,(\HEtwoeps - M)_+)$, where $(\HEoneeps-M)_+ =
    \HEoneeps-M$ if $\HEoneeps\leq M$ pointwise and $(\HEoneeps-M)_+ =
    0$ otherwise; an analogous definition is used for
    $(\HEtwoeps-M)_+$. We then obtain that
    \begin{align*}
      (\partial_t\HEoneeps,\; (\HEoneeps - M)_+)_{\Omoneeps} +
      (D(\HEoneeps)\nabla \Toneeps,\; \nabla(\HEoneeps -
      M)_+)_{\Omoneeps} + \eps^2\langle D(\HEtwoeps)\nabla \Ttwoeps,\;
      (\HEoneeps-M)_+\rangle_{\Gamma_\eps} &= 0,\\ 
      (\partial_t\HEtwoeps,\; (\HEtwoeps - M)_+)_{\Omtwoeps} +
      \eps^2(D(\HEtwoeps)\nabla \Ttwoeps,\; \nabla(\HEtwoeps -
      M)_+)_{\Omtwoeps} - \eps^2\langle D(\HEtwoeps)\nabla \Ttwoeps,\;
      (\HEoneeps-M)_+\rangle_{\Gamma_\eps} &= 0, 
    \end{align*}
    where $(\HEoneeps - M)_+  = (\HEtwoeps - M)_+$ on $\Gamma_\eps$. 
    Next, we subtract $\partial_tM=kM$ from the first inner product term
    in both equations and the corresponding term from the right-hand
    side. Then both equations are added together so that the
    boundary-term cancels, leaving
    \begin{multline*}
      (\partial_t(\HEoneeps - M)_+,\; (\HEoneeps - M)_+)_{\Omoneeps} +
      (D(\HEoneeps)\omega'(\HEoneeps)\nabla \HEoneeps,\;
      \nabla(\HEoneeps-M)_+)_{\Omoneeps}\\ 
      + (\partial_t(\HEtwoeps - M)_+,\; (\HEtwoeps - M)_+)_{\Omtwoeps} +
      \eps^2(D(\HEtwoeps\omega'(\HE_{2\eps})\nabla \HEtwoeps,\; \nabla
      (\HEtwoeps - M)_+)_{\Omtwoeps}\\ 
      = (-kM,\; (\HEoneeps-M)_+)_{\Omoneeps} +  (-kM,\;
      (\HEtwoeps-M)_+)_{\Omtwoeps}. 
    \end{multline*}
    Because $\nabla M = 0$, we may subtract $\nabla M$ from the second
    term, while at the same time estimating $D\omega'$ to obtain
    \begin{multline*}
      (\partial_t(\HEoneeps - M)_+,\; (\HEoneeps - M)_+)_{\Omoneeps} +
      \min\{D\omega'\}(\nabla (\HEoneeps-M),\;
      \nabla(\HEoneeps-M)_+)_{\Omoneeps}\\ 
      + (\partial_t(\HEtwoeps - M)_+,\; (\HEtwoeps - M)_+)_{\Omtwoeps} +
      \eps^2\min\{D\omega'\}(\nabla (\HEtwoeps-M),\; \nabla (\HEtwoeps -
      M)_+)_{\Omtwoeps}\\ 
      = (-kM,\; (\HEoneeps-M)_+)_{\Omoneeps} +  (-kM,\;
      (\HEtwoeps-M)_+)_{\Omtwoeps}. 
    \end{multline*}
    Finally, integrating with respect to time yields
    \begin{multline*}
      \half\norm{(\HEoneeps-M)_+}^2_{\Omoneeps}
      +\min\{D\omega'\}\norm{\nabla(\HEoneeps-M)_+}^2_{\Omoneeps,t} \\ 
      + \half\norm{(\HEtwoeps-M)_+}^2_{\Omtwoeps} +
      \eps^2\min\{D\omega'\}\norm{\nabla(\HEtwoeps -
        M)_+}^2_{\Omtwoeps,t}\\ 
      \leq -(kM,\; (\HEoneeps - M)_+)_{\Omoneeps,t} - (kM,\; (\HEtwoeps
      - M)_+)_{\Omtwoeps,t} \leq 0,
    \end{multline*}
    where we used that the initial conditions are smaller that
    $M$. Hence, we find that $\norm{(\HEoneeps - M)_+}^2_{\Omoneeps} +
    \norm{(\HEtwoeps-M)_+}^2_{\Omtwoeps}\leq 0$ which means
    $\HEoneeps\leq M$ and $\HEtwoeps\leq M$ almost everywhere. 
        
    
  \item  We consider equations \eqref{s_problem_weak} before integration by parts
    \begin{align*}
      \partial_t\HEoneeps &= \nabla\cdot [D(\HEoneeps)\nabla
      \Toneeps]\qquad \text{in}\ \Omoneeps,\\ 
      \partial_t\HEtwoeps &= \nabla\cdot [D(\HEtwoeps)\nabla
      \Ttwoeps]\qquad \text{in}\ \Omtwoeps, 
    \end{align*}
    and apply the $L^2$-norm over $[0,\tend]\times\Omoneeps$ and
    $[0,\tend]\times\Omtwoeps$, respectively.  Then we can use item b)
    of this Lemma to obtain
    \begin{align*}
      \norm{\partial_t\HEoneeps}_{\Omoneeps,t} = \norm{\nabla\cdot
        [D(\HEoneeps)\nabla \Toneeps]}_{\Omoneeps,t} \leq \bigC_5,\\ 
      \norm{\partial_t\HEtwoeps}_{\Omtwoeps,t} = \norm{\nabla\cdot
        [D(\HEtwoeps)\nabla \Ttwoeps]}_{\Omtwoeps,t} \leq \bigC_5, 
    \end{align*}
    where constant $\bigC_5$ is independent of $\eps$.
  \end{enumerate}
\end{proof}

\section{Proof of uniqueness}\label{app:uniqueness}

\reptheorem{uniqueness}{%
  Equations~\eqref{s_problem_limit_1} have at most one solution given by 
  \begin{alignat*}{3}
    T_1 & \in \V^1(\Omega) + \Tout &&= 
		L^2([0,\tend], \Hil^1_0(\Omega)) \cap 
		\Hil^1([0,\tend], L^2(\Omega)) + \Tout, \\
    T_2 &\in \V^2(\Omega\times \Ytwo) +T_1 &&= 
		L^2([0,\tend],L^2(\Omega,\Hil^1_\#(\Ytwo))) \cap 
		\Hil^1([0,\tend],L^2(\Omega\times \Ytwo)) + T_1,
  \end{alignat*}
  where $T_1 = \omega(\HE_1)$ and $T_2 = \omega(\HE_2)$.
}

\begin{proof}
  First, we note that the cell problem \eqref{cell_problem1} has a
  unique solution, which is proven in \cite{TWOSCALE26}. Hence, we
  will only prove uniqueness of the macroscopic problem by assuming that
  there are two solutions $(\HE_{1,a},T_{1,a},\HE_{2,a},T_{2,a})$ and
  $(\HE_{1,b},T_{1,b},\HE_{2,b},T_{2,b})$, and then showing that they
  are equal. We start by substituting our two solutions into the second
  equation of \eqref{s_problem_limit_1}, subtracting, and then
  testing with the functions $\HE_{1,a} - \HE_{1,b}$ and $\HE_{2,a} -
  \HE_{2,b}$:
  \begin{align*}
    \abs{Y^1}(\partial_t\HE_{1,a}-\partial_t\HE_{1,b}&,\; 
    \HE_{1,a}-\HE_{1,b})_\Omega + \Pi (D(\HE_{1,a})\nabla T_{1,a} -
    D(\HE_{1,b})\nabla T_{1,b},\; \nabla \HE_{1,a}-\nabla
    \HE_{1,b})_\Omega \\ 
    & \qquad\qquad + (D(\HE_{2,a})\nabla T_{2,a}\cdot \normvec -
    D(\HE_{2,b})\nabla T_{2,b}\cdot \normvec,\; \HE_{1,a}-
    \HE_{1,b})_{\Omega\times\Gamma} = 0,\\ 
    (\partial_t\HE_{2,a} - \partial_t\HE_{2,b}&,\; \HE_{2,a} -
    \HE_{2,b})_{\Omega\times Y^2} + (D_2(\HE_{2,a})\nabla_y T_{2,a} -
    D_2(\HE_{2,b})\nabla_y T_{2,b},\; \nabla_y\HE_{2,a} -
    \nabla_y\HE_{2,b})_{\Omega\times Y^2}  \\ 
    &\qquad\qquad - (D(\HE_{2,a})\nabla T_{2,a}\cdot \normvec -
    D(\HE_{2,b})\nabla T_{2,b}\cdot \normvec,\;
    \HE_{1,a}-\HE_{1,b})_{\Omega\times\Gamma} = 0. 
  \end{align*}
  Adding these two equations, using $T=\omega(\HE)$, and
  adding/subtracting an extra term we obtain 
  \begin{multline*}
    \abs{Y^1}(\partial_t\HE_{1,a}-\partial_t\HE_{1,b},\;
    \HE_{1,a}-\HE_{1,b})_\Omega
    + \Pi (D(\HE_{1,a})\omega'(\HE_{1,a})\nabla \HE_{1,a} -
    D(\HE_{1,a})\omega'(\HE_{1,a})\nabla \HE_{1,b} \\ 
    + D(\HE_{1,a})\omega'(\HE_{1,a})\nabla \HE_{1,b} -
    D(\HE_{1,b})\omega'(\HE_{1,b})\nabla \HE_{1,b},\; \nabla
    \HE_{1,a}-\nabla \HE_{1,b})_\Omega \\ 
    + (\partial_t\HE_{2,a} - \partial_t\HE_{2,b},\; \HE_{2,a} -
    \HE_{2,b})_{\Omega\times Y^2}
    + (D(\HE_{2,a})\omega'(\HE_{2,a})\nabla_y \HE_{2,a}  -
    D(\HE_{2,a})\omega'(\HE_{2,a})\nabla \HE_{2,b} \\ 
    + D(\HE_{2,a})\omega'(\HE_{2,a})\nabla \HE_{2,b} -
    D(\HE_{2,b})\omega'(\HE_{2,b})\nabla_y \HE_{2,b},\;
    \nabla_y\HE_{2,a} - \nabla_y\HE_{2,b})_{\Omega\times Y^2} = 0 .
  \end{multline*}
  Then we can develop the following estimate
  \begin{align*}
    \abs{Y^1}(\partial_t\HE_{1,a} -& \partial_t\HE_{1,b},\; 
    \HE_{1,a}-\HE_{1,b})_\Omega + \Pi \min\{D\omega'\}\norm{\nabla
      \HE_{1,a} - \nabla \HE_{1,b}}^2_\Omega \\ 
    &\qquad\qquad +  (\partial_t\HE_{2,a} - \partial_t\HE_{2,b},\;
    \HE_{2,a} - \HE_{2,b})_{\Omega\times Y^2}  
    + \min\{D\omega'\}\norm{\nabla_y \HE_{2,a}  -\nabla
      \HE_{2,b}}^2_{\Omega\times Y^2} \\ 
    & = -  ((D(\HE_{1,a})\omega'(\HE_{1,a}) -
    D(\HE_{1,b})\omega'(\HE_{1,b}))\nabla \HE_{1,b},\; \nabla
    \HE_{1,a}-\nabla \HE_{1,b})_\Omega  \\ 
    &\qquad\qquad - ((D(\HE_{2,a})\omega'(\HE_{2,a}) -
    D(\HE_{2,b})\omega'(\HE_{2,b}))\nabla_y \HE_{2,b},\;
    \nabla_y\HE_{2,a} - \nabla_y\HE_{2,b})_{\Omega\times Y^2}\\ 
    &\leq \bigC_D\norm{(\HE_{1,a}-\HE_{1,b})\nabla
      \HE_{1,b}}_\Omega\norm{\nabla \HE_{1,a}-\nabla \HE_{1,b}}_\Omega\\ 
    &\qquad\qquad + \bigC_D\norm{(\HE_{2,a}-\HE_{2,b})\nabla_y
      \HE_{2,b}}_{\Omega\times
      Y^2}\norm{\nabla_y\HE_{2,a}-\nabla_y\HE_{2,b}}_{\Omega \times
      Y^2},  
  \end{align*}
  where we used that $D\omega'$ is Lipschitz continuous with constant
  $\bigC_D$. Now we continue applying the quadratic formula and integrating
  with respect to time 
  \begin{align*}
    \half\abs{Y^1}\norm{\HE_{1,a}&-\HE_{1,b}}^2_\Omega + \Pi
    \min\{D\omega'\}\norm{\nabla \HE_{1,a} - \nabla
      \HE_{1,b}}^2_{\Omega,t} \\
    &\qquad\qquad + \half\norm{\HE_{2,a} - \HE_{2,b}}^2_{\Omega\times
      Y^2} + \min\{D\omega'\}\norm{\nabla_y \HE_{2,a} -\nabla
      \HE_{2,b}}^2_{\Omega\times Y^2,t} \\
    &\leq \frac{\bigC_D \lambda}{2}\norm{(\HE_{1,a}-\HE_{1,b})\nabla
      \HE_{1,b}}^2_{\Omega,t} + \frac{\bigC_D}{2\lambda}\norm{\nabla
      \HE_{1,a}-\nabla \HE_{1,b}}^2_{\Omega,t}\\
    &\qquad\qquad +
    \frac{\bigC_D\lambda}{2}\norm{(\HE_{2,a}-\HE_{2,b})\nabla_y
      \HE_{2,b}}^2_{\Omega\times Y^2,t} + \textstyle\frac1{2\lambda}
    \bigC_D \norm{\nabla_y\HE_{2,a}-\nabla_y\HE_{2,b}}^2_{\Omega \times
      Y^2,t},
  \end{align*}
  for any $\lambda>0$, where we have taken advantage of the fact that
  the initial conditions cancel. Rearranging terms yields 
  \begin{align*}
    \half\abs{Y^1}\norm{\HE_{1,a}&-\HE_{1,b}}^2_\Omega +
    \left(\Pi \min\{D\omega'\} -
      \frac{\bigC_D}{2\lambda}\right)\norm{\nabla \HE_{1,a} - \nabla
      \HE_{1,b}}^2_{\Omega,t} \\ 
    &\qquad\qquad + \half\norm{\HE_{2,a} - \HE_{2,b}}^2_{\Omega\times 
      Y^2} + \left(\min\{D\omega'\}-
      \frac{\bigC_D}{2\lambda}\right)\norm{\nabla_y \HE_{2,a}  -\nabla
      \HE_{2,b}}^2_{\Omega\times Y^2,t} \\ 
    &\leq \frac{\bigC_D\lambda}{2}\norm{(\HE_{1,a}-\HE_{1,b})\nabla
      \HE_{1,b}}^2_{\Omega,t} +
    \frac{\bigC_D\lambda}{2}\norm{(\HE_{2,a}-\HE_{2,b})\nabla_y
      \HE_{2,b}}^2_{\Omega\times Y^2,t}.
  \end{align*}
  Then we choose $\lambda$ large enough such that on the left-hand side
  all terms are positive, after which we can apply Gronwall's Lemma to
  obtain 
  \begin{align*}
    \norm{\HE_{1,a}&-\HE_{1,b}}^2_\Omega + \norm{\nabla \HE_{1,a} -
      \nabla \HE_{1,b}}^2_{\Omega,t} + \norm{\HE_{2,a} -
      \HE_{2,b}}^2_{\Omega\times Y^2} + \norm{\nabla_y \HE_{2,a} -\nabla
      \HE_{2,b}}^2_{\Omega\times Y^2,t} \leq 0.
  \end{align*}
  Consequently, $\HE_{1,a} = \HE_{1,b}$ and $\nabla \HE_{1,a} = \nabla
  \HE_{1,b}$ almost everywhere on $\Omega\times[0,\tend]$; similarly,
  $\HE_{2,a} = \HE_{2,b}$ and $\nabla \HE_{2,a} = \nabla \HE_{2,b}$
  almost everywhere on $\Omega\times Y^2\times[0,\tend]$.
\end{proof}

\bibliography{bibliothek}

\begin{thebibliography}{10}

\bibitem{TWOSCALE1}
{\sc G.~Allaire}, {\em Homogenization and two-scale convergence}, SIAM J. Math.
  Anal., 23 (1992), pp.~1482--1518.

\bibitem{AHORN3}
{\sc T.~Am\'eglio, F.~W. Ewers, H.~Cochard, M.~Martignac, M.~Vandame, C.~Bodet,
  and P.~Cruiziat}, {\em Winter stem xylem pressure in walnut trees: {E}ffects
  of carbohydrates, cooling and freezing}, Tree Physiol., 21 (2001),
  pp.~387--394.

\bibitem{bossavit-damlamian-1981}
{\sc A.~Bossavit and A.~Damlamian}, {\em Homogenization of the {S}tefan problem
  and application to magnetic composite media}, IMA J. Appl. Math., 27 (1981),
  pp.~319--334.

\bibitem{brown-2013}
{\sc J.~E. Brown}, {\em Remaking maple: {N}ew method may revolutionize maple
  syrup industry}.
\newblock \emph{University Communications}, University of Vermont, Nov. 6,
  2013.
\newblock Available at
  \url{http://www.uvm.edu/~uvmpr/?Page=news&storyID=17209}.

\bibitem{AHORN5}
{\sc M.~Ceseri and J.~M. Stockie}, {\em A mathematical model of sap exudation
  in maple trees governed by ice melting, gas dissolution and osmosis}, SIAM J.
  Appl. Math., 73 (2013), pp.~649--676.

\bibitem{chavarriakrauser-ptashnyk-2013}
{\sc A.~Chavarr\'{\i}a-Krauser and M.~Ptashnyk}, {\em Homogenization approach
  to water transport in plant tissues with periodic microstructures}, Math.
  Model. Nat. Phenom., 8 (2013), pp.~80--111.

\bibitem{AHORN4}
{\sc D.~Cirelli, R.~Jagels, and M.~T. Tyree}, {\em Towards an improved model of
  maple sap exudation: {T}he location and role of osmotic barriers in sugar
  maple, butternut and white birch}, Tree Physiol., 28 (2008), pp.~1145--1155.

\bibitem{STEFAN1}
{\sc A.~Damlamian}, {\em How to homogenize a nonlinear diffusion equation:
  {S}tefan's problem}, SIAM J. Math. Anal., 12 (1981), pp.~306--313.

\bibitem{STEFAN3}
{\sc C.~Eck}, {\em Homogenization of a phase field model for binary mixtures},
  Multiscale Model. Simul., 3 (2004), pp.~1--27.

\bibitem{PDE14}
{\sc D.~Gilbarg and N.~Trudinger}, {\em Elliptic Partial Differential Equations
  of Second Order}, Springer, 2000.

\bibitem{graf-peter-2014}
{\sc I.~Graf and M.~A. Peter}, {\em Homogenization of fast diffusion on
  surfaces with a two-step method and an application to {T}-cell signaling},
  Nonlin. Anal. Real World Applic., 17 (2014), pp.~344--364.

\bibitem{TWOSCALE21}
{\sc I.~Graf, M.~A. Peter, and J.~Sneyd}, {\em Homogenization of a nonlinear
  multiscale model of calcium dynamics in biological cells}, Preprint des
  Instituts f\"ur Mathematik 2013--11, University of Augsburg, 2013.
\newblock Submitted for publication.

\bibitem{TWOSCALE26}
{\sc U.~Hornung}, {\em Homogenization and Porous Media}, Springer, 1997.

\bibitem{johnson-tyree-dixon-1987}
{\sc R.~W. Johnson, M.~T. Tyree, and M.~A. Dixon}, {\em A requirement for
  sucrose in xylem sap flow from dormant maple trees}, Plant Physiol., 84
  (1987), pp.~495--500.

\bibitem{kramer-boyer-1995}
{\sc P.~J. Kramer and J.~S. Boyer}, {\em The absorption of water and root and
  stem pressures}, in Water Relations of Plants and Soils, Academic Press,
  London, 1995, ch.~6, pp.~167--200.

\bibitem{PDE15}
{\sc J.~L. Lions and E.~Magenes}, {\em Non-Homogeneous Boundary Value Problems
  and Applications}, Springer, 1972.

\bibitem{TWOSCALE22}
{\sc M.~Mabrouk and S.~Hassan}, {\em Homogenization of a composite medium with
  a thermal barrier}, Math. Methods Appl. Sci., 27 (2004), pp.~405 -- 425.

\bibitem{milburn-kallarackal-1991}
{\sc J.~A. Milburn and J.~Kallarackal}, {\em Sap exudation}, in Physiology of
  Trees, A.~S. Raghavendra, ed., John Wiley \&\ Sons, New York, 1991, ch.~16,
  pp.~385--402.

\bibitem{AHORN2}
{\sc J.~A. Milburn and P.~E.~R. O'Malley}, {\em Freeze-induced sap absorption
  in \emph{Acer pseudoplatanus}: {A} possible mechanism}, Can. J. Bot., 62
  (1984), pp.~2101--2106.

\bibitem{TWOSCALE6}
{\sc M.~Neuss-Radu}, {\em Some extensions of two-scale convergence}, C. R.
  Acad. Sci. Paris, Ser. I, 322 (1996), pp.~899--904.

\bibitem{TWOSCALE2}
{\sc G.~Nguetseng}, {\em A general convergence result for a functional related
  to the theory of homogenization}, SIAM J. Math. Anal., 20 (1989),
  pp.~608--629.

\bibitem{PDE5}
{\sc R.~E. Showalter}, {\em Monotone Operators in {Banach} Space and Nonlinear
  Partial Differential Equations}, American Mathematical Society, 1997.

\bibitem{tyree-1995}
{\sc M.~T. Tyree}, {\em The mechanism of maple sap exudation}, in Tree Sap:
  Proceedings of the 1st International Symposium on Sap Utilization,
  M.~Terazawa, C.~A. McLeod, and Y.~Tamai, eds., Bifuka, Japan, April 10-12,
  1995, Hokkaido University Press, pp.~37--45.

\bibitem{STEFAN5}
{\sc A.~Visintin}, {\em Models of Phase Transition}, Springer-Verlag, 1996.

\bibitem{STEFAN2}
\leavevmode\vrule height 2pt depth -1.6pt width 23pt, {\em Homogenization of a
  doubly nonlinear {S}tefan-type problem}, SIAM J. Math. Anal., 39 (2007),
  pp.~987--1017.

\bibitem{wilmot-2011}
{\sc T.~R. Wilmot}, {\em Root pressure in trees: {A} spring phenomenon},
  Farming: Journal of Northeast Agriculture,  (2011).

\bibitem{workshop-companion}
{\em Wood botany: Hardwoods and softwoods}.
\newblock Workshop Companion.
\newblock
  \url{http://workshopcompanion.com/KnowHow/Wood/Wood_Intro/Wood_Intro.htm},
  accessed on October 30, 2014.

\end{thebibliography}
\bibliographystyle{siam}

\end{document}